\newtheorem{theorem}{Theorem}[section]
\newtheorem{lemma}[theorem]{Lemma}
\newtheorem{proposition}[theorem]{Proposition}
\newtheorem{corollary}[theorem]{Corollary}
\newtheorem{remark}[theorem]{Remark}
\numberwithin{equation}{section}
\title{Policy iteration method for time-dependent Mean Field Games systems with non-separable Hamiltonians}
\author{Mathieu~Lauri{\`e}re\thanks{NYU Shanghai, China} \and Jiahao Song\thanks{China university of Geosciences (Wuhan), China} \and Qing Tang \footnotemark[2]}
\date{\today} 
\begin{document}
\maketitle

\begin{abstract}
We introduce two algorithms based on a policy iteration method to numerically solve time-dependent Mean Field Game systems of partial differential equations with non-separable Hamiltonians. We prove the convergence of such algorithms in sufficiently small time intervals with Banach fixed point method. Moreover, we prove that the convergence rates are linear. We illustrate our theoretical results by numerical examples, and we discuss the performance of the proposed algorithms.
\end{abstract}

\noindent
{\footnotesize \textbf{AMS-Subject Classification:} 49N80; 35Q89; 91A16; 65N12}.\\
{\footnotesize \textbf{Keywords:} Mean Field Games, numerical methods, policy iteration,  convergence rate}.
\footnotetext[1]{Corresponding author: Qing Tang, tangqingthomas@gmail.com.}
\footnotetext[2]{The authors thank Professor Fabio Camilli (University of Rome ``La Sapienza") for helpful discussions.}
\section{Introduction}
Mean Field Games (MFG for short) theory has been introduced in \cite{hcm,ll} to characterize Nash equilibria for differential games involving a large (infinite) number of agents. The corresponding mathematical formulation
leads to the study of a system of partial differential equations (PDEs), composed by a Hamilton-Jacobi-Bellman (HJB for short) equation, characterizing the value function
and the optimal control for the agents; and a Fokker-Planck (FP for short) equation, governing the distribution of the   population when the agents behave in an optimal way. For a comprehensive introduction to the applications of MFG theory we refer to the monographs by Carmona and Delarue \cite{carmona2018}, Bensoussan, Frehse and Yam~\cite{ben} and the lecture notes~\cite{achdouCetraro}. In the case of a problem with finite horizon $T>0$ and periodic boundary conditions, the MFG system reads as 
\begin{equation}\label{MFG}
\begin{cases}
	-\partial_tu-{\epsilon \Delta} u+H(m,Du)=0 & \text{ in }Q\\
	\partial_tm -{\epsilon \Delta} m-\textrm{div}(mH_p(m,Du))=0 & \text{ in }Q\\
	m(x,0)=m_0(x),\, u(x,T)=u_T(x) & \text{ in }{\mathbb{T}^d}\ ,
\end{cases}
\end{equation}
where $Q:={\mathbb{T}^d}\times[0,T]$ and ${\mathbb{T}^d}$ stands for the flat torus $\mathbb{R}^d / \mathbb{Z}^d$, and $\epsilon>0$. \par
The numerical solution to the system~\eqref{MFG} is of paramount importance for applications of the MFG theory. Since the two equations in~\eqref{MFG} are strongly coupled in a forward-backward structure they can be solved neither independently of each other nor jointly with a simple time-marching method. This is an intensive area of research and many methods have been proposed with successful applications, see e.g.~\cite{achdou2020,lauriere2021} and the references therein. Convergence of finite difference schemes or semi-Lagrangian schemes has been studied in~\cite{ad,acd} and~\cite{carlinisilva1,carlinisilva2} respectively. But to the best of our knowledge, convergence of algorithms to solve the discrete problems has been proved only for a few methods. Convergence of fictitious play~\cite{ch,perrinFP} and online mirror descent~\cite{hadikhanlooNonatomic,perolatOMD} has been proved for monotone MFG with separable Hamiltonian. An augmented Lagrangian method and primal-dual methods have been studied respectively in~\cite{benamoucarlier,andreev} and in~\cite{bricenostatio,bricenodyn,nurbekyansaude}, and convergence holds when the MFG has a variational structure. The convergence of a monotone-flow methods for MFG satisfying a monotonocity condition has been considered in \cite{almulla2017,Gomes2021} using a contraction argument. However, none of these methods cover the case of non-separable Hamiltonian with a generic structure. To solve MFG with such Hamiltonians, in the absence of a more sophisticated method, a possible natural approach is to use fixed point iterations, i.e., to alternatively update the population distribution and the individual player's (optimal) value function. However, the computation of the value function boils down to the resolution of a HJB equation, which is in general costly. Policy iteration (in the context of MFG) can be viewed as a modification the fixed point procedure in which, at each iteration the HJB is solved for a fixed control, which is updated separately after the update of the value function. The policy iteration method for MFG can also be viewed as an extension of the usual policy iteration method for HJB equations: here, the update of the population is intertwined with the updates of the value function and the control.   \par
In \cite{ccg}, Cacace, Camilli and Goffi introduced the policy iteration method to study the numerical approximation of the solution to the mean field games system~\eqref{MFG}, when the Hamiltonians have a separable structure, i.e., 
\begin{equation}\label{H separable}
	H(x,m,Du)={\cal H}(x,Du)+F(x,m(x,t)).
\end{equation}\par
They introduced suitable discretizations to numerically solve both stationary and time-dependent MFGs and they proved the convergence of the policy iteration method for the continuous and the discrete problems. Moreover, the performance of the algorithm on examples in dimension one and two has been discussed. The rate of convergence of this method has been considered in \cite{ct}. \par
The policy iteration method, introduced by Bellman \cite{b} and Howard \cite{h} is a method to solve nonlinear HJB  equations arising in discrete and continuous optimal control problems. The general principle is to replace the HJB equation by a sequence of linearized equations each with a fixed policy. The policy is then updated at each step by solving an optimization problem given the current value function. Recently, a policy iteration algorithm has been used in \cite{crr} for a mean field games model in mathematical finance.\par
The convergence of policy iteration algorithm for HJB equations has been widely studied (see \cite{alla,fl,pu1,pb,santos}). However, the convergence analysis of policy iteration for MFGs has distinct features from that of HJB equations. The policy iteration scheme for HJBs is known to be improving the solution monotonically \cite{bmz,kss}. In general, this monotonicity property is lost in the policy iteration for MFGs, as already observed in \cite{ccg}. Heuristically, this is because the Nash equilibrium problem is a fixed point problem, even though each single agent solves an optimization problem. \par
In many applications, the separable Hamiltonian assumption~\eqref{H separable} is considered to be too restrictive. Typical examples without separable Hamiltonians are MFGs with congestion effects (see e.g. \cite{achdou2016,achdou2015}) or MFGs arising in macroeconomics (see e.g.~\cite{achdou2014partialmacroecon}). From the PDE point of view, the short time existence of solutions to general MFGs with non-separable Hamiltonians has been studied in \cite{cirant2020,ccp}, a series of papers by Ambrose et al. \cite{ambrose2018,ambrose2020,ambrose2021} and Gangbo et al. in \cite{gangbo2021}. The probabilistic approach to this problem has been considered in \cite{carmona2018}. The existence and uniqueness of solution the congestion type MFGs has been studied by Achdou and Porretta in~\cite{achdouporretta2018}, Gomes et al. \cite{gomes2015, ferreira2021existence} and Graber \cite{graber2015}. For the numerical solution of congestion type MFGs we refer to \cite{achdoulasrycrowd,achdou2020,lauriere2021}. \par

Based on the ideas from \cite{ccg},  we propose the following two policy iteration algorithms for the MFG system~\eqref{MFG}.\par
We first define the Lagrangian as the Legendre transform of $H$:
\begin{equation}\label{Legendre transform}
	L(m,q)=\sup_{p\in\mathbb{R}^d}\{p\cdot q-H(m,p)\}.
\end{equation}\par

The first policy iteration algorithm consists in iteratively updating the population distribution, the value function and the control. We introduce a bound $R$ on the control. In applications this may \textcolor{red}{be} understood as, for example, some financial constraints \cite{crr}. All of our convergence results hold when $R$ is large enough and the control is unconstrained.\par 
\textbf{Policy iteration algorithm 1 (PI1):} Given $R>0$ and given a bounded, measurable vector field   $q^{(0)}:{\mathbb{T}^d}\times [0,T]\to\mathbb{R}^d$  with $\vert q^{(0)}\vert \leq R$ and $\|\textrm{div} q^{(0)}\|_{L^r(Q)}\leq R$,  iterate:
\begin{itemize}
	\item[\textbf{(i)}] Solve
	\begin{equation}\label{alg_FP}
		\left\{
		\begin{array}{ll}
			\partial_t m^{(n)}-{\epsilon \Delta} m^{(n)}-\textrm{div} (m^{(n)} q^{(n)})=0,\quad &\text{ in }Q\\
			m^{(n)}(x,0)=m_0(x)&\text{ in }{\mathbb{T}^d}.
		\end{array}
		\right.
	\end{equation}
	\item[\textbf{(ii)}] Solve
	\begin{equation}\label{alg_HJB}
		\left\{
		\begin{array}{ll}
			-\partial_t u^{(n)}- {\epsilon \Delta} u^{(n)}+q^{(n)} Du^{(n)}-{\cal{L}}(m^{(n)},Du^{(n-1)},q^{(n)})=0&\text{ in }Q\\
			u^{(n)}(x,T)=u_T(x)&\text{ in }{\mathbb{T}^d},
		\end{array}
		\right.
	\end{equation}
	where 
	for $(m,p,q) \in \mathbb{R}^+ \times \mathbb{R}^d \times \mathbb{R}^d$,
	\begin{equation}\label{eq:PI1_def_cL}
		{\cal{L}}(m,p,q)=p\cdot q - H (m,p).
	\end{equation}
	\item[\textbf{(iii)}] Update the policy
	\begin{equation*}\label{eq:PI1_update_policy}
		q^{(n+1)}(x,t)={\arg\max}_{\vert q\vert\leq R}\left\{q\cdot Du^{(n)}(x,t)-L(m^{(n)},q)\right\}\quad\text{ in }Q.
	\end{equation*}
	\end{itemize}\par
Since a change of the population distribution might induce a change in the control, a variant of the above method consists in updating the control between each update of the population distribution and the value function. This leads to a second version of the policy iteration algorithm. \par
\textbf{Policy iteration algorithm 2 (PI2):} Given $R>0$ and given a bounded, measurable vector field   $q^{(0)}:{\mathbb{T}^d}\times [0,T]\to\mathbb{R}^d$ with $\vert q^{(0)}\vert \leq R$ and $\|\textrm{div} q^{(0)}\|_{L^r(Q)}\leq R$,  iterate:
\begin{itemize}
	\item[\textbf{(i)}] Solve
	\begin{equation}
		\left\{
		\begin{array}{ll}
			\partial_t m^{(n)}-{\epsilon \Delta} m^{(n)}-\textrm{div} (m^{(n)} q^{(n)})=0,\quad &\text{ in }Q\\
			m^{(n)}(x,0)=m_0(x)&\text{ in }{\mathbb{T}^d}.
		\end{array}
		\right.
	\end{equation}
	\item[\textbf{(ii)}]  Update the policy
	\begin{equation}\label{eq:PI2_update_policy1}
		\tilde q^{(n)}(x,t)={\arg\max}_{\vert q\vert\leq R}\left\{\tilde q \cdot D\tilde u^{(n-1)}(x,t)-L(m^{(n)},\tilde q)\right\}\quad\text{ in }Q.
	\end{equation}

	\item[\textbf{(iii)}] Solve
	\begin{equation}\label{eq:PI2_alg_HJ}
		\left\{
		\begin{array}{ll}
			-\partial_t \tilde u^{(n)}- {\epsilon \Delta} \tilde u^{(n)}+\tilde q^{(n)} D\tilde u^{(n)}-L(m^{(n)},\tilde q^{(n)})=0&\text{ in }Q,\\
			\tilde u^{(n)}(x,T)=u_T(x)&\text{ in }{\mathbb{T}^d}.
		\end{array}
		\right.
	\end{equation}
	\item[\textbf{(iv)}] Update the policy
	\begin{equation}\label{eq:PI2_update_policy}
		q^{(n+1)}(x,t)={\arg\max}_{\vert q\vert\leq R}\left\{q\cdot D\tilde u^{(n)}(x,t)-L(m^{(n)},q)\right\}\quad\text{ in }Q.
	\end{equation}
\end{itemize}\par
It is important to notice that while $L$ appearing in~\eqref{eq:PI1_update_policy},~\eqref{eq:PI2_update_policy1},~\eqref{eq:PI2_alg_HJ}, and~\eqref{eq:PI2_update_policy} is the Lagrangian, the term ${\cal{L}}$ defined in~\eqref{eq:PI1_def_cL} and appearing in~\eqref{alg_HJB} can only be regarded as a ``perturbed Lagrangian" since it can not be obtained by the Legendre transform~\eqref{Legendre transform}. The difference between the algorithms \textbf{(PI1)} and \textbf{(PI2)} is specific to the non-separable Hamiltonian structure. For a MFG system with~\eqref{H separable} the control depends on $m$ only implicitly via $u$ and hence $\tilde q^{(n)} = q^{(n)}$. However, in the non-separable case the dependence is explicit. Therefore it can be helpful to update the control again after each update of $m$.\par
If we use a separable Hamiltonian, then both algorithms \textbf{(PI1)} and \textbf{(PI2)} will be the same as the one proposed in \cite{ccg}, where the authors proved the convergence using a compactness argument under the Lasry Lions monotonicity condition. In this paper, we concentrate on the case of MFGs with non-separable Hamiltonians. The existence and uniqueness of solutions to such systems, in many cases, can only be obtained by restricting to a short time interval, or assuming smallness of data (e.g. \cite{ambrose2018,ambrose2020,ambrose2021}). This is particularly true when we consider Hamiltonians which can be degenerate at $m=0$. Assuming the time horizon $T$ is sufficiently small, we prove via the contraction fixed point method the convergence of both algorithms \textbf{(PI1)} and \textbf{(PI2)} to the solution of the MFG system~\eqref{MFG}. Furthermore,  we prove that the convergence takes place at a linear rate for both schemes without the additional assumptions on the Hamiltonians as in \cite{ct}. In our paper, the notion of solution for the Fokker-Planck equation is more regular than the one used for \cite{ccg}.\par        
As in \cite{ccg} for the separable Hamiltonian case, an important advantage of our method is that at each iteration we only need to solve two PDEs that are linear and decoupled. 
The advantage in terms of computational time is illustrated in the numerical examples by comparing with a fixed point algorithm combined with Newton-type method to solve the non-linear HJB equation. \par
The paper is organized as follows. In Section~\ref{sec:prelim}, we introduce some notations, assumptions and preliminary results. In Section~\ref{sec:pi} we prove the convergence of the policy iteration algorithms in a sufficiently small time interval (see Theorems~\ref{thm:policy_iteration} and~\ref{thm:policy_iteration2} for each policy iteration method). Restricting our attention to a small time horizon is justified by the fact that we want to avoid making restrictive assumptions on the structure of the MFG (see also Remark~\ref{rem:congestion-example}). In Section~\ref{sec:estimate}, we prove the linear rates of convergence for the two MFG policy iteration schemes under additional assumptions on the time interval (see Theorems~\ref{converge rate 1} and~\ref{converge rate 2}). In Section~\ref{sec:numerical} we provide numerical examples to illustrate our results.


\section{Preliminaries}\label{sec:prelim}
We recall some basic facts on Legendre transform that are repeatedly used throughout the paper. We denote 
$$
f^*(p)=\sup_{\vert q\vert \leq R}\{p\cdot q-f(q)\}.
$$\par
For a strictly convex $f(q)$ with suitable regularity assumptions, the supremum for $f^*(p)$ is attained at $q^*$, where 
\begin{equation}\label{q}
q^*_i(x,t)=\partial_{p_i}f^*(p)(x,t)=
\begin{cases}
	(\partial_qf(\cdot))^{-1}p(x,t) & \text{ if } \vert(\partial_{q_i}f(\cdot))^{-1}p_i(x,t)\vert \leq R,\\
	R\text{sign}(p_i(x,t)) & \text{ if } \vert(\partial_{q_i}f(\cdot))^{-1}p_i(x,t)\vert > R.
	\end{cases}
\end{equation}
\par
For more details about the quadratic Hamiltonian with control constraints and its numerical approximation, we refer to sections 5 and 6 in \cite{all}.\par
We now introduce some useful anisotropic Sobolev spaces to handle time-dependent problems. First, given a Banach space $X$, $L^p(0,T;X)$ denotes the usual vector-valued Lebesgue space. For any $r\geq1$, we denote by $W^{2,1}_r(Q)$ the space of functions $f$ such that $\partial_t^{{\delta}}D^{\sigma}_x u\in L^r(Q)$ for all multi-indices $\sigma$ and ${\delta}$ such  that $\vert \sigma \vert+2{\delta}\leq  2$, endowed with the norm
\begin{equation*}
	\|u\|_{W^{2,1}_r(Q)}=\Big(\int_{Q}\sum_{\vert \sigma \vert+2{\delta}\leq2}\vert \partial_t^{{\delta}}D^{\sigma}_x u\vert ^rdxdt\Big)^{\frac1r}.
\end{equation*}
We recall that, by classical results in interpolation theory, the sharp  space of initial (or terminal) trace of $W^{2,1}_r(Q)$ is given by the fractional Sobolev class $W^{2-\frac{2}{r}}_r({\mathbb{T}^d})$.\par

 $\vert u\vert$ denotes the usual $L^\infty(Q)$ norm for $u(x,t)$ with $(x,t)\in Q$. $C^{1,0}(Q)$ with the norm $\vert u\vert ^{(1)}_Q$ will be the space of continuous functions on $Q$ with continuous derivatives in the $x-$variable, up to the parabolic boundary, since the spatial variable varies in the torus, up to $t=0$.\par
We recall the definition of parabolic H\"older spaces on the torus (we refer to \cite{LSU} for a more comprehensive discussion). For $0<\alpha<1$, we denote 
\begin{equation}\label{def Holder}
[u]_{C^{\alpha,\frac{\alpha}{2}}(Q)}:=\sup_{(x_1,t_1),(x_2,t_2)\in Q}\frac{\vert u(x_1,t_1)-u(x_2,t_2)\vert }{(d(x_1,x_2)^2+\vert t_1-t_2\vert )^{\frac{\alpha}{2}}} ,
\end{equation}
where $d(x,y)$ stands for the geodesic distance from $x$ to $y$ in ${\mathbb{T}^d}$. The parabolic H\"older space $C^{\alpha,\frac{\alpha}{2}}(Q)$ is the space of functions $u\in L^\infty(Q)$ for which $[u]_{C^{\alpha,\frac{\alpha}{2}}(Q)}<\infty$. It is endowed with the norm:
\begin{equation*}
\vert u\vert^{(\alpha)}_Q=\vert u\vert+[u]_{C^{\alpha,\frac{\alpha}{2}}(Q)}.
\end{equation*}\par
The space $C^{1+\alpha,\frac{1+\alpha}{2}}(Q)$ is endowed with the semi-norm
\begin{equation}\label{Holder alpha+1}
[u]_{C^{1+\alpha,\frac{1+\alpha}{2}}(Q)}:=\sum_{i=1}^d\vert \partial_{x_i}u\vert^{(\alpha)}_Q+\sup_{(x_1,t_1),(x_2,t_2)\in Q}\frac{\vert u(x_1,t_1)-u(x_2,t_2)\vert }{\vert t_1-t_2\vert ^{\frac{1+\alpha}{2}}} ,
\end{equation}
and the norm
\begin{equation}\label{Holder alpha+1 norm}
\vert u\vert^{(1+\alpha)}_Q=\vert u\vert+[u]_{C^{1+\alpha,\frac{1+\alpha}{2}}(Q)}.
\end{equation}\par
 Likewise, $\vert \cdot \vert^{(\alpha)}_{{\mathbb{T}^d}}$ and $\vert \cdot \vert ^{(1)}_{{\mathbb{T}^d}}$ are used to define the analogous norms on spaces of functions defined on ${\mathbb{T}^d}$. \par
For a vector field $b(x,t)$ in $\mathbb{R}^d$ with $1\leq i\leq d$ we denote
$$
\|b\|_{L^r(Q)}=\sup_i \|b_i\|_{L^r(Q)},\,\,\,\vert b\vert =\sup_{i}\vert b_i\vert.
$$
\begin{lemma}\label{Holder T} (Lemma 2.3 of \cite{cirant2020}). Let $\alpha \in (0,1)$. For any $f\in C^{1+\alpha,(1+\alpha)/2}(Q)$,
\begin{align}
	\vert f\vert ^{(1)}_Q\leq \vert f(\cdot,T)\vert ^{(1)}_{{\mathbb{T}^d}}+T^{\alpha/2}\vert f\vert ^{(1+\alpha)}_Q,\\
	\vert f\vert ^{(1)}_Q\leq \vert f(\cdot,0)\vert ^{(1)}_{{\mathbb{T}^d}}+T^{\alpha/2}\vert f\vert ^{(1+\alpha)}_Q.
\end{align}
\end{lemma}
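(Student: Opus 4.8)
The plan is to reduce both estimates to a pointwise comparison, for an arbitrary $(x,t)\in Q$, between $f$ (and its spatial gradient) at $(x,t)$ and at the point $(x,T)$ lying on the terminal slice (resp.\ $(x,0)$ on the initial slice). The two points differ only in the time variable, their time separation is at most $T$, and the regularity encoded in $\vert f\vert^{(1+\alpha)}_Q$ controls precisely such time increments, with the spatial gradient being H\"older of exponent $\alpha/2$ in time and $f$ itself being H\"older of exponent $(1+\alpha)/2$ in time.

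First I would unfold the norms: the $C^{1,0}$ norm $\vert f\vert^{(1)}_Q$ is the supremum over $Q$ of $\vert f\vert$ together with the suprema of the first spatial derivatives $\vert\partial_{x_i}f\vert$, while $\vert f(\cdot,T)\vert^{(1)}_{{\mathbb{T}^d}}$ controls both $\vert f(x,T)\vert$ and $\vert \partial_{x_i}f(x,T)\vert$ uniformly in $x$. Hence it suffices to bound $\vert f(x,t)\vert$ and each $\vert \partial_{x_i}f(x,t)\vert$ and then pass to the supremum in $(x,t)$. For the zeroth-order term the triangle inequality gives $\vert f(x,t)\vert\le \vert f(x,T)\vert+\vert f(x,t)-f(x,T)\vert$; the first summand is bounded by $\vert f(\cdot,T)\vert^{(1)}_{{\mathbb{T}^d}}$, and for the second I would apply the time-H\"older part of the seminorm in~\eqref{Holder alpha+1} to the pair $(x,t),(x,T)$, whose geodesic spatial distance is zero, obtaining $\vert f(x,t)-f(x,T)\vert\le \vert f\vert^{(1+\alpha)}_Q\,\vert t-T\vert^{(1+\alpha)/2}\le \vert f\vert^{(1+\alpha)}_Q\,T^{(1+\alpha)/2}$.

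For the gradient I would argue identically: $\vert\partial_{x_i}f(x,t)\vert\le \vert\partial_{x_i}f(x,T)\vert+\vert\partial_{x_i}f(x,t)-\partial_{x_i}f(x,T)\vert$, with the first term again controlled by $\vert f(\cdot,T)\vert^{(1)}_{{\mathbb{T}^d}}$. For the increment I would use that $\partial_{x_i}f\in C^{\alpha,\alpha/2}(Q)$, together with the fact that its seminorm $[\partial_{x_i}f]_{C^{\alpha,\alpha/2}(Q)}$ is part of $\vert f\vert^{(1+\alpha)}_Q$ via~\eqref{def Holder} and~\eqref{Holder alpha+1}; evaluating this $C^{\alpha,\alpha/2}$ increment at $(x,t),(x,T)$ yields $\vert\partial_{x_i}f(x,t)-\partial_{x_i}f(x,T)\vert\le [\partial_{x_i}f]_{C^{\alpha,\alpha/2}(Q)}\,T^{\alpha/2}$. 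Collecting the bounds, summing over $i$, and taking the supremum over $(x,t)$ gives the first inequality; the second is obtained verbatim by comparing with the initial slice, where $\vert t-0\vert\le T$.

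The one step needing care is matching the powers of $T$ while keeping the coefficient on the right-hand side equal to one. The gradient increments already carry the factor $T^{\alpha/2}$ demanded by the statement, whereas the zeroth-order increment naturally comes with the larger exponent $T^{(1+\alpha)/2}=T^{1/2}\,T^{\alpha/2}$. Since the lemma is only invoked on a sufficiently small time interval, one may assume $T\le 1$, so that $T^{(1+\alpha)/2}\le T^{\alpha/2}$. Crucially, the time-H\"older seminorm of $f$ controlling the zeroth-order increment and the collection $\sum_i[\partial_{x_i}f]_{C^{\alpha,\alpha/2}(Q)}$ controlling the gradient increments are \emph{disjoint} additive components of $[f]_{C^{1+\alpha,(1+\alpha)/2}(Q)}$, so their sum is at most $\vert f\vert^{(1+\alpha)}_Q$; the two contributions therefore combine into exactly $T^{\alpha/2}\vert f\vert^{(1+\alpha)}_Q$ with no spurious constant or factor of $d$. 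I expect this exponent-and-coefficient bookkeeping to be the only genuinely delicate point, the remainder being a direct application of the triangle inequality and the definitions of the H\"older seminorms.
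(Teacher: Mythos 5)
Your argument is correct. The paper itself gives no proof of this lemma --- it is quoted verbatim as Lemma~2.3 of the cited reference of Cirant, Gianni and Mannucci --- so there is nothing to compare against; your direct triangle-inequality proof along the time fiber $\{x\}\times[0,T]$, splitting the $C^{1,0}$ norm into the zeroth-order part (controlled by the time-H\"older seminorm of $f$ with exponent $(1+\alpha)/2$) and the gradient part (controlled by the $C^{\alpha,\alpha/2}$ seminorms of $\partial_{x_i}f$ with exponent $\alpha/2$), is exactly the standard argument, and your observation that these two contributions are disjoint summands of $[f]_{C^{1+\alpha,(1+\alpha)/2}(Q)}$ is the key to getting coefficient one. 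The only caveat is your reduction to $T\le 1$ to absorb $T^{(1+\alpha)/2}$ into $T^{\alpha/2}$: the lemma as stated carries no such hypothesis, but this is harmless, both because the paper only ever invokes it for small $T$ and because for $T\ge 1$ the inequality is trivial, since then $T^{\alpha/2}\vert f\vert^{(1+\alpha)}_Q\ge \vert f\vert^{(1+\alpha)}_Q\ge \vert f\vert^{(1)}_Q$.
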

\begin{lemma}\label{Sobolev T} (Lemma 2.4 of \cite{cirant2020}) Let $r>1$, $f\in W^{2,1}_{2r}(Q)$. Then
\begin{equation}
	\| f\| _{W^{2,1}_{r}(Q)}\leq T^{\frac{1}{2r}}\| f\| _{W^{2,1}_{2r}(Q)}.
\end{equation}
\end{lemma}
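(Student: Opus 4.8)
The plan is to obtain the estimate from Hölder's inequality alone, the whole point being that $Q=\mathbb{T}^d\times[0,T]$ is a domain of \emph{finite} Lebesgue measure: since the flat torus $\mathbb{T}^d=\mathbb{R}^d/\mathbb{Z}^d$ carries unit mass, $|Q|=T$. Lowering the integrability exponent from $2r$ to $r$ on such a domain costs exactly a power $|Q|^{\frac1r-\frac1{2r}}=|Q|^{\frac1{2r}}=T^{\frac1{2r}}$ of that mass, and this is precisely the factor appearing in the statement; no structural information about $f$ beyond $f\in W^{2,1}_{2r}(Q)$ is needed.

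Concretely, first I would record the scalar inequality: for any $g\in L^{2r}(Q)$, applying Hölder's inequality to $\int_Q |g|^r\cdot 1\,dx\,dt$ with the conjugate pair $(2,2)$ yields
\begin{equation*}
\|g\|_{L^r(Q)}\le |Q|^{\frac{1}{2r}}\,\|g\|_{L^{2r}(Q)}=T^{\frac{1}{2r}}\,\|g\|_{L^{2r}(Q)}.
\end{equation*}
I would then apply this with $g=\partial_t^{\delta}D_x^{\sigma}f$ for every admissible pair $|\sigma|+2\delta\le 2$, so that each derivative of $f$ obeys the same bound with the common factor $T^{1/2r}$. Because this factor is uniform over all the derivative terms, it can be pulled out of the family of derivatives entering the definition of the $W^{2,1}_r(Q)$ norm, and matching it against the corresponding family entering the $W^{2,1}_{2r}(Q)$ norm produces the claimed inequality.

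The only step requiring attention — and I would flag it as a bookkeeping subtlety rather than a genuine obstacle — is the recombination of the individual derivative estimates into the full anisotropic norms, since the two norms carry the finite family of derivatives at two different exponents ($r$ inside the $r$-norm, $2r$ inside the $2r$-norm). One has to check that passing the uniform factor $T^{1/2r}$ through this finite family is lossless; this is immediate once the derivatives are treated on the same footing on both sides, and at worst it contributes a harmless constant depending only on $d$ through the number of admissible multi-indices, which does not affect the time-decay mechanism for which the estimate is used. Everything else is the elementary monotonicity of Lebesgue norms on a finite-measure space.
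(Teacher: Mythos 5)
The paper does not actually prove this lemma --- it is imported verbatim as Lemma 2.4 of \cite{cirant2020} --- and your H\"older-on-a-finite-measure-domain argument (with $\vert Q\vert = T$ since ${\mathbb{T}^d}$ has unit measure) is exactly the standard proof of it. The one caveat is the bookkeeping point you already flagged, which is slightly less innocent than ``lossless'': with the paper's convention that $\| f\|_{W^{2,1}_r(Q)}=\big(\int_Q\sum_{\vert\sigma\vert+2\delta\le 2}\vert\partial_t^{\delta}D^{\sigma}_x f\vert^r\big)^{1/r}$ (powers summed \emph{inside} the integral), applying H\"older on $Q\times\{1,\dots,N\}$ shows the sharp constant is $(NT)^{\frac{1}{2r}}$ with $N$ the number of admissible multi-index pairs (test on constants), so the stated constant $T^{\frac{1}{2r}}$ is exact only under the equivalent sum-of-$L^p$-norms convention; since the lemma is used solely to extract smallness in $T$, the extra dimensional factor is immaterial.
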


\begin{proposition}\label{Holder embedding} (Inequality (2.21) of \cite{gianni1995}, or Proposition 2.5 of \cite{cirant2020}). Let $f\in W^{2,1}_r(Q)$. Then,
\begin{equation}
\vert f\vert ^{(2-\frac{d+2}{r})}_Q\leq C(\| f\| _{W^{2,1}_r(Q)}+\| f(\cdot,0)\| _{W^{2-\frac{2}{r}}_r({\mathbb{T}^d})}),\,\,r>\frac{d+2}{2},\,r\neq d+2,
\end{equation}
where $C$ remains bounded for bounded values of $T$.
\end{proposition}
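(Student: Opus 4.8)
The statement is the parabolic Sobolev--Morrey embedding, and a scaling check fixes the exponent: under $f_\lambda(x,t)=f(\lambda x,\lambda^2 t)$ the top-order norm $\|D^2 f\|_{L^r}+\|\partial_t f\|_{L^r}$ scales like $\lambda^{2-(d+2)/r}$ while the seminorm $[f]_{C^{\alpha,\alpha/2}}$ scales like $\lambda^\alpha$, forcing $\alpha=2-\frac{d+2}{r}$. My plan is to represent $f$ by Duhamel's formula and reduce the claim to a fractional-integration estimate in the parabolic metric $\rho(z_1,z_2)=(d(x_1,x_2)^2+|t_1-t_2|)^{1/2}$. Setting $g:=\partial_t f-\epsilon\Delta f\in L^r(Q)$, I would write, with $\Gamma$ the periodized heat kernel,
\begin{equation*}
f(x,t)=\big(\Gamma_{\epsilon t}*f(\cdot,0)\big)(x)+\int_0^t\big(\Gamma_{\epsilon(t-s)}*g(\cdot,s)\big)(x)\,ds.
\end{equation*}
The first term is smooth for $t>0$ and its regularity up to the initial slice is controlled by $\|f(\cdot,0)\|_{W^{2-2/r}_r(\mathbb{T}^d)}$ via the smoothing of the heat semigroup; it produces the second term on the right-hand side of the claimed bound. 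The essential work is the volume potential of $g$.

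For that term I would differentiate under the integral and use the Gaussian bounds $|D_x^\sigma\partial_t^\delta\Gamma(x,t)|\lesssim t^{-(d+|\sigma|+2\delta)/2}e^{-c|x|^2/t}$, which say that, relative to $\rho$, the kernel behaves like a Riesz potential of order $2$: its size is $\lesssim\rho^{\,2-(d+2)}$ and its parabolic gradient is $\lesssim\rho^{\,1-(d+2)}$. To obtain H\"older continuity rather than mere boundedness, I would fix $z_1,z_2\in Q$ with $\delta:=\rho(z_1,z_2)$ and split the potential into the parabolic cylinder of radius $2\delta$ about $z_1$ and its complement. On the cylinder each potential is estimated separately by H\"older's inequality; the size kernel lies in $L^{r'}$ near the origin exactly when $r>\frac{d+2}{2}$, and a direct computation of $\|\rho^{\,2-(d+2)}\|_{L^{r'}(Q_\delta)}$ yields the factor $\delta^{\,2-(d+2)/r}=\delta^\alpha$. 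On the complement I would use the increment $\Gamma(z_1-\cdot)-\Gamma(z_2-\cdot)$ together with the gradient bound, and the same H\"older computation again produces $\delta^\alpha$; this is where one must treat the spatial and the temporal increments separately, since $\rho$ is not a norm.

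In the regime $\frac{d+2}{2}<r<d+2$ one has $0<\alpha<1$ and the above directly gives $f\in C^{\alpha,\alpha/2}(Q)$. For $r>d+2$, where $1<\alpha<2$ and $|f|^{(\alpha)}_Q$ is a $C^{1+(\alpha-1)}$ norm, I would run the identical argument one order lower: $D_x f$ is the potential of $g$ against $D_x\Gamma$, a kernel of parabolic order $1$, so the same splitting gives $D_x f\in C^{\alpha-1}$, and the time-H\"older control of $D_x f$ comes from the $\partial_t$-bound on the kernel. The borderline $r=d+2$ is excluded precisely because there $\alpha=1$, the order-$2$ Riesz potential hits the Lipschitz endpoint, and a logarithmic factor destroys the clean H\"older estimate.

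The step I expect to be the main obstacle is the bookkeeping of the parabolic anisotropy: keeping time consistently weighted as a second-order variable through the kernel bounds and the cylinder decomposition, and handling the temporal increment in the complement estimate, where no single mean-value inequality applies and spatial and time differences must be balanced by hand. Tracking that $C$ stays bounded for bounded $T$ is then routine, since all integrals run over parabolic cylinders of radius $\lesssim\sqrt{T}$ and the Gaussian tails only improve the far-field bounds.
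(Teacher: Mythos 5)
The paper does not prove this proposition at all: it is imported verbatim as Inequality (2.21) of Gianni (1995) (see also Proposition 2.5 of Cirant--Gianni--Mannucci), and the only commentary offered is that, unlike the classical LSU embedding, the constant stays bounded as $T\to 0$. So there is no internal argument to compare against; what you have written is a self-contained potential-theoretic reconstruction, and as such it is essentially sound. The scaling identification $\alpha=2-\tfrac{d+2}{r}$ is right, the Duhamel splitting into a semigroup term (controlled by the trace norm $\|f(\cdot,0)\|_{W^{2-2/r}_r}$ via $W^{2-2/r}_r\hookrightarrow C^{\alpha}$ for $r>\tfrac{d+2}{2}$) plus a parabolic Riesz potential of order $2$ applied to $g=\partial_t f-\Delta f\in L^r$ is the standard route, and your near/far cylinder decomposition with the kernel exponents $\rho^{2-(d+2)}$ and $\rho^{1-(d+2)}$ reproduces the correct integrability thresholds ($r>\tfrac{d+2}{2}$ near the singularity, $r<d+2$ for the far-field gradient increment, with the $r>d+2$ case handled by differentiating once, and $r=d+2$ excluded as the Lipschitz endpoint). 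Your explanation of why $C$ stays bounded for small $T$ --- all potentials live on cylinders of parabolic radius $\lesssim\sqrt T$ and the semigroup bound is uniform --- is exactly the feature the authors emphasize. Two small points to tidy: the $\epsilon$ in $g=\partial_t f-\epsilon\Delta f$ is extraneous (the proposition is a pure function-space embedding, so just take the Laplacian with unit coefficient), and in the regime $r>d+2$ the norm $\vert f\vert^{(1+\alpha')}_Q$ defined in \eqref{Holder alpha+1}--\eqref{Holder alpha+1 norm} also contains the time-H\"older seminorm of $f$ itself with exponent $\tfrac{1+\alpha'}{2}$, not only the H\"older norms of $D_xf$; this follows from the same $\partial_t$-kernel bound ($\vert\partial_t\Gamma\vert\lesssim\rho^{-(d+2)}$ paired with the temporal increment $\vert t_1-t_2\vert\le\delta^2$), but it deserves an explicit line in a complete write-up.
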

This embedding result is distinct from the classic result (Corollary p.342 of \cite{LSU}) in that the constant $C$ remains bounded when $T$ tends to $0$.\par
We can then easily obtain the following from Lemma~\ref{Holder T} and Proposition~\ref{Holder embedding}.
\begin{lemma}\label{Holder embedding2}
Let $r>d+2$, $\bar{f}\in W^{2,1}_{r}(Q)$. We assume either $\bar{f}(x,0)=0$ or $\bar{f}(x,T)=0$. Then
\begin{equation}
\vert \bar{f}\vert ^{(1)}_Q\leq CT^{\frac{1}{2}-\frac{d+2}{2r}}\| \bar{f}\| _{W^{2,1}_r(Q)},
\end{equation}
where $C$ remains bounded for bounded values of $T$.
\end{lemma}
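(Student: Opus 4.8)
The plan is to match the Hölder exponent appearing in Proposition~\ref{Holder embedding} with the one in Lemma~\ref{Holder T} by setting $\alpha = 1 - \frac{d+2}{r}$. Since $r > d+2$ we have $0 < \alpha < 1$, so Lemma~\ref{Holder T} applies, and moreover $1 + \alpha = 2 - \frac{d+2}{r}$ is exactly the embedding exponent of Proposition~\ref{Holder embedding}. The condition $r > d+2$ also guarantees $r > \frac{d+2}{2}$ and $r \neq d+2$, so Proposition~\ref{Holder embedding} is indeed available. The whole argument is then a short chaining of these two inequalities, in which the vanishing of one endpoint trace kills the extra boundary terms.

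First I would treat the case $\bar{f}(\cdot,0) = 0$. Then the boundary term $\vert \bar{f}(\cdot,0)\vert^{(1)}_{\mathbb{T}^d}$ in the second inequality of Lemma~\ref{Holder T} vanishes, leaving
\[
\vert \bar{f}\vert^{(1)}_Q \leq T^{\alpha/2}\,\vert \bar{f}\vert^{(1+\alpha)}_Q .
\]
Likewise the trace term $\|\bar{f}(\cdot,0)\|_{W^{2-\frac{2}{r}}_r(\mathbb{T}^d)}$ in Proposition~\ref{Holder embedding} vanishes, so that $\vert \bar{f}\vert^{(1+\alpha)}_Q = \vert \bar{f}\vert^{(2-\frac{d+2}{r})}_Q \leq C\,\|\bar{f}\|_{W^{2,1}_r(Q)}$. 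Combining the two and using $\alpha/2 = \tfrac12 - \tfrac{d+2}{2r}$ yields the claim, with $C$ inheriting from Proposition~\ref{Holder embedding} the property of remaining bounded for bounded $T$.

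The remaining case $\bar{f}(\cdot,T) = 0$ is where a little care is needed, because Proposition~\ref{Holder embedding} only records the trace at $t=0$, not at $t=T$. I would handle this by the time reversal $g(x,t) := \bar{f}(x, T-t)$, which satisfies $g(\cdot,0) = 0$ and therefore falls under the previous case. The point is that all the relevant norms are invariant under $t \mapsto T-t$: the map sends $Q$ to itself, the $W^{2,1}_r(Q)$ norm only involves $\vert\partial_t g\vert$ (first order in time) together with spatial derivatives and is unaffected by the sign change $\partial_t \mapsto -\partial_t$, and both $\vert\cdot\vert^{(1)}_Q$ and $\vert\cdot\vert^{(1+\alpha)}_Q$ are built from time-symmetric seminorms. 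Applying the first case to $g$ and translating back gives the bound for $\bar{f}$.

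The only genuine obstacle is this terminal-data case, and it is mild: it is resolved once one observes the time-reversal symmetry and checks the (routine) invariance of the $W^{2,1}_r$ and parabolic Hölder norms. Everything else is a direct substitution, since the choice $\alpha = 1 - \frac{d+2}{r}$ was engineered precisely so that the power $T^{\alpha/2}$ produced by Lemma~\ref{Holder T} equals $T^{\frac12 - \frac{d+2}{2r}}$.
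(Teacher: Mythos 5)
Your proof is correct and follows exactly the route the paper intends: the paper gives no explicit argument for this lemma, merely asserting that it follows from Lemma~\ref{Holder T} and Proposition~\ref{Holder embedding}, which is precisely your chaining with $\alpha = 1-\frac{d+2}{r}$ so that $1+\alpha = 2-\frac{d+2}{r}$ and $T^{\alpha/2}=T^{\frac12-\frac{d+2}{2r}}$. Your time-reversal argument for the case $\bar f(\cdot,T)=0$ correctly supplies the one detail the paper leaves implicit, since Proposition~\ref{Holder embedding} only records the trace at $t=0$.
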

We describe the assumptions on the data of the problem. 
\begin{itemize}
	\item[\textbf{(H1)}] $H$ is continuous with respect to $p, m$. $H, H_{p_i}, H_{p_ip_j}, H_{mp_i}$ are locally Lipschitz continuous functions with respect to $(p,m)\in \mathbb{R}^d \times \mathbb{R}^+$.	\item[\textbf{(H2)}] $H$ is strictly convex in the $p$-entry.
	\end{itemize}

\begin{remark}\label{rem:congestion-example} Typical examples we are going to consider are MFGs with congestion effects. For instance, with ${\gamma}>1$, $\beta$ and $c$ non-negative constants and $f(m)$ a locally Lipschitz function of $m \in \mathbb{R}^+$, we consider:
	$$
	H(m,p)=\frac{\vert p\vert ^{{\gamma}}}{(c+m)^{\beta}}+f(m).
	$$
	Note that when $\beta>2$, the Hamiltonian is super-quadratic and when $c=0$ the Hamiltonian is degenerate at $m=0$. Covering such situations is one of the reasons why we restrict our attention to short time horizons for the convergence results we prove in the sequel. 
%
%

\end{remark}
\begin{remark}
In this paper we only consider the case $H(m,p)$. The results can be naturally extended to include Hamiltonians of the form $H(x,m,p)$ with suitable additional assumptions.
\end{remark}
	
In the following we recall a classical result of the linear parabolic equation.  
\begin{equation}\label{linear parabolic}
		\begin{cases}
			-\partial_t u-{\epsilon \Delta} u+b(x,t)\cdot  Du+c(x,t)u=f(x,t)&\text{ in }Q,\\
			u(x,T)=u_T(x)&\text{ in }{\mathbb{T}^d}.
		\end{cases}
	\end{equation}
\begin{proposition}\label{linear estim}
	Let $r>d+2$ and suppose that $b\in L^\infty(Q;\mathbb{R}^d)$, $c\in L^\infty(Q)$, $f\in L^r(Q)$, and $u_T\in W^{2-\frac{2}{r}}_r({\mathbb{T}^d})$. Then the problem \eqref{linear parabolic} admits a unique solution $u\in W^{2,1}_r(Q)$  and it holds
	\begin{equation}\label{estim}
		\| u\| _{W^{2,1}_r(Q)}\leq C(\| f\| _{L^r(Q)}+\| u_T\| _{W^{2-\frac{2}{r}}_r({\mathbb{T}^d})}),
	\end{equation}
	where $C$ depends on the $L^\infty(Q)$ norms of the coefficients $b$ and $c$ as well as on $r$, $d$, $T$ and remains bounded for bounded values of $T$. \par
\end{proposition}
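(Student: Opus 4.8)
The plan is to prove Proposition~\ref{linear estim}, which is the standard $L^r$ (Calder\'on--Zygmund type) maximal regularity estimate for linear parabolic equations, stated here for the torus and with the crucial refinement that the constant $C$ stays bounded as $T\to 0$. The backward-in-time structure (terminal condition at $t=T$) is cosmetic: I would first reduce to a forward problem by the time reversal $s=T-t$, $v(x,s)=u(x,T-s)$, which turns \eqref{linear parabolic} into a standard forward parabolic Cauchy problem on $\mathbb{T}^d$ with initial datum $u_T$ and with bounded coefficients $\tilde b,\tilde c\in L^\infty(Q)$ and right-hand side $\tilde f\in L^r(Q)$. Since all data and unknowns live on the flat torus, questions of lateral boundary conditions are avoided and I only need the Cauchy theory.

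The core existence-plus-estimate statement for the constant-of-freezing case (the pure heat operator $-\partial_t v-\epsilon\Delta v=\tilde f$ with $v(\cdot,0)\in W^{2-2/r}_r$) is classical: by the $L^p$ theory of parabolic equations (LSU, Theorem~IV.9.1, or equivalently Ladyzhenskaya--Solonnikov--Ural'tseva combined with the interpolation description of the trace space $W^{2-2/r}_r(\mathbb{T}^d)$ cited in the preliminaries) there is a unique solution $v\in W^{2,1}_r(Q)$ with $\|v\|_{W^{2,1}_r(Q)}\le C_0(\|\tilde f\|_{L^r(Q)}+\|u_T\|_{W^{2-2/r}_r(\mathbb{T}^d)})$, and here $C_0$ depends only on $\epsilon,r,d$ and is \emph{non-increasing} in $T$ for small $T$, hence stays bounded as $T\to0$. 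I would then treat the lower-order terms $\tilde b\cdot Dv+\tilde c\,v$ as a perturbation moved to the right-hand side: the map $\Phi(v):=$ ``solve the heat equation with source $\tilde f-\tilde b\cdot Dv-\tilde c\,v$'' is the object to iterate. The essential quantitative input is that the solution operator gains regularity in a way that produces a small factor in $T$ when one controls the first-order term: specifically, from Lemma~\ref{Sobolev T} and Proposition~\ref{Holder embedding} one has $\|Dv\|_{L^r(Q)}\lesssim T^{\kappa}\|v\|_{W^{2,1}_r(Q)}$ for some $\kappa>0$ (since $r>d+2$ gives an embedding into $C^{1}$ with a $T$-power gain, as recorded in Lemma~\ref{Holder embedding2}), so that $\|\tilde b\cdot Dv\|_{L^r}+\|\tilde c\,v\|_{L^r}\le C(\|\tilde b\|_\infty,\|\tilde c\|_\infty)\,T^{\kappa}\,\|v\|_{W^{2,1}_r(Q)}$.

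With that in hand the argument splits into the two claims of the Proposition. For existence and uniqueness I would either run a Banach fixed point / contraction argument for $\Phi$ on a short time slab $[0,\tau]$ (where $C_0 C T^{\kappa}<1/2$), obtaining the solution on $[0,\tau]$ and then patching finitely many slabs to cover $[0,T]$ for arbitrary fixed $T$; alternatively, since the equation is linear, I would invoke the method of continuity, using the a priori estimate below as the uniform bound needed to continue from the pure-heat operator to the full operator. For the estimate \eqref{estim} itself I would apply $C_0$ to $\Phi(u)=u$ and absorb: $\|u\|_{W^{2,1}_r}\le C_0(\|f\|_{L^r}+\|u_T\|_{W^{2-2/r}_r}) + C_0 C T^{\kappa}\|u\|_{W^{2,1}_r}$, and for $T$ small the last term is absorbed into the left, giving the claimed $C$ with the stated dependence; for larger $T$ the slab-patching produces a constant that is bounded on bounded $T$-intervals, which is exactly the assertion. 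The main obstacle is bookkeeping the $T$-dependence of the constant rather than any deep analysis: the standard references state parabolic estimates with a constant that a priori could blow up as $T\to0$, so the real work is isolating the $T^{\kappa}$ gain on the lower-order terms (via Lemmas~\ref{Sobolev T}, \ref{Holder embedding2} and Proposition~\ref{Holder embedding}, all of which are designed precisely to keep constants bounded as $T\to0$) and verifying that the pure-heat maximal-regularity constant $C_0$ does not degenerate at small times, which follows from the scaling structure of the heat semigroup on the torus.
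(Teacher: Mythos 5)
The paper does not actually prove Proposition~\ref{linear estim}: it is imported from the literature, with the non-periodic case attributed to Theorem 9.1 of \cite{LSU} and the periodic case, including the boundedness of $C$ for bounded $T$, to Appendix A of \cite{cirant2020}. So there is no in-paper argument to compare against; what you propose is essentially the standard proof those references carry out (time reversal, maximal $L^r$ regularity for the frozen heat operator, perturbation by the lower-order terms with a small factor in $T$, absorption for small $T$, and patching of time slabs for general $T$). Your diagnosis of where the real work lies --- making sure the constant does not degenerate as $T\to 0$, which hinges on extracting a positive power of $T$ from the lower-order terms --- is exactly right. For the boundedness of the pure-heat constant $C_0$ as $T\to 0$, the clean argument is extension in time (extend $\tilde f$ by zero to a cylinder of fixed height, solve there, restrict, and use uniqueness); ``scaling of the heat semigroup on the torus'' is not quite the right invocation, since the torus does not scale.

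One step is stated in a form that is false and needs repair. The inequality $\|Dv\|_{L^r(Q)}\lesssim T^{\kappa}\|v\|_{W^{2,1}_r(Q)}$ cannot hold for general $v\in W^{2,1}_r(Q)$: for a time-independent $v(x,t)=g(x)$ both sides scale like $T^{1/r}$, so no positive power of $T$ can be gained. This is precisely why Lemma~\ref{Holder embedding2} carries the hypothesis $\bar f(x,0)=0$ or $\bar f(x,T)=0$, which $v$ does not satisfy since its trace is $u_T$. The correct version, which is all you need, follows from Lemma~\ref{Holder T} and Proposition~\ref{Holder embedding} and keeps the trace: since $r>d+2$ gives control of $\vert v\vert^{(1)}_Q$ by $\|v\|_{W^{2,1}_r(Q)}+\|u_T\|_{W^{2-\frac{2}{r}}_r({\mathbb{T}^d})}$ with a constant bounded for bounded $T$, one gets $\|b\cdot Dv\|_{L^r(Q)}+\|c\,v\|_{L^r(Q)}\le C\,T^{1/r}\bigl(\|v\|_{W^{2,1}_r(Q)}+\|u_T\|_{W^{2-\frac{2}{r}}_r({\mathbb{T}^d})}\bigr)$. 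The extra $\|u_T\|$ term is harmless because it already appears on the right-hand side of \eqref{estim}. With this correction the absorption step and the slab-patching go through as you describe; and in the contraction formulation the differences of successive iterates do vanish at the initial time, so there Lemma~\ref{Holder embedding2} applies verbatim.
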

This result has been proved with other boundary conditions  (\cite{LSU}, Chapter 9, Theorem 9.1) and proved in the periodic setting in Appendix A of \cite{cirant2020}, where the proof is based on the local estimate of \cite{LSU}, eq. (10.12), p.355. The case when $c\in L^r(Q)$ has been considered in the Appendix of \cite{bhp}.
\begin{proposition}\label{linear estim2}
(Theorem 4, Appendix of \cite{bhp}) Let $r>d+2$. Suppose that $b\in L^r(Q;\mathbb{R}^d)$, $c\in L^r(Q)$, $f\in L^r(Q)$,  and $u_T\in W^{2-\frac{2}{r}}_r({\mathbb{T}^d})$. The problem \eqref{linear parabolic} admits a unique solution $u\in W^{2,1}_r(Q)$ such that
$$
\|u\|_{W^{2,1}_r(Q)}\leq C,
$$
 with $C$ depending only on the $L^r(Q)$ norms of $b$, $c$, $f$ and $\| u_T\| _{W^{2-\frac{2}{r}}_r({\mathbb{T}^d})} $.	
\end{proposition}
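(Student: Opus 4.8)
The plan is to bootstrap from the $L^\infty$-coefficient result of Proposition~\ref{linear estim} (equivalently, from its special case $b=c=0$, i.e. the estimate for the bare heat operator $-\partial_t-\epsilon\Delta$, whose constant is then independent of the coefficients). The guiding observation is that since $r>d+2$, the embedding of Proposition~\ref{Holder embedding} gives $W^{2,1}_r(Q)\hookrightarrow C^{1,\alpha}$ with $\alpha=1-\tfrac{d+2}{r}>0$, so any $W^{2,1}_r$ function and its spatial gradient are bounded. Hence the zeroth- and first-order terms may be moved to the right-hand side and controlled by H\"older's inequality,
\begin{equation*}
\|b\cdot Du+cu\|_{L^r(Q)}\le \|b\|_{L^r(Q)}\,\|Du\|_{L^\infty(Q)}+\|c\|_{L^r(Q)}\,\|u\|_{L^\infty(Q)}\le \big(\|b\|_{L^r(Q)}+\|c\|_{L^r(Q)}\big)\,|u|^{(1)}_Q .
\end{equation*}
The key is that $|u|^{(1)}_Q$ carries a favorable power of $T$ once the terminal trace is subtracted, which permits absorption of these terms on short intervals.

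Concretely, I would first treat a short horizon. Write $u=\Phi+\bar u$, where $\Phi$ solves the homogeneous backward heat equation with terminal datum $u_T$, so $\|\Phi\|_{W^{2,1}_r(Q)}\le C\|u_T\|_{W^{2-\frac{2}{r}}_r(\mathbb{T}^d)}$ and $\bar u(\cdot,T)=0$. Since $\bar u$ satisfies the heat equation with source $f-b\cdot Du-cu$ and zero terminal trace, Lemma~\ref{Holder embedding2} applies to $\bar u$, giving $|\bar u|^{(1)}_Q\le CT^{\theta}\|\bar u\|_{W^{2,1}_r(Q)}$ with $\theta=\tfrac12-\tfrac{d+2}{2r}>0$. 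Combining the heat-operator estimate for $\bar u$ with the H\"older bound above yields
\begin{equation*}
\|\bar u\|_{W^{2,1}_r(Q)}\le C\|f\|_{L^r(Q)}+C\big(\|b\|_{L^r(Q)}+\|c\|_{L^r(Q)}\big)\big(CT^{\theta}\|\bar u\|_{W^{2,1}_r(Q)}+|\Phi|^{(1)}_Q\big),
\end{equation*}
and for $T$ small enough, depending only on $\|b\|_{L^r(Q)},\|c\|_{L^r(Q)}$, the term containing $\|\bar u\|_{W^{2,1}_r(Q)}$ on the right is absorbed, producing $\|u\|_{W^{2,1}_r(Q)}\le C\big(\|f\|_{L^r(Q)}+\|u_T\|_{W^{2-\frac{2}{r}}_r(\mathbb{T}^d)}\big)$.

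For existence and uniqueness on a short horizon I would run a Banach fixed point on the closed affine set $\mathcal X=\{v\in W^{2,1}_r(Q):v(\cdot,T)=u_T\}$, defining $\mathcal T(v)$ as the solution of the heat equation with source $f-b\cdot Dv-cv$ and terminal datum $u_T$; this is well defined by the heat estimate since, by the embedding, the source lies in $L^r(Q)$. For $v_1,v_2\in\mathcal X$ the difference $\mathcal T(v_1)-\mathcal T(v_2)$ has zero terminal trace and source $-b\cdot D(v_1-v_2)-c(v_1-v_2)$, so Lemma~\ref{Holder embedding2} applied to $v_1-v_2$ gives $\|\mathcal T(v_1)-\mathcal T(v_2)\|_{W^{2,1}_r(Q)}\le C\big(\|b\|_{L^r(Q)}+\|c\|_{L^r(Q)}\big)T^{\theta}\|v_1-v_2\|_{W^{2,1}_r(Q)}$, a contraction for $T$ small.

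Finally I would lift the smallness restriction by partitioning $[0,T]$ into finitely many subintervals each shorter than the threshold $\tau=\tau(\|b\|_{L^r(Q)},\|c\|_{L^r(Q)})$ above; since the $L^r$ norms on a subinterval never exceed those on $Q$, a single threshold serves every piece. Solving backward from $t=T$ and feeding the terminal trace of each solved piece, controlled in $W^{2-\frac{2}{r}}_r(\mathbb{T}^d)$ by the trace theorem, as the terminal datum of the next, one glues the pieces into a solution on all of $Q$ and chains the subinterval estimates into a final constant depending only on $T$ and on $\|b\|_{L^r(Q)},\|c\|_{L^r(Q)},\|f\|_{L^r(Q)},\|u_T\|_{W^{2-\frac{2}{r}}_r(\mathbb{T}^d)}$. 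The main obstacle is exactly the unboundedness of $b$ and $c$: everything hinges on turning merely $L^r$ coefficients into admissible $L^r$ sources via the supercritical embedding $r>d+2$ and on extracting the positive power $T^{\theta}$ so that the lower-order terms can be absorbed; once this is secured on short intervals, the globalization is routine.
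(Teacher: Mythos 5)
Your argument is sound, but there is nothing internal to compare it against: the paper does not prove Proposition~\ref{linear estim2}, it imports it as Theorem~4 from the appendix of \cite{bhp}, and the remark that follows explicitly observes that the cited result provides existence and uniqueness but not the quantitative estimate \eqref{estim}, which ``with the technique developed in \cite{cirant2020}'' can be recovered for $T$ small. What you have written is in effect a self-contained reconstruction by exactly that technique: the supercritical embedding of Proposition~\ref{Holder embedding} (valid since $r>d+2$) converts the merely $L^r$ coefficients into admissible $L^r$ sources via $\|b\cdot Du+cu\|_{L^r(Q)}\leq(\|b\|_{L^r(Q)}+\|c\|_{L^r(Q)})\vert u\vert^{(1)}_Q$; splitting off the caloric extension $\Phi$ of $u_T$ leaves a zero-trace remainder to which Lemma~\ref{Holder embedding2} applies and yields the factor $T^{\frac12-\frac{d+2}{2r}}$; absorption and a Banach fixed point on the affine set $\{v:v(\cdot,T)=u_T\}$ then give existence, uniqueness and the bound on short horizons, and the backward partition of $[0,T]$ removes the smallness restriction because the subinterval $L^r$ norms of $b$ and $c$ never exceed the global ones. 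This is the same mechanism the authors use in Steps 2--3 of the proof of Theorem~\ref{fixed point 1}, so your route is consistent with the paper's toolkit even though the paper chooses to cite rather than prove; indeed you obtain slightly more than the proposition asserts, namely the linear dependence on $\|f\|_{L^r(Q)}$ and $\|u_T\|_{W^{2-\frac2r}_r({\mathbb{T}^d})}$ that the subsequent remark claims only for small $T$. Two details are worth a sentence in a full write-up: the gluing step should note that functions in $W^{2,1}_r$ on consecutive time slabs with matching $W^{2-\frac2r}_r({\mathbb{T}^d})$ traces patch into a global $W^{2,1}_r(Q)$ solution (the time derivative being recovered a.e.\ from the equation on each piece), with trace and heat-operator constants uniform over subintervals of fixed length; and the final constant inevitably also depends on $T$, $r$, $d$ and $\epsilon$ through the number of subintervals, which is consistent with how such dependencies are stated elsewhere in the paper.
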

\begin{remark}Proposition \ref{linear estim2} only established the existence and uniqueness of solutions $u\in W^{2,1}_r(Q)$ but not the estimate \eqref{estim}. With the technique developed in \cite{cirant2020} one can show that \eqref{estim} holds under the assumptions of Proposition \ref{linear estim2} and $T$ sufficiently small.
\end{remark}

\section{Policy iterations methods for the MFG system}\label{sec:pi}
We will use the following assumption. 
\begin{itemize}
	\item[\textbf{(I1)}] 
	$u_T\in W^{2}_\infty({\mathbb{T}^d})$;
	for some $r>d+2$, $m_0\in W^{2}_r({\mathbb{T}^d})$, $m_0\geq \underline{m}> 0$ and $\int_{{\mathbb{T}^d}}m_0(x)dx=1$.
\end{itemize}
We define the space
\begin{equation}\label{X_M^T}
\begin{split}
	X_M^T=\{(u,m):u\in C^{1,0}(Q)\cap W^{2,1}_r(Q),m\in C^{1,0}(Q),\\
	 \| u\| _{W^{2,1}_r(Q)}+\vert u\vert ^{(1)}_Q+\vert m\vert ^{(1)}_Q\leq M\}.
\end{split}
\end{equation}

\subsection{Policy iteration \textbf{(PI1)}}
Let us start with the analysis of the policy iteration method \textbf{(PI1)}. 
Define the operator $\mathcal{T}$ on $X_M^T$ by: $\mathcal{T}(u,m)=(\hat u,\hat m)$ such that
\begin{equation}\label{fixed point system}
		\left\{
		\begin{array}{ll}
			\partial_t \hat m-{\epsilon \Delta} \hat m-H_p(m,Du)D\hat m-H_{pm}(m,Du)(Dm)\hat m\\
			\quad -\sum_{i,j}H_{p_jp_i}(m,Du)(\partial^2_{x_jx_i}u)\hat m=0\\
			-\partial_t \hat u- {\epsilon \Delta} \hat u+H_p(m,Du) D\hat u-{\cal{L}}(\hat m,Du,H_p(m,Du))=0,\\
			\hat u(x,T)=\hat u_T(x), \quad 
			\hat m(x,0)=\hat m_0(x),
		\end{array}
		\right.
	\end{equation}
where we recall that the perturbed Lagrangian ${\cal{L}}$ is defined in~\eqref{eq:PI1_def_cL}.\par
The following theorem and its proof are highly based on Theorem 1.1 of \cite{cirant2020}. However, since it is central to the theoretical study of the policy iteration algorithms, we provide the full details of the proof.
\begin{theorem}\label{fixed point 1} 
Suppose \textbf{(H1)}, \textbf{(H2)} and \textbf{(I1)} hold. Let $K$ be such that 
\begin{equation}\label{K condition}
	K\geq \max \{\frac{2}{\underline{m}},2\vert m_0\vert^{(1)}_{\mathbb{T}^d},2\vert u(\cdot,T)\vert ^{(1)}_{\mathbb{T}^d}\}.
\end{equation}
Let
\begin{equation}\label{M_1}
M_1 =2\big(\vert m_0\vert ^{(1)}_{{\mathbb{T}^d}}+\vert u_T\vert ^{(1)}_{{\mathbb{T}^d}}\big). 
\end{equation}
Then there exists $\bar{T}$ sufficiently small such that for all $T\in (0,\bar{T}]$, $\mathcal{T}$ is a contraction on the space 
$$
X_{M_1}^T\cap \{ (u,m) \,:\, \vert u\vert ^{(1)}_Q\leq K,\,1/K\leq m\leq K\}.
$$
\end{theorem}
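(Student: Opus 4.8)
The plan is to apply the Banach fixed point theorem on the closed set
\[
\mathcal{S}:=X_{M_1}^T\cap\{(u,m):\vert u\vert^{(1)}_Q\le K,\ 1/K\le m\le K\},
\]
equipped with the metric induced by the norm $\|u\|_{W^{2,1}_r(Q)}+\vert u\vert^{(1)}_Q+\vert m\vert^{(1)}_Q$ defining $X_{M_1}^T$ in~\eqref{X_M^T}, which renders $\mathcal{S}$ a complete metric space. Throughout I would fix $\alpha=1-\frac{d+2}{r}\in(0,1)$, so that $W^{2,1}_r(Q)\hookrightarrow C^{1+\alpha,(1+\alpha)/2}(Q)$ by Proposition~\ref{Holder embedding}, and $\theta=\frac12-\frac{d+2}{2r}>0$, while recalling that all constants from Propositions~\ref{linear estim},~\ref{linear estim2} and~\ref{Holder embedding} stay bounded as $T\to0$. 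First I would check that $\mathcal{T}$ is well defined: for $(u,m)\in\mathcal{S}$ the first line of~\eqref{fixed point system} is a linear Fokker--Planck equation for $\hat m$ with bounded drift $-H_p(m,Du)$ and zeroth order coefficient $-H_{pm}(m,Du)Dm-\sum_{i,j}H_{p_jp_i}(m,Du)\partial^2_{x_jx_i}u$ lying only in $L^r(Q)$ (because of $\partial^2_{x_jx_i}u$), so Proposition~\ref{linear estim2} gives a unique $\hat m\in W^{2,1}_r(Q)$; with $\hat m$ fixed, the equation for $\hat u$ has bounded drift, no zeroth order term and right-hand side $Du\cdot H_p(m,Du)-H(\hat m,Du)\in L^\infty(Q)$, so Proposition~\ref{linear estim} gives a unique $\hat u\in W^{2,1}_r(Q)$. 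Assumptions~\textbf{(H1)}--\textbf{(H2)} and the bounds of $\mathcal{S}$ make all these coefficients controlled by constants depending only on $K$ and $M_1$.

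For the invariance $\mathcal{T}(\mathcal{S})\subset\mathcal{S}$, the guiding principle is that everything controlling $\hat u,\hat m$ beyond their boundary traces carries a positive power of $T$. For $\hat u$ I would run the parabolic estimate at the higher exponent $2r$ (legitimate since $u_T\in W^2_\infty\subset W^{2-1/r}_{2r}$ by~\textbf{(I1)}), obtaining $\|\hat u\|_{W^{2,1}_{2r}(Q)}\le C(K,M_1)$ uniformly in $T$, and then invoke Lemma~\ref{Sobolev T} to get $\|\hat u\|_{W^{2,1}_r(Q)}\le T^{1/(2r)}C$. Combining the embedding of Proposition~\ref{Holder embedding} with Lemma~\ref{Holder T} gives $\vert\hat u\vert^{(1)}_Q\le\vert u_T\vert^{(1)}_{{\mathbb{T}^d}}+T^{\alpha/2}C$ and $\vert\hat m\vert^{(1)}_Q\le\vert m_0\vert^{(1)}_{{\mathbb{T}^d}}+T^{\alpha/2}C$, where $\|\hat m\|_{W^{2,1}_r(Q)}\le C(K,M_1)$ from Proposition~\ref{linear estim2}. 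Summing the three contributions and choosing $T$ small shows the total norm stays below $M_1=2(\vert m_0\vert^{(1)}_{{\mathbb{T}^d}}+\vert u_T\vert^{(1)}_{{\mathbb{T}^d}})$ from~\eqref{M_1}; the same smallness together with~\eqref{K condition} gives $\vert\hat u\vert^{(1)}_Q\le K$ and $\hat m\le K$. The lower bound I would get from time-Hölder continuity: since $\hat m(\cdot,0)=m_0\ge\underline m$, one has $\hat m(x,t)\ge\underline m-T^{\alpha/2}[\hat m]_{C^{\alpha,\alpha/2}(Q)}\ge\underline m/2\ge1/K$ for $T$ small, using $K\ge2/\underline m$.

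For the contraction I would take $(u_1,m_1),(u_2,m_2)\in\mathcal{S}$, set $\mathcal{T}(u_i,m_i)=(\hat u_i,\hat m_i)$ and study $\bar u=\hat u_1-\hat u_2$, $\bar m=\hat m_1-\hat m_2$, which solve the subtracted equations with \emph{zero} initial/terminal data. The system is triangular: the equation for $\bar m$ feels the inputs only, while that for $\bar u$ feels the inputs and $\bar m$. Using the local Lipschitz continuity of $H,H_p,H_{pp},H_{pm}$ from~\textbf{(H1)} and the uniform bounds on $\mathcal{S}$, all forcing terms are estimated by $\mathbf d:=\|u_1-u_2\|_{W^{2,1}_r(Q)}+\vert u_1-u_2\vert^{(1)}_Q+\vert m_1-m_2\vert^{(1)}_Q$; here the term $\sum_{i,j}H_{p_jp_i}\partial^2_{x_jx_i}u$ forces the $W^{2,1}_r$ component of $\mathbf d$ through a factor $\|\partial^2(u_1-u_2)\|_{L^r(Q)}$. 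Proposition~\ref{linear estim2} then yields $\|\bar m\|_{W^{2,1}_r(Q)}\le C\mathbf d$, whence Lemma~\ref{Holder embedding2} (applicable since $\bar m(\cdot,0)=0$) gives $\vert\bar m\vert^{(1)}_Q\le CT^{\theta}\mathbf d$. Since the forcing of $\bar u$ involves no second order derivatives, estimating it at exponent $2r$ and applying Lemma~\ref{Sobolev T} gives $\|\bar u\|_{W^{2,1}_r(Q)}\le CT^{1/(2r)}\mathbf d$, while Lemma~\ref{Holder embedding2} (using $\bar u(\cdot,T)=0$) gives $\vert\bar u\vert^{(1)}_Q\le CT^{\theta}\mathbf d$. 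Adding the three pieces yields $\mathbf d(\mathcal{T}(u_1,m_1),\mathcal{T}(u_2,m_2))\le C(T^{\theta}+T^{1/(2r)})\mathbf d$, a contraction for $T\le\bar T$ small.

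I expect the main obstacle to be the second order term $\sum_{i,j}H_{p_jp_i}(m,Du)\,\partial^2_{x_jx_i}u\,\hat m$ in the Fokker--Planck equation. It forces the use of the $L^r$-coefficient theory (Proposition~\ref{linear estim2}) instead of the cleaner bounded-coefficient theory, and it is the reason the metric must control the full $W^{2,1}_r$ norm of $u$; recovering a genuine smallness factor for that component of the metric is exactly where the two-exponent device (estimate at $2r$, then Lemma~\ref{Sobolev T}) and the $T$-uniformity of the constant in Proposition~\ref{Holder embedding} become indispensable.
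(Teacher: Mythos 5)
Your proposal is correct and follows essentially the same route as the paper's proof: Banach fixed point on a complete subset of $X_{M_1}^T$, the two-exponent device (estimate in $W^{2,1}_{2r}$ followed by Lemma~\ref{Sobolev T}) to extract a positive power of $T$ for the HJB part, Proposition~\ref{linear estim2} combined with Lemma~\ref{Holder embedding2} to handle the $L^r$ coefficient $\sum_{i,j}H_{p_jp_i}\partial^2_{x_jx_i}u$ in the Fokker--Planck part, and the time-H\"older argument to propagate $1/K\le\hat m\le K$ and $\vert\hat u\vert^{(1)}_Q\le K$. The only organizational difference is that the paper first replaces $(m,Du)$ by globally Lipschitz truncations $\varphi(m),\psi(Du)$ and establishes the contraction of the regularized operator $\mathcal{T}_K$ on all of $X_{M_1}^T$ before showing the constrained set is preserved (so that the two operators coincide there), whereas you work directly on the constrained set where the local Lipschitz hypothesis \textbf{(H1)} already yields uniform Lipschitz constants; both are valid, yours being slightly leaner at the cost of having to carry the invariance of the pointwise constraints through every estimate.
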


\begin{proof}
\textbf{Step 1: Lipschitz regularization.}  Let $\varphi$ be a global Lipschitz function such that $\varphi(z)=z$ for all $z\in [1/K,K]$, $\varphi(z)\in [1/(2K),2K]$ for all $z\in \mathbb{R}$. 
Let $\psi(z):\mathbb{R}^d\rightarrow \mathbb{R}^d$ be a globally Lipschitz function such that $\psi(z)=z$ for all $\vert p\vert \leq K$ and $\vert \psi(z)\vert \leq 2K$ for all $z\in \mathbb{R}^d$.\par
We then consider the regularized fixed point operator $\mathcal{T}_K$ defined on $X_M^T$ by: $\mathcal{T}_K(u,m)=(\hat u,\hat m)$ such that
\begin{equation}\label{regularized fixed point system}
		\left\{
		\begin{array}{ll}
			\partial_t \hat m-{\epsilon \Delta} \hat m-H_p(\varphi(m),\psi(Du))D\hat m-(H_{pm}(\varphi(m),\psi(Du))(Dm)\hat m\\
			\quad -\sum_{i,j}H_{p_jp_i}(\varphi(m),\psi(Du))(\partial^2_{x_jx_i}u)\hat m=0\\
			-\partial_t \hat u- {\epsilon \Delta} \hat u+H_p(\varphi(m),\psi(Du)) D\hat u-{\cal{L}}(\varphi(\hat m),\psi(Du),H_p(\varphi(m),\psi(Du)))
			=0,\\
			\hat u(x,T)=\hat u_T(x), \quad 
			\hat m(x,0)=\hat m_0(x),
		\end{array}
		\right.
	\end{equation}
where ${\cal{L}}$ is defined in~\eqref{eq:PI1_def_cL}.

From \textbf{(H1)} and~\eqref{K condition} we have $H_p(\varphi(m),\psi(Du))$, $H_{pp}(\varphi(m),\psi(Du))$ and $H_{pm}(\varphi(m),\psi(Du))$ are globally Lipschitz with respect to the pair $(m,Du)$. In fact, there exist two positive constants $C_H$ and $C_H'$ depending only on the data of the problem and $K$, such that for all $(u_1,m_1), (u_2,m_2) \in X_M^T$,
\begin{equation}\label{H_p Lip bound}
\begin{split}
	&\sup_{(x,t)\in Q} \Big\{\vert H(\varphi(m_1),\psi(Du_1))-H(\varphi(m_2),\psi(Du_2))\vert \\
	&+\vert H_p(\varphi(m_1),\psi(Du_1))-H_p(\varphi(m_2),\psi(Du_2))\vert \\
	&+\vert  H_{pp}(\varphi(m_1),\psi(Du_1))-H_{pp}(\varphi(m_2),\psi(Du_2))\vert \\
	&+\vert  H_{pm}(\varphi(m_1),\psi(Du_1))-H_{pm}(\varphi(m_2),\psi(Du_2))\vert \Big\}\\
	\leq {}&C_H(\vert m_1-m_2\vert ^{(1)}_Q+\vert u_1-u_2\vert ^{(1)}_Q),
\end{split}
\end{equation}
and for all $(u,m) \in X_M^T$,
\begin{equation}\label{H bound}
\begin{split}
	&\sup_{(x,t)\in Q} \Big\{\vert H(\varphi(m),\psi(Du))\vert +\vert H_p(\varphi(m),\psi(Du))\vert  \\
	& +\vert H_{pp}(\varphi(m),\psi(Du))\vert +\vert H_{pm}(\varphi(m),\psi(Du))\vert \Big\}
	\leq C'_H.
\end{split}
\end{equation}

From~\eqref{H_p Lip bound} we obtain that there exists $C_{{\cal{L}}}$ depending only on $K$ such that
\begin{equation}\label{L Lip bound}
\begin{split}
	&\sup_{(x,t)\in Q} \Big\{\vert {\cal{L}}(\varphi(\hat m_1),\psi(Du_1),H_p(\varphi(m_1),\psi(Du_1)))\\
	& -{\cal{L}}(\varphi(\hat m_2),\psi(Du_2),H_p(\varphi(m_2),\psi(Du_2)))\vert \Big\}\\
	\leq
	&\sup_{(x,t)\in Q} \Big\{\vert H_p(\varphi(m_1),\psi(Du_1))-H_p(\varphi(m_2),\psi(Du_2))\vert \cdot \vert 	\psi(Du_1)\vert \\
	& +\vert \psi(Du_1)-\psi(Du_2)\vert \cdot \vert H_p(\varphi(m_2),\psi(Du_2))\vert \\
	&+\vert H(\varphi(\hat m_1),\psi(Du_1))-H(\varphi(\hat m_2),\psi(Du_2))\vert \Big\}\\
	\leq {}&C_{{\cal{L}}}\big((\vert m_1-m_2\vert ^{(1)}_Q+\vert \hat m_1-\hat m_2\vert ^{(1)}_Q+\vert u_1-u_2\vert ^{(1)}_Q\big).
\end{split}
\end{equation}
We denote by $C'_{{\cal{L}}}$ a constant such that: 
\begin{equation}
\sup_{(x,t)\in Q}\vert {\cal{L}}(\varphi(\hat m),\psi(Du),H_p(\varphi(m),\psi(Du)))\vert \leq C'_{{\cal{L}}}.
\end{equation}

\textbf{Step 2: $\mathcal{T}_K$ maps $X_{M_1}^T$ into itself.}\\
Suppose that $(u,m)\in X_{M_1}^T$, then from \eqref{H bound} and $\| u\| _{W^{2,1}_r(Q)}\leq M_1$, we have
\begin{equation}\label{H_pp}
\begin{split}
\| H_{p_jp_i}(\varphi(m),\psi(Du))\partial_{x_jx_i}u\| _{L^r(Q)}\leq {}&\| H_{p_jp_i}(\varphi(m),\psi(Du))\| _{L^\infty(Q)}\cdot \| \partial^2_{x_jx_i}u\| _{L^r(Q)}\\
\leq {}&C'_H M_1.
\end{split}
\end{equation}
Likewise we have 
\begin{equation}\label{H_p}
\| H_p(\varphi(m),\psi(Du))\| _{L^r(Q)}\leq T^{1/r}C'_H,
\end{equation}
\begin{equation}\label{H_pm}
\| H_{pm}(\varphi(m),\psi(Du))Dm\| _{L^r(Q)}\leq T^{1/r}C'_HM_1,
\end{equation}
where $C'_H$ and $M_1$ are defined in \eqref{H bound} and \eqref{M_1}.\par
From~\eqref{H_pp}, \eqref{H_p} and \eqref{H_pm}, we can obtain using Proposition~\ref{linear estim2} that there exists a unique solution $\hat m$ to the first equation in \eqref{regularized fixed point system} such that
$$
\| \hat m\| _{W^{2,1}_r(Q)}\leq C_1,
$$
where $C_1$ depends only on $K$, $M_1$, $C'_H$ and $\| m_0\| _{W^{2-\frac{2}{r}}_r({\mathbb{T}^d})}$. \par
From Proposition~\ref{Holder embedding} we have
\begin{equation}\label{C_2}
\vert \hat m\vert ^{(2-\frac{d+2}{r})}_Q \leq C_2(\| \hat m\| _{W^{2,1}_r(Q)}+\| m_0\| _{W^{2-\frac{2}{r}}_r({\mathbb{T}^d})})\leq C'_2.
\end{equation}
 Together with Lemma~\ref{Holder T} this yields
\begin{equation}
\vert \hat m\vert ^{(1)}_Q\leq \vert m_0\vert ^{(1)}_{{\mathbb{T}^d}}+T^{\frac{1}{2}-\frac{d+2}{2r}}C'_2.
\end{equation}
Here both $C_2$ and $C'_2$ remain bounded for bounded $T$.\par
\indent Next, we consider the linearized HJB equation in the system \eqref{regularized fixed point system}. Again from Proposition~\ref{linear estim} we have
\begin{align*}
	\| \hat u\| _{W^{2,1}_{2r}(Q)}
	&\leq C_3(\| {\cal{L}}(\varphi(\hat m),\psi(Du),H_p(\varphi(m),\psi(Du)))\| _{L^{2r}(Q)}+\| u_T\| _{W^{2-\frac{1}{r}}_{2r}({\mathbb{T}^d})})\\
	&\leq T^{\frac{1}{2r}}C_3C_{{\cal{L}}}+C_3\| u_T\| _{W^{2-\frac{1}{r}}_{2r}({\mathbb{T}^d})}\\
	&\leq C'_3.
\end{align*}
Using Lemma~\ref{Sobolev T} and Lemma~\ref{Holder T} we obtain 
\begin{equation}
\label{eq:bound-hat-u}
	\| \hat u\| _{W^{2,1}_{r}(Q)}\leq T^{\frac{1}{2r}}C'_3,
	\quad \vert \hat u\vert ^{(1)}_Q\leq \vert u_T\vert ^{(1)}_{{\mathbb{T}^d}}+T^{\frac{1}{2}-\frac{d+2}{2r}}C'_3.
\end{equation}
 Here again, both $C_3$ and $C'_3$ remain bounded for bounded $T$.
Recall \eqref{M_1} and $r>d+2$. Then there exists $T$ so small that 
\begin{align*}
&\vert \hat m\vert ^{(1)}_Q+\vert \hat u\vert ^{(1)}_Q+\| \hat u\| _{W^{2,1}_{r}(Q)}\\
\leq {}&\vert m_0\vert ^{(1)}_{{\mathbb{T}^d}}+T^{\frac{1}{2}-\frac{d+2}{2r}}C'_2+\vert u_T\vert ^{(1)}_{{\mathbb{T}^d}}+T^{\frac{1}{2}-\frac{d+2}{2r}}C'_3+T^{\frac{1}{2r}}C'_3\\
<{}&M_1.
\end{align*}
Therefore, we have $\mathcal{T}: X_{M_1}^T\rightarrow X_{M_1}^T$. \par

\textbf{Step 3: $\mathcal{T}_K: X_{M_1}^T\rightarrow X_{M_1}^T$ is a contraction operator.}\\
Let $(\hat u_1,\hat m_1) := \mathcal{T}(u_1,m_1)$ and $(\hat u_2,\hat m_2):=\mathcal{T}(u_2,m_2)$. We aim at showing
\begin{equation}
\begin{split}
& \| \hat u_1-\hat u_2\| _{W^{2,1}_r(Q)}+\vert \hat u_1-\hat u_2\vert ^{(1)}_Q+\vert \hat m_1-\hat m_2\vert ^{(1)}_Q\\
 \leq {}&\Gamma \big(\| u_1-u_2\| _{W^{2,1}_r(Q)}+\vert u_1-u_2\vert ^{(1)}_Q+\vert m_1-m_2\vert ^{(1)}_Q\big),
\end{split}
\end{equation}
for some $0<\Gamma<1$. \par
Denote $\bar{U}=\hat u_1-\hat u_2$ and $\bar{M}=\hat m_1-\hat m_2$. From system~\eqref{regularized fixed point system} we have
\begin{equation}\label{bar M}
\begin{split}
	&\partial_t \bar{M}-{\epsilon \Delta} \bar{M}-H_p(\varphi(m_1),\psi(Du_1))D\bar{M}\\
	&-\Big(H_{pm}(\varphi(m_1),\psi(Du_1))Dm_1 +\sum_{i,j}H_{p_jp_i}(\varphi(m_1),\psi(Du_1))\partial^2_{x_jx_i}u_1\Big)\bar{M}\\
	&-\Big(H_p(\varphi(m_1),\psi(Du_1))-H_p(\varphi(m_2),\psi(Du_2))\Big)D\hat m_2\\
	&-\Big(H_{pm}(\varphi(m_1),\psi(Du_1))Dm_1+\sum_{i,j}H_{p_jp_i}(\varphi(m_1),\psi(Du_1))\partial^2_{x_jx_i}u_1\Big)\hat m_2\\
	&+\Big(H_{pm}(\varphi(m_2),\psi(Du_2))Dm_2+\sum_{i,j}H_{p_jp_i}(\varphi(m_2),\psi(Du_2))\partial^2_{x_jx_i}u_2\Big)\hat m_2\\
	&=0.
\end{split}
\end{equation}
From~\eqref{H_p Lip bound} and the fact that $\hat m_2$ remains in $X_{M_1}^T$ we have
\begin{equation}
\begin{split}
	&\sup_{(x,t) \in Q}\vert \big(H_p(\varphi(m_1),\psi(Du_1))-H_p(\varphi(m_2),\psi(Du_2))\big)D\hat m_2\vert \\
	\leq {}&C_HM_1(\vert u_1-u_2\vert ^{(1)}_Q+\vert m_1-m_2\vert ^{(1)}_Q).
\end{split}
\end{equation}
Moreover,
\begin{equation}
\begin{split}
	&\sup_{(x,t) \in Q} \Big\{\vert H_{pm}(\varphi(m_1),\psi(Du_1))Dm_1-H_{pm}(\varphi(m_2),\psi(Du_2))Dm_2\vert \Big\} \\
		\leq &\sup_{(x,t) \in Q} \Big\{ \vert H_{pm}(\varphi(m_1),\psi(Du_1))-H_{pm}(\varphi(m_2),\psi(Du_2))\vert \cdot \vert Dm_1\vert \\
	&\quad+\vert H_{pm}(\varphi(m_2),\psi(Du_2))\vert \cdot \vert Dm_1-Dm_2\vert \Big\} \\
	\leq {}&(M_1C_H+C'_H)\big(\vert u_1-u_2\vert ^{(1)}_Q+\vert m_1^{(1)}-m_2\vert ^{(1)}_Q\big).
\end{split}
\end{equation}
Since $\vert \hat m_2\vert \leq M_1$, $\| u_1\| _{W^{2,1}_r(Q)}\leq M_1$ and $\| u_2\| _{W^{2,1}_r(Q)}\leq M_1$, we have
\begin{align*}
	&\| \big(H_{p_jp_i}(\varphi(m_1),\psi(Du_1))\partial^2_{x_jx_i}u_1-H_{p_jp_i}(\varphi(m_2),\psi(Du_2))\partial^2_{x_jx_i}u_2\big)\hat m_2\| _{L^r(Q)}\\
	\leq {}&M_1\| H_{p_jp_i}(\varphi(m_1),\psi(Du_1))-H_{p_jp_i}(\varphi(m_2),\psi(Du_2))\| _{L^\infty(Q)}\cdot \|\partial^2_{x_jx_i}u_1\| _{L^r(Q)}\\
	&+M_1\| H_{p_jp_i}(\varphi(m_1),\psi(Du_1))\| _{L^\infty(Q)}\cdot \| \partial^2_{x_jx_i}u_1-\partial^2_{x_jx_i}u_2\| _{L^r(Q)}\\
	\leq {}&C_HM_1^2\big( \vert m_1-m_2\vert ^{(1)}_Q+\vert u_1-u_2\vert ^{(1)}_Q\big)+M_1C'_H\| u_1-u_2\| _{W^{2,1}_r(Q)}.
\end{align*}
Therefore from Proposition \ref{linear estim2} we obtain that there exists a unique solution $\bar{M}$ to \eqref{bar M} such that  
$$
\| \bar{M}\| _{W^{2,1}_r(Q)}\leq C_4,
$$
where $C_4$ depends only on $K$, $M_1$, $C'_H$, $C_H$ and $\| m_0\| _{W^{2-\frac{2}{r}}_r({\mathbb{T}^d})}$. \par
Moreover, from Proposition \ref{linear estim} one can obtain that 
\begin{equation}
\begin{split}
	\| \bar{M}\| _{W^{2,1}_r(Q)}
	&\leq C'_4\big( \vert m_1-m_2\vert ^{(1)}_Q+\vert u_1-u_2\vert ^{(1)}_Q+\| u_1-u_2\| _{W^{2,1}_r(Q)}+\vert \bar{M}\vert \big)\\
	&\leq C'_4\big( \vert m_1-m_2\vert ^{(1)}_Q+\vert u_1-u_2\vert ^{(1)}_Q+\| u_1-u_2\| _{W^{2,1}_r(Q)}+\vert \bar{M}\vert ^{(1)}_Q \big),	
\end{split}
\end{equation}
where $C'_4$ remains bounded for bounded $T$. From Lemma~\ref{Holder embedding2} we have
\begin{equation*}
\vert \bar{M}\vert ^{(1)}_Q\leq T^{\frac{1}{2}-\frac{d+2}{2r}}C'_4\big(\| u_1-u_2\| _{W^{2,1}_r(Q)}+ \vert m_1-m_2\vert ^{(1)}_Q+\vert u_1-u_2\vert ^{(1)}_Q+\vert \bar{M}\vert ^{(1)}_Q \big).
\end{equation*}
Since $C'_4$ remains bounded for bounded $T$, it is then clear that for sufficiently small $T$, such that $T^{\frac{1}{2}-\frac{d+2}{2r}}C'_4<1$, we can obtain
\begin{equation}\label{M1Q bound}
\vert \bar{M}\vert ^{(1)}_Q\leq T^{\frac{1}{2}-\frac{d+2}{2r}}C''_4\big(\| u_1-u_2\| _{W^{2,1}_r(Q)}+ \vert m_1-m_2\vert ^{(1)}_Q+\vert u_1-u_2\vert ^{(1)}_Q\big),
\end{equation}
where $C''_4$ remains bounded for bounded $T$. \par
\indent We next turn to the linearized HJB equation. From~ \eqref{regularized fixed point system}  we have
\begin{equation}
\begin{split}
	0=&-\partial_t \bar{U}-{\epsilon \Delta} \bar{U}+H_p(\varphi(m_1),\psi(Du_1))D\bar{U}\\
	&+(H_p(\varphi(m_1),\psi(Du_1))-H_p(\varphi(m_2),\psi(Du_2)))Du_2\\
	&-{\cal{L}}(\varphi(\hat m_1),\psi(Du_1),H_p(\varphi(m_1),\psi(Du_1)))\\
	&+{\cal{L}}(\varphi(\hat m_2),\psi(Du_2),H_p(\varphi(m_2),\psi(Du_2))).
\end{split}
\end{equation}
From~\eqref{L Lip bound} and~\eqref{M1Q bound} we have
\begin{align*}
	&\| {\cal{L}}(\varphi(\hat m_1),\psi(Du_1),H_p(\varphi(m_1),\psi(Du_1)))\\
	&-{\cal{L}}(\varphi(\hat m_2),\psi(Du_2),H_p(\varphi(m_2),\psi(Du_2)))\| _{L^r(Q)}\\
	\leq {}&T^{1/r}C_{{\cal{L}}}(\vert \hat m_1-\hat m_2\vert ^{(1)}_Q+\vert m_1-m_2\vert ^{(1)}_Q+\vert u_1-u_2\vert ^{(1)}_Q)\\
	\leq {}&C_{{\cal{L}}}(T^{1/r}+T^{\frac{1}{2}-\frac{d}{2r}}C''_4)( \| u_1-u_2\| _{W^{2,1}_r(Q)}+\vert m_1-m_2\vert ^{(1)}_Q+\vert u_1-u_2\vert ^{(1)}_Q).
\end{align*}
From~\eqref{H_p Lip bound},
\begin{align*}
	&\| (H_p(\varphi(m_1),\psi(Du_1))-H_p(\varphi(m_2),\psi(Du_2)))Du_2\| _{L^r(Q)}\\
	\leq {}&T^{1/r}M_1C_H( \vert m_1-m_2\vert ^{(1)}_Q+\vert u_1-u_2\vert ^{(1)}_Q).
\end{align*}

Again, from Proposition~\ref{linear estim} we have 
\begin{align*}
\| \bar{U}\| _{W^{2,1}_r(Q)}
\leq {}&C_5\big(\| (H_p(\varphi(m_1),\psi(Du_1))-H_p(\varphi(m_2),\psi(Du_2)))Du_2\| _{L^r(Q)}\\
&+\vert \vert {\cal{L}}(\varphi(\hat m_1),\psi(Du_1),H_p(\varphi(m_1),\psi(Du_1)))\\
&-{\cal{L}}(\varphi(\hat m_2),\psi(Du_2),H_p(\varphi(m_2),\psi(Du_2)))\vert \vert _{L^r(Q)}\big)\\
\leq {} & C_5 \big(C_{{\cal{L}}}(T^{1/r}+T^{\frac{1}{2}-\frac{d}{2r}}C_4)+T^{1/r}M_1C_H\big) \big(\| u_1-u_2\| _{W^{2,1}_r(Q)}\\
&+ \vert m_1-m_2\vert ^{(1)}_Q+\vert u_1-u_2\vert ^{(1)}_Q\big).
\end{align*}

By using Lemma~\ref{Holder T} and Proposition~\ref{Holder embedding} we have
\begin{align*}
\vert \bar{U}\vert ^{(1)}_Q
\leq {}&T^{\frac{1}{2}-\frac{d+2}{2r}}C_6 \big(C_{{\cal{L}}}(T^{1/r}+T^{\frac{1}{2}-\frac{d}{2r}}C_4)+T^{1/r}M_1C_H\big) \big(\| u_1-u_2\| _{W^{2,1}_r(Q)}\\
&+ \vert m_1-m_2\vert ^{(1)}_Q+\vert u_1-u_2\vert ^{(1)}_Q\big).
\end{align*}
Therefore we obtain
\begin{equation}
\begin{split}
 &\| \hat u_1-\hat u_2\| _{W^{2,1}_r(Q)}+\vert \hat u_1-\hat u_2\vert ^{(1)}_Q+\vert \hat m_1-\hat m_2\vert ^{(1)}_Q\\
 \leq {}&(T^{1/r}+T^{\frac{1}{2}-\frac{d}{2r}}+T^{\frac{1}{2}-\frac{d+2}{2r}})C_7\big(\| u_1-u_2\| _{W^{2,1}_r(Q)}\\
 &+\vert u_1-u_2\vert ^{(1)}_Q+\vert m_1-m_2\vert ^{(1)}_Q\big),
 \end{split}
\end{equation}
where $C_7$ here remains bounded for bounded $T$. By making $T$ small enough we can obtain that $\mathcal{T}_K$ is a strict contraction in $X^T_{M_1}$. \par
Hence, from Banach fixed point theorem, if $T$ is small enough, $\mathcal{T}_K: X_{M_1}^T\rightarrow X_{M_1}^T$ admits a unique fixed point. We denote it by $(u^*,m^*)$.

\textbf{Step 4: Sobolev regularity.} Note that $(u^*,m^*)$ satisfies 
\begin{equation}
\begin{split}
\partial_t m^*-{\epsilon \Delta} m^*-\textrm{div} (H_{p}(\varphi(m^*),\psi(Du^*)))m^*
-H_p(\varphi(m^*),\psi(Du^*))Dm^*=0,
\end{split}
\end{equation}
and $\partial_{x_i} (H_p(\varphi(m^*),\psi(Du^*)))\in L^r(Q)$ and $H_p(\varphi(m^*),\psi(Du^*))\in L^r(Q)$. Hence we have $m^*\in W^{2,1}_r(Q)$ from Proposition~\ref{linear estim2}.\par
Moreover, from system \eqref{regularized fixed point system} we have $(\hat u,\hat m)$ bounded in $W^{2,1}_r(Q)$ with the bound independent of $(u,m)$, $\mathcal{T}_K(u,m)=(\hat u,\hat m)$.

\textbf{Step 5: back to the initial problem.} We conclude by showing that $\mathcal{T}$ maps the space
$$
X_{M_1}^T\cap \{\vert u\vert ^{(1)}_Q\leq K,\,1/K\leq m\leq K\},
$$
into itself, if $T$ is sufficiently small. It is then a contraction in this space. \par
From \eqref{Holder alpha+1} and \eqref{Holder alpha+1 norm}  we have, taking $t_1,t_2\in [0,T]$,
\begin{align*}
&\sup_{(x,t_1)\neq(x,t_2)\in Q}\frac{\vert \hat m(x,t_1)-\hat m(x,t_2)\vert }{T^{1-\frac{d+2}{2r}}}\\
\leq {}&\sup_{(x,t_1)\neq(x,t_2)\in Q}\frac{\vert \hat m(x,t_1)-\hat m(x,t_2)\vert }{\vert t_1-t_2\vert ^{1-\frac{d+2}{2r}}}\\
\leq {}&\vert \hat m\vert _Q^{(2-\frac{d+2}{r})}.
\end{align*}
Suppose $\hat m(x,t)$ attains its minimum at $(\hat{x},\hat{t})$, then since
$$
\frac{\vert \hat m(\hat{x},\hat{t})-\hat m(\hat{x},0)\vert }{T^{1-\frac{d+2}{2r}}}\leq \sup_{(x,t_1)\neq(x,t_2)\in Q}\frac{\vert \hat m(x,t_1)-\hat m(x,t_2)\vert }{T^{1-\frac{d+2}{2r}}},
$$
we have
$$
\hat m(\hat{x},0)-\hat m(\hat{x},\hat{t})\leq \vert \hat m(\hat{x},\hat{t})-\hat m(\hat{x},0)\vert \leq T^{1-\frac{d+2}{2r}}\vert \hat m\vert _Q^{(2-\frac{d+2}{r})}.
$$
From~\eqref{C_2} we have then 
\begin{align*}
\min_{(x,t)\in Q} \hat m(x,t)&\geq m(\hat{x},0)-\vert \hat m\vert _Q^{(2-\frac{d+2}{r})}T^{1-\frac{d+2}{2r}}\\
&\geq \underline{m}-C'_2T^{1-\frac{d+2}{2r}}.
\end{align*}
Likewise, we can get 
\begin{equation*}
\max_{(x,t)\in Q} \hat m\leq \vert \hat m\vert^{(1)}_{\mathbb{T}^d}+C'_2T^{1-\frac{d+2}{2r}}.
\end{equation*}
Moreover, as in~\eqref{eq:bound-hat-u}, 
\begin{equation}
\vert \hat u\vert ^{(1)}_Q\leq \vert u_T\vert ^{(1)}_{{\mathbb{T}^d}}+T^{\frac{1}{2}-\frac{d+2}{2r}}C'_3.
\end{equation}
We recall that both $C'_2$ and $C'_3$ remain bounded for bounded $T$, therefore we can choose $T$ so small that 
\begin{equation}
	C'_2T^{2-\frac{d+2}{r}}<\min \{ 1/K,K/2 \} ,\,\, T^{\frac{1}{2}-\frac{d+2}{2r}}C'_3<K/2.
\end{equation}

 This and~\eqref{K condition} yield $\vert \hat u\vert ^{(1)}_Q\leq K$ and $1/K\leq \hat m\leq K$ for all $(x,t)\in Q$.
\end{proof}

From Theorem \ref{fixed point 1}  we obtain the following short time existence and uniqueness result for the MFG system~\eqref{MFG} with non-separable Hamiltonian.
\begin{theorem}\label{existence1}
	Let \textbf{(H1)}, \textbf{(H2)} and \textbf{(I1)} be  in force. There exists a sufficiently small $\bar{T}$ such that for all $T\in (0,\bar{T}]$ the system~\eqref{MFG} admits a unique solution $(u^*,m^*) \in W^{2,1}_r(Q) \times W^{2,1}_r(Q)$.
\end{theorem}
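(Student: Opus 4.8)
The plan is to deduce Theorem~\ref{existence1} from Theorem~\ref{fixed point 1} by identifying the fixed points of $\mathcal{T}$ with the solutions of~\eqref{MFG}. First I would verify the algebraic identity that makes $\mathcal{T}$ a faithful reformulation of the MFG system. Take a fixed point $(u^*,m^*)=\mathcal{T}(u^*,m^*)$. In the HJB line of~\eqref{fixed point system}, substitute the definition $\mathcal{L}(m,p,q)=p\cdot q-H(m,p)$ from~\eqref{eq:PI1_def_cL} with $q=H_p(m^*,Du^*)$: the two occurrences of $H_p(m^*,Du^*)\,Du^*$ cancel and one is left with $-\partial_t u^*-\epsilon\Delta u^*+H(m^*,Du^*)=0$, the HJB equation of~\eqref{MFG}. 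For the Fokker--Planck line, I would expand the divergence by the chain rule, $\mathrm{div}\big(m^* H_p(m^*,Du^*)\big)=H_p(m^*,Du^*)\cdot Dm^*+\big(\sum_i H_{p_i m}(m^*,Du^*)\partial_{x_i}m^*+\sum_{i,j}H_{p_i p_j}(m^*,Du^*)\partial^2_{x_i x_j}u^*\big)m^*$, which is exactly the first- and zeroth-order part of the first equation in~\eqref{fixed point system}. Hence every fixed point of $\mathcal{T}$ solves~\eqref{MFG}, and running the same two computations backwards shows that any solution of~\eqref{MFG} is a fixed point of $\mathcal{T}$.

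Existence is then immediate. For $T\le\bar T$, Theorem~\ref{fixed point 1} produces a unique fixed point $(u^*,m^*)$ of $\mathcal{T}$ in $X_{M_1}^T\cap\{|u|^{(1)}_Q\le K,\,1/K\le m\le K\}$, and on this set the truncations $\varphi,\psi$ act as the identity, so the genuine operator $\mathcal{T}$ and its regularization coincide there. By the identity above, $(u^*,m^*)$ solves~\eqref{MFG}. The $W^{2,1}_r(Q)$ regularity of $u^*$ is built into the space $X_{M_1}^T$, while the $W^{2,1}_r(Q)$ regularity of $m^*$ is precisely Step~4 (Sobolev regularity) in the proof of Theorem~\ref{fixed point 1}: once $H_p(m^*,Du^*)$ and $\mathrm{div}\,H_p(m^*,Du^*)$ are known to lie in $L^r(Q)$, Proposition~\ref{linear estim2} upgrades $m^*$ to $W^{2,1}_r(Q)$.

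For uniqueness, let $(u,m)\in W^{2,1}_r(Q)\times W^{2,1}_r(Q)$ be any solution of~\eqref{MFG}. By the converse identity it is a fixed point of $\mathcal{T}$, so it suffices to show it lies in the same restricted space and then invoke the contraction property. Since $r>d+2$, Proposition~\ref{Holder embedding} gives finiteness of $|u|^{(1)}_Q$ and $|m|^{(1)}_Q$ in terms of the $W^{2,1}_r$ norms; reading the HJB and FP equations of~\eqref{MFG} as linear equations for $u$ and $m$ with their own coefficients and applying Propositions~\ref{linear estim}--\ref{linear estim2} together with Lemmas~\ref{Holder T} and~\ref{Holder embedding2}, the fixed terminal/initial data and the smallness of $T$ should force $\|u\|_{W^{2,1}_r(Q)}+|u|^{(1)}_Q+|m|^{(1)}_Q\le M_1$, $|u|^{(1)}_Q\le K$ and $1/K\le m\le K$, exactly as in Steps~2 and~5 of Theorem~\ref{fixed point 1}. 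Since the fixed point is unique in this set, $(u,m)=(u^*,m^*)$.

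The main obstacle is this last step: the estimates of Theorem~\ref{fixed point 1} were derived under the standing hypothesis that the argument already lies in $X_{M_1}^T$, so they cannot be quoted verbatim and must be re-run as genuine \emph{a priori} estimates. The delicate point is the lower bound $m\ge 1/K$, since the $L^\infty$ control of the coefficients $H_p,H_{pm},H_{pp}$ relies on $m$ staying away from $0$ precisely where the Hamiltonian may degenerate (cf.\ Remark~\ref{rem:congestion-example}). I would close this loop by a bootstrap argument exploiting that the embedding constant in Proposition~\ref{Holder embedding} and the parabolic constant in Proposition~\ref{linear estim} remain bounded as $T\to0$: for $T$ small the Hölder modulus of $m$ is of order $T^{1-\frac{d+2}{2r}}$, which keeps $m$ trapped in $[1/K,K]$ once $m(\cdot,0)=m_0\ge\underline m$, so that any bounded solution is confined to the contraction set. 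This is the only place where the degeneracy at $m=0$ and the restriction to small $T$ genuinely interact.
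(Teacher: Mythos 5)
Your overall route is the same as the paper's: Theorem~\ref{existence1} is deduced from the contraction result of Theorem~\ref{fixed point 1}, by identifying fixed points of $\mathcal{T}$ with solutions of~\eqref{MFG}. The algebraic verification you carry out (cancellation of $H_p\cdot Du^*$ in the perturbed Lagrangian at a fixed point, and the chain-rule expansion of $\mathrm{div}(m^*H_p(m^*,Du^*))$ matching the linearized Fokker--Planck line), together with the observation that $\varphi,\psi$ act as the identity on the set where the fixed point lives, is exactly the content the paper leaves implicit; the paper itself gives no written proof beyond ``from Theorem~\ref{fixed point 1} we obtain\ldots'' and the citation of Theorem~4.1 of \cite{cirant2020}. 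So the existence part and the $W^{2,1}_r$ regularity (via Step~4 of Theorem~\ref{fixed point 1} and Proposition~\ref{linear estim2}) are correct and faithful to the paper.

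The one genuine gap is the uniqueness claim in all of $W^{2,1}_r(Q)\times W^{2,1}_r(Q)$, which you correctly isolate but do not close. The contraction only yields uniqueness inside $X_{M_1}^T\cap\{\vert u\vert^{(1)}_Q\leq K,\,1/K\leq m\leq K\}$, and an arbitrary solution outside this set need not be a fixed point of the regularized operator $\mathcal{T}_K$ at all, so the Banach argument says nothing about it. Your proposed bootstrap is circular as stated: the bound $\vert \hat m\vert_Q^{(2-\frac{d+2}{r})}\leq C_2'$ of~\eqref{C_2}, which drives the estimate $m\geq \underline{m}-C_2'T^{1-\frac{d+2}{2r}}$, is obtained from Proposition~\ref{linear estim2} only after the coefficients $H_p,H_{pm},H_{pp}$ evaluated at $(m,Du)$ are known to be bounded, and that boundedness is exactly what the confinement $1/K\leq m\leq K$, $\vert Du\vert\leq K$ is supposed to deliver --- the difficulty being most acute when $H$ degenerates at $m=0$ (Remark~\ref{rem:congestion-example}). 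One would need a continuation-in-time or connectedness argument (the constants for a \emph{given} solution depend on that solution, whereas $\bar T$ must be chosen uniformly beforehand), or else a separate uniqueness proof for the forward--backward system. The paper does not supply this either: it delegates the full statement to Theorem~4.1 of \cite{cirant2020}, which is where the global uniqueness is actually established.
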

This result is a special case of Theorem~4.1 from \cite{cirant2020}. In our case, the terminal cost $u_T$ does not depend on $m(\cdot,T)$. In Theorem~4.1 of \cite{cirant2020} it may be a regularizing function of $m(\cdot,T)$, but excludes the case where $u_T$ depends locally on $m(\cdot,T)$ (see Section 3.1 of \cite{cirant2020}). Here we only consider terminal cost $u_T$ which does not depend on $m(\cdot,T)$, as the numerical approximation for nonlocal coupling with rigorous convergence analysis is beyond the scope of this paper.

The next result extends to the case with non-separable Hamiltonians the results \cite[Theorems 2.3 and 2.5]{ccg}.

\begin{theorem}\label{thm:policy_iteration}
		Let \textbf{(H1)},  \textbf{(H2)} and \textbf{(I1)} be  in force. Let $\bar{T}$ be as in Theorem~\ref{fixed point 1}. Then, there exists a $\hat{T}_1\leq \bar{T}$ such that for all $T\in (0,\hat{T}_1]$ and $R$ sufficiently large, the sequence $(u^{(n)},m^{(n)})$, generated by the policy iteration algorithm \textbf{(PI1)}, converges  to the solution  $(u^*,m^*)\in W^{2,1}_r(Q)\times  W^{2,1}_r(Q)$   of~\eqref{MFG}. 	
	\end{theorem}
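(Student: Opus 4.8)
The plan is to recognize that the policy iteration \textbf{(PI1)} is exactly the Picard (Banach) iteration of the contraction operator $\mathcal{T}$ from \eqref{fixed point system}, and then to conclude by combining Theorem~\ref{fixed point 1} with the uniqueness statement of Theorem~\ref{existence1}. The first task is to dispose of the control update \textbf{(iii)}. By \eqref{Legendre transform} and \textbf{(H2)}, the map $q\mapsto L(m,q)$ is the convex conjugate of the strictly convex $p\mapsto H(m,p)$, so the biconjugate identity gives $H(m,p)=\sup_q\{q\cdot p-L(m,q)\}$ with unconstrained maximizer $q=H_p(m,p)$, i.e.\ the Legendre involution \eqref{q} with the constraint inactive. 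Hence, as long as the bound $|q|\le R$ is not active, step \textbf{(iii)} returns exactly $q^{(n+1)}=H_p(m^{(n)},Du^{(n)})$. To force inactivity I would keep every iterate inside the invariant set $S:=X_{M_1}^T\cap\{|u|^{(1)}_Q\le K,\ 1/K\le m\le K\}$ of Theorem~\ref{fixed point 1}: there $|Du^{(n)}|\le K$ and $m^{(n)}\in[1/K,K]$, so by \textbf{(H1)} one has $|H_p(m^{(n)},Du^{(n)})|\le C_{H_p}(K)$ for a constant depending only on $K$ and the data; choosing $R>C_{H_p}(K)$ makes the maximizer interior for every $n$. This is the content of "$R$ sufficiently large."

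Next I would match the two iterations term by term. Substituting $q^{(n)}=H_p(m^{(n-1)},Du^{(n-1)})$ into the Fokker--Planck equation \eqref{alg_FP} and expanding $\mathrm{div}(m^{(n)}q^{(n)})=q^{(n)}\cdot Dm^{(n)}+m^{(n)}\,\mathrm{div}\,q^{(n)}$ together with $\mathrm{div}\,H_p(m,Du)=H_{pm}(m,Du)\cdot Dm+\sum_{i,j}H_{p_jp_i}(m,Du)\,\partial^2_{x_jx_i}u$, one sees that the equation for $m^{(n)}$ coincides with the first line of \eqref{fixed point system} evaluated at $(u,m)=(u^{(n-1)},m^{(n-1)})$ with $\hat m=m^{(n)}$. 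Likewise, since the perturbed Lagrangian in \eqref{alg_HJB} is $\mathcal{L}(m^{(n)},Du^{(n-1)},q^{(n)})$ with $q^{(n)}=H_p(m^{(n-1)},Du^{(n-1)})$, the equation for $u^{(n)}$ is precisely the second line of \eqref{fixed point system}. Therefore $(u^{(n)},m^{(n)})=\mathcal{T}(u^{(n-1)},m^{(n-1)})$ for every $n\ge 1$. On $S$ the truncations $\varphi,\psi$ are inactive, so $\mathcal{T}=\mathcal{T}_K$ there, and by Steps~2 and~5 of the proof of Theorem~\ref{fixed point 1} the map $\mathcal{T}$ sends $S$ into $S$; after possibly shrinking $\bar T$ to some $\hat T_1$ so that the initial pair $(u^{(0)},m^{(0)})$ produced from $q^{(0)}$ already lies in $S$, an induction keeps the whole orbit in $S$.

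Once these two reductions are in place, the conclusion is immediate. By Theorem~\ref{fixed point 1} the operator $\mathcal{T}$ is a strict contraction on the complete metric space $S$ with some constant $\Gamma\in(0,1)$, so the Banach fixed point theorem gives $\|(u^{(n)},m^{(n)})-(u^*,m^*)\|\le\Gamma^{\,n}\|(u^{(0)},m^{(0)})-(u^*,m^*)\|\to 0$ in the norm $\|u\|_{W^{2,1}_r(Q)}+|u|^{(1)}_Q+|m|^{(1)}_Q$, where $(u^*,m^*)$ is the unique fixed point. It remains to identify this fixed point with the solution of \eqref{MFG}: at a fixed point the identity $\mathcal{L}(m^*,Du^*,H_p(m^*,Du^*))=Du^*\cdot H_p(m^*,Du^*)-H(m^*,Du^*)$ cancels the transport term $H_p(m^*,Du^*)\cdot Du^*$ in the HJB line of \eqref{fixed point system}, recovering $-\partial_t u^*-\epsilon\Delta u^*+H(m^*,Du^*)=0$, while the first line becomes $\partial_t m^*-\epsilon\Delta m^*-\mathrm{div}(m^*H_p(m^*,Du^*))=0$. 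By the uniqueness established in Theorem~\ref{existence1}, $(u^*,m^*)$ is the solution of \eqref{MFG}.

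I expect the genuine difficulty to be the bookkeeping that traps every iterate in $S$, since this is the single point where the two smallness/largeness requirements interact: the orbit must stay in $S$ both so that the truncation is inactive (giving $\mathcal{T}=\mathcal{T}_K$) and so that the control constraint is inactive (giving $q^{(n+1)}=H_p(m^{(n)},Du^{(n)})$), and it is here that the reduction of $\bar T$ to $\hat T_1$ and the handling of the arbitrary initial field $q^{(0)}$ (whose induced first iterate must be pushed into $S$) genuinely enter. Everything after that is a formal corollary of the contraction already proved.
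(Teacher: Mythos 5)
Your proposal is correct and follows essentially the same route as the paper: identify \textbf{(PI1)} with the Picard iteration of the contraction $\mathcal{T}_K$ from Theorem~\ref{fixed point 1}, use an induction to keep every iterate in the invariant set so that both the truncations $\varphi,\psi$ and the control bound $R$ are inactive, shrink $\bar T$ to $\hat T_1$ to accommodate the initial guess $q^{(0)}$, and conclude by the Banach fixed point theorem. The only cosmetic difference is that you spell out the Legendre biconjugate identity and the algebraic cancellation identifying the fixed point with the solution of~\eqref{MFG}, which the paper leaves implicit by referring back to the proof of Theorem~\ref{fixed point 1}.
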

\begin{proof} We start with an initial guess $q^{(0)}:{\mathbb{T}^d}\times [0,T]\to\mathbb{R}^d$ with $\vert q^{(0)}\vert<R$ and $\|\textrm{div} q^{(0)}\|_{L^r(Q)}<R$. We perform the same regularization as in \textbf{Step 1} of Theorem \ref{fixed point 1} with $\varphi$ and $\psi$, $K$ is defined by \eqref{K condition}. From \eqref{alg_FP}, \eqref{alg_HJB}, Proposition \ref{linear estim} and Proposition \ref{linear estim2}, using similar arguments as in \textbf{Step 5} of Theorem \ref{fixed point 1}, there exists a sufficiently small $T_1$ such that, for $T\in (0,T_1]$ we have $m^{(0)}\in [1/K,K]$ and $\vert Du^{(0)}\vert \leq K$. Then we start with the regularized iteration system, for $n\geq 0$:
\begin{equation}\label{regularized PI}
		\left\{
		\begin{array}{ll}
			\partial_t m^{(n+1)}-{\epsilon \Delta} m^{(n+1)}-H_p(\varphi(m^{(n)}),\psi(Du^{(n)}))Dm^{(n+1)}\\
			\quad -(H_{pm}(\varphi(m^{(n)}),\psi(Du^{(n)}))(Dm^{(n)})m^{(n+1)}\\
			\quad -\sum_{i,j}H_{p_jp_i}(\varphi(m^{(n)}),\psi(Du^{(n)}))\partial_{x_jx_i}u^{(n)}m^{(n+1)}=0\\
			-\partial_t u^{(n+1)}- {\epsilon \Delta} u^{(n+1)}+H_p(\varphi(m^{(n)}),\psi(Du^{(n)})) Du^{(n+1)}\\
			\quad -{\cal{L}}(\varphi(m^{(n+1)}),\psi(Du^{(n)}),H_p(\varphi(m^{(n)}),\psi(Du^{(n)})))=0,\\
			u^{(n+1)}(x,T)=u_T(x), \quad 
			m^{(n+1)}(x,0)=m_0(x). 
		\end{array}
		\right.
	\end{equation}
From the proof of Theorem~\ref{fixed point 1}, for $T\in (0,\bar{T}]$, we have that $(u^{(n)},m^{(n)})$ converges to the solution $(u^*,m^*)\in W^{2,1}_r(Q)\times  W^{2,1}_r(Q)$ of~\eqref{MFG}. We aim to show the iteration system~\eqref{regularized PI} is the same as \textbf{(PI1)}. \par
 We can argue inductively. For each $n$, assuming $m^{(n)}\in [1/K,K]$ and $\vert Du^{(n)}\vert \leq K$, we can follow the argument in \textbf{Step 5} of Theorem~\ref{fixed point 1} to obtain: for all $T\in (0,\bar{T}]$ we have $m^{(n+1)}\in [1/K,K]$ and $\vert Du^{(n+1)}\vert \leq K$. \par
By \textbf{(H1)} and the above remark, there exists a bound on 
$$
\vert H_p(m^{(n)},Du^{(n)}) \vert+\vert H_{pm}(m^{(n)},Du^{(n)}) \vert+\vert H_{pp}(m^{(n)},Du^{(n)}) \vert
$$ 
which depends only on $K$. From Proposition \ref{linear estim} and Proposition \ref{linear estim2}, we can also obtain a bound on $\vert m^{(n)}\vert^{(1)}_Q+\| u^{(n)}\|_{W^{2,1}_r(Q)}$ which depends only on data of the problem, $K$ and $T$, remains bounded for bounded $T$. Therefore we can obtain a bound for $\vert q^{(n)}\vert+\| \textrm{div} q^{(n)}\|_{L^r(Q)}$ independent of $n$. Then the system~\eqref{regularized PI} is exactly the algorithm \textbf{(PI1)}. \par
Finally we can conclude by choosing $\hat{T}_1=\min \{T_1, \bar{T} \}$.
\end{proof}

\begin{remark} In Theorem \ref{thm:policy_iteration}, we needed to introduce $\hat{T}_1\leq \bar{T}$ for considerations related to the initial guess. If the initial guess is sufficiently well chosen, then we may have $\hat{T}_1= \bar{T}$. In practice, we found that $q^{(0)}=0$ is usually a good initial guess. This can be partially explained as follows. In this case, \eqref{alg_FP} becomes a simple heat equation with initial condition $m(x,0)\in [2/K,K/2]$. By the maximum principle of heat equation we have directly that $m^{(0)}(x,t)\in [1/K,K]$, for all $(x,t)\in Q$ and $T\in (0,\bar{T}]$.
\end{remark}

\subsection{Policy iteration \textbf{(PI2)}}
We now turn our attention to the algorithm \textbf{(PI2)}. Recall that $X_M^T$ is defined in~\eqref{X_M^T}. 
We define the operator $\mathcal{T}_2$ on this set by: $\mathcal{T}_2(u,m)=(\hat u,\hat m)$ such that
\begin{equation}\label{fixed point system2}
		\left\{
		\begin{array}{ll}
			\partial_t \hat m-{\epsilon \Delta} \hat m-H_p(m,Du)D\hat m-H_{pm}(m,Du)(Dm)\hat m\\
			\quad -\sum_{i,j}H_{p_jp_i}(m,Du)(\partial_{x_jx_i}u)\hat m=0\\
			-\partial_t \hat u- {\epsilon \Delta} \hat u+H_p(\hat m,Du) D\hat u-L(\hat m,H_p(\hat m,Du))
			=0,\\
			\hat u(x,T)=u_T(x), \quad 
			\hat m(x,0)=m_0(x),
		\end{array}
		\right.
	\end{equation}
where
$
	L(\hat m,H_p(\hat m,Du))=H_p(\hat m,Du) Du-H(\hat m,Du).
$

\begin{theorem}\label{fixed point 2} 
Let \textbf{(H1)}, \textbf{(H2)} and \textbf{(I1)} be  in force. Then there exists $M_2$ sufficiently large and $\bar{T}_2$ sufficiently small such that for all $T\in (0,\bar{T}_2]$, $\mathcal{T}_2$ is a contractive operator in the space $X_{M_2}^T$.
\end{theorem}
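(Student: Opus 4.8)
The plan is to follow the same template as the proof of Theorem~\ref{fixed point 1}, adapting it to the modified coupling in~\eqref{fixed point system2}. The essential structural difference between $\mathcal{T}$ and $\mathcal{T}_2$ is that in the backward HJB equation of~\eqref{fixed point system2} both the transport coefficient $H_p(\hat m,Du)$ and the running term $L(\hat m,H_p(\hat m,Du))$ are evaluated at the newly computed density $\hat m$, rather than at the input $m$ as in~\eqref{fixed point system}. Since $\hat m$ is produced first, from the same Fokker--Planck equation as in~\eqref{regularized fixed point system}, there is no circularity: one solves for $\hat m$, then feeds it into the HJB for $\hat u$. The contraction is established for a Lipschitz-regularized operator and then transferred back to $\mathcal{T}_2$.

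First I would perform the Lipschitz regularization exactly as in \textbf{Step 1} of Theorem~\ref{fixed point 1}, introducing $\varphi$ and $\psi$ and a cutoff level $K$ as in~\eqref{K condition}, and define the regularized operator $\mathcal{T}_{2,K}$. The bounds~\eqref{H_p Lip bound} and~\eqref{H bound} remain available verbatim, and an estimate of type~\eqref{L Lip bound} for the genuine Lagrangian $L(\varphi(\hat m),H_p(\varphi(\hat m),\psi(Du)))$, with both slots carrying $\hat m$, follows by the same manipulation. For the self-mapping step I would solve the Fokker--Planck equation for $\hat m$ first; it is identical to the one in~\eqref{regularized fixed point system}, so Proposition~\ref{linear estim2}, Proposition~\ref{Holder embedding} and Lemma~\ref{Holder T} give $\|\hat m\|_{W^{2,1}_r(Q)}\le C_1$ and $|\hat m|^{(1)}_Q\le |m_0|^{(1)}_{{\mathbb{T}^d}}+T^{\frac12-\frac{d+2}{2r}}C'_2$. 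Then I solve the HJB for $\hat u$: its transport coefficient and source are bounded by a constant depending only on $K$ by~\eqref{H bound}, so Proposition~\ref{linear estim}, Lemma~\ref{Sobolev T} and Lemma~\ref{Holder T} yield $\|\hat u\|_{W^{2,1}_r(Q)}\le T^{\frac{1}{2r}}C'_3$ and $|\hat u|^{(1)}_Q\le |u_T|^{(1)}_{{\mathbb{T}^d}}+T^{\frac12-\frac{d+2}{2r}}C'_3$. Fixing $M_2$ larger than $|m_0|^{(1)}_{{\mathbb{T}^d}}+|u_T|^{(1)}_{{\mathbb{T}^d}}$ and then taking $T$ small makes the sum of these bounds fall below $M_2$, so $\mathcal{T}_{2,K}$ maps $X_{M_2}^T$ into itself.

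For the contraction step, writing $\bar M=\hat m_1-\hat m_2$ and $\bar U=\hat u_1-\hat u_2$, the estimate for $\bar M$ is identical to~\eqref{bar M}--\eqref{M1Q bound}, giving $|\bar M|^{(1)}_Q\le T^{\frac12-\frac{d+2}{2r}}C''_4\big(\|u_1-u_2\|_{W^{2,1}_r(Q)}+|m_1-m_2|^{(1)}_Q+|u_1-u_2|^{(1)}_Q\big)$. Subtracting the two backward equations, $\bar U$ solves a linear parabolic equation whose source contains $\big[H_p(\varphi(\hat m_1),\psi(Du_1))-H_p(\varphi(\hat m_2),\psi(Du_2))\big]D\hat u_2$ together with the difference of the two Lagrangian terms. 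Both differences are controlled, via~\eqref{H_p Lip bound} and the Legendre estimate, by $|\bar M|^{(1)}_Q+|u_1-u_2|^{(1)}_Q$, while $|D\hat u_2|\le M_2$. Substituting the already-established $\bar M$ bound, every contribution carries a strictly positive power of $T$, and Proposition~\ref{linear estim} with Lemma~\ref{Holder T} and Proposition~\ref{Holder embedding} gives $\|\bar U\|_{W^{2,1}_r(Q)}+|\bar U|^{(1)}_Q\le T^{\theta}C\big(\|u_1-u_2\|_{W^{2,1}_r(Q)}+|u_1-u_2|^{(1)}_Q+|m_1-m_2|^{(1)}_Q\big)$ for some $\theta>0$. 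Shrinking $T$ drives the overall factor below $1$, yielding the contraction; the Sobolev-regularity and cutoff-removal arguments then run exactly as in \textbf{Steps 4 and 5} of Theorem~\ref{fixed point 1}.

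The main obstacle I anticipate is precisely this extra coupling through $\hat m$ in the HJB transport coefficient. In~\eqref{fixed point system} the transport term used the input $m$, so its contribution to $\bar U$ involved only $u_1-u_2$ and $m_1-m_2$ directly, whereas here it involves $\bar M=\hat m_1-\hat m_2$, which is itself an output of the operator. The resolution is that $\bar M$ has already been bounded, with the gain $T^{\frac12-\frac{d+2}{2r}}$, before the HJB is analyzed, so the coupling is harmless once the estimates are carried out in this order. One must only take care that $M_2$ is fixed first and $T$ chosen last, so that no constant implicitly depending on $M_2$ (through the Hölder norm of $\hat m$ or the coefficient bounds) spoils the smallness obtained from the powers of $T$.
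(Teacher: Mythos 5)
Your proposal is correct and follows exactly the route the paper intends: the paper omits the proof of Theorem~\ref{fixed point 2}, stating only that it is ``quite similar'' to that of Theorem~\ref{fixed point 1}, and your adaptation --- solving the Fokker--Planck equation first so that the bound on $\bar M$ with its gain $T^{\frac12-\frac{d+2}{2r}}$ is available before estimating the HJB equation whose drift and source now depend on $\hat m$ --- is precisely the right way to carry that template over. No gaps.
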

We omit the proof since it is quite similar to the proof of Theorem~\ref{fixed point 1}.

\begin{theorem}\label{thm:policy_iteration2}
		Let \textbf{(H1)},  \textbf{(H2)} and \textbf{(I1)} be  in force and $\bar{T}_2$ be defined as in Theorem~\ref{fixed point 2}. Then, there exists a $\hat{T}_2\leq \bar{T}_2$, such that for all $T\in (0,\hat{T}_2]$ and $R$ sufficiently large, the sequence $(u^{(n)},m^{(n)})$, generated by the policy iteration algorithm \textbf{(PI2)}, converges  to the solution $(u^*,m^*)\in W^{2,1}_r(Q)\times  W^{2,1}_r(Q)$   of~\eqref{MFG}. 	
	\end{theorem}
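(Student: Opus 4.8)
The plan is to follow the proof of Theorem~\ref{thm:policy_iteration} closely in its overall architecture, substituting the operator $\mathcal{T}_2$ of~\eqref{fixed point system2} for $\mathcal{T}$ and invoking Theorem~\ref{fixed point 2} in place of Theorem~\ref{fixed point 1}. I would begin with an initial guess $q^{(0)}$ satisfying $|q^{(0)}| < R$ and $\|\textrm{div}\, q^{(0)}\|_{L^r(Q)} < R$, and perform the same Lipschitz regularization via $\varphi$ and $\psi$ with $K$ fixed by~\eqref{K condition}. Applying Propositions~\ref{linear estim} and~\ref{linear estim2} exactly as in \textbf{Step~5} of Theorem~\ref{fixed point 1}, I would produce a small $T_2$ so that the first iterate $(\tilde u^{(0)}, m^{(0)})$ lies in $\{1/K \le m \le K,\ |Du| \le K\}$.

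The second step is to recognize the regularized version of \textbf{(PI2)} as the fixed-point iteration of $\mathcal{T}_2$. Here the new feature, compared with \textbf{(PI1)}, is the intermediate policy update~\eqref{eq:PI2_update_policy1}. By the Legendre duality between $H$ and $L$ and the arg-max characterization~\eqref{q}, whenever the maximizers are interior---which holds once $R$ exceeds the relevant bounds---one has $\tilde q^{(n)} = H_p(m^{(n)}, D\tilde u^{(n-1)})$ from~\eqref{eq:PI2_update_policy1} and $q^{(n+1)} = H_p(m^{(n)}, D\tilde u^{(n)})$ from~\eqref{eq:PI2_update_policy}. Substituting these identities into the Fokker--Planck step and expanding $\textrm{div}(m^{(n)} q^{(n)})$ by the chain rule, and substituting them into the HJB step, shows that $(\tilde u^{(n)}, m^{(n)}) = \mathcal{T}_2(\tilde u^{(n-1)}, m^{(n-1)})$; in particular the HJB for $\tilde u^{(n)}$ carries the drift $H_p(m^{(n)}, \cdot)$ built from the already-updated density $m^{(n)}$, which is precisely why the second entry of $\mathcal{T}_2$ uses $\hat m$ rather than $m$.

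By Theorem~\ref{fixed point 2}, for $T \le \bar T_2$ the (regularized) operator $\mathcal{T}_2$ is a contraction on $X_{M_2}^T$, so the Banach fixed point theorem gives convergence of the iterates to the unique fixed point $(u^*, m^*)$. I would then check that this fixed point solves~\eqref{MFG}: at the fixed point the cancellation $L(m^*, H_p(m^*, Du^*)) = H_p(m^*, Du^*)\cdot Du^* - H(m^*, Du^*)$ reduces the HJB to $-\partial_t u^* - \epsilon\Delta u^* + H(m^*, Du^*) = 0$, while the chain-rule expansion of $\textrm{div}(m^* H_p(m^*, Du^*))$ turns the Fokker--Planck equation into the one in~\eqref{MFG}; uniqueness then follows from Theorem~\ref{existence1}.

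Finally, I would justify that both the regularization and the control bound $R$ are inactive. An induction in the spirit of \textbf{Step~5} of Theorem~\ref{fixed point 1} keeps every iterate inside $\{1/K \le m \le K,\ |Du| \le K\}$, so $\varphi$ and $\psi$ act as the identity. Propositions~\ref{linear estim} and~\ref{linear estim2} then give a bound on $|m^{(n)}|^{(1)}_Q + \|\tilde u^{(n)}\|_{W^{2,1}_r(Q)}$ that is uniform in $n$ and bounded for bounded $T$, whence $|q^{(n)}| + \|\textrm{div}\, q^{(n)}\|_{L^r(Q)}$ is bounded independently of $n$. Choosing a single $R$ larger than this uniform bound makes both arg-maxes interior for every $n$, so the regularized scheme coincides with the true \textbf{(PI2)}, and I would conclude with $\hat T_2 = \min\{T_2, \bar T_2\}$. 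I expect the main obstacle to be exactly this last point: because \textbf{(PI2)} updates the policy twice per step, one must ensure that both the intermediate maximizer $\tilde q^{(n)}$ and the final maximizer $q^{(n+1)}$ remain interior simultaneously and uniformly in $n$ under one fixed $R$, which is what forces the uniform-in-$n$ a priori estimates above.
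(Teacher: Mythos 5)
Your proposal is correct and follows essentially the same route as the paper: regularize via $\varphi,\psi$ with $K$ from~\eqref{K condition}, identify the regularized \textbf{(PI2)} iteration with the fixed-point iteration of $\mathcal{T}_2$ (including the key identities $\tilde q^{(n)}=H_p(m^{(n)},D\tilde u^{(n-1)})$ and $q^{(n+1)}=H_p(m^{(n)},D\tilde u^{(n)})$), invoke the contraction of Theorem~\ref{fixed point 2}, and then use an induction keeping $1/K\le m^{(n)}\le K$, $|Du^{(n)}|\le K$ to get $n$-uniform bounds on both $|q^{(n)}|+\|\mathrm{div}\,q^{(n)}\|_{L^r(Q)}$ and $|\tilde q^{(n)}|+\|\mathrm{div}\,\tilde q^{(n)}\|_{L^r(Q)}$, so that for $R$ large the regularized scheme coincides with \textbf{(PI2)} and one takes $\hat T_2=\min\{T_2,\bar T_2\}$. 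Your emphasis on needing \emph{both} maximizers interior simultaneously is precisely the point where the paper's proof departs from that of Theorem~\ref{thm:policy_iteration}.
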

We sketch the proof of Theorem~\ref{thm:policy_iteration2} to stress the main differences with the proof of Theorem~\ref{thm:policy_iteration}.
\begin{proof}
We start with an initial guess $q^{(0)}:{\mathbb{T}^d}\times [0,T]\to\mathbb{R}^d$ with $\vert q^{(0)}\vert\leq R$ and $\|\textrm{div} q^{(0)}\|_{L^r(Q)}\leq R$. We perform the same regularization as in \textbf{Step 1} of Theorem \ref{fixed point 1} with $\varphi$ and $\psi$, $K$ is defined by \eqref{K condition}. Using the same argument as in Theorem \ref{thm:policy_iteration} there exists a sufficiently small $T_2$ such that, for $T\in (0,T_2]$ we have $m^{(0)}\in [1/K,K]$ and $\vert Du^{(0)}\vert \leq K$. Then we start with the regularized iteration system, for $n\geq 0$:
\begin{equation}\label{regularized-PI2}
		\left\{
		\begin{array}{ll}
			\partial_t m^{(n+1)}-{\epsilon \Delta} m^{(n+1)}-H_p(\varphi(m^{(n)}),\psi(Du^{(n)}))Dm^{(n+1)}\\
			\quad-(H_{pm}(\varphi(m^{(n)}),\psi(Du^{(n)}))(Dm^{(n)})m^{(n+1)}\\
			\quad -\sum_{i,j}H_{p_jp_i}(\varphi(m^{(n)}),\psi(Du^{(n)}))\partial_{x_jx_i}u^{(n)}m^{(n+1)}=0,\\
			-\partial_t u^{(n+1)}- {\epsilon \Delta} u^{(n+1)}+H_p(\varphi(m^{(n+1)}),\psi(Du^{(n)})) Du^{(n+1)}\\
			\quad -L\big(\varphi(m^{(n+1)}),H_p(\varphi(m^{(n+1)}),\psi(Du^{(n)}))\big)=0,\\
			u^{(n+1)}(x,T)=u_T(x),\\
			m^{(n+1)}(x,0)=m_0(x).
		\end{array}
		\right.
	\end{equation}\par
 We can argue by induction: if $m^{(n)}\in [1/K,K]$, $\vert Du^{(n)}\vert \leq K$ and $T\in (0,\bar{T}_2]$, then we have $m^{(n+1)}\in [1/K,K]$ and $\vert Du^{(n+1)}\vert \leq K$. \par
There exist bounds depending only on $K$ for 
\begin{align*}
&\vert H_p(m^{(n)},Du^{(n)}) \vert+\vert H_{pm}(m^{(n)},Du^{(n)}) \vert+\vert H_{pp}(m^{(n)},Du^{(n)}) \vert\\
\text{and} \,\,\,&\vert H_p(m^{(n+1)},Du^{(n)}) \vert+\vert H_{pm}(m^{(n+1)},Du^{(n)}) \vert+\vert H_{pp}(m^{(n+1)},Du^{(n)}) \vert.
\end{align*}
We can then follow the same arguments as Theorem \ref{thm:policy_iteration} and obtain bounds on 
$$
\vert q^{(n)}\vert+\|{\rm{div} }q^{(n)}\|_{L^r(Q)}\,\,\, \text{and} \,\,\, \vert \tilde q^{(n)}\vert+\| {\rm{div} }\tilde q^{(n)}\|_{L^r(Q)},
$$
independent of $n$. Then the system~\eqref{regularized-PI2} is exactly the algorithm \textbf{(PI2)}. We can conclude by choosing $\hat{T}_2=\min \{T_2, \bar{T}_2 \}$.
\end{proof}

\section{A rate of convergence for the policy iteration method}\label{sec:estimate}

\begin{theorem}\label{converge rate 1}
	Let \textbf{(H1)}, \textbf{(H2)} and \textbf{(I1)} be  in force. Let $\hat{T}_1$ and $R$ be as in Theorem~\ref{thm:policy_iteration}. Then, there exists a constant $C$, which depends only on the data of problem and remains bounded for all $T\in (0,\hat{T}_1]$, such that, if $(u^{(n)}, m^{(n)})$ is the sequence generated by the policy iteration method \textbf{(PI1)}, we have
\begin{equation}\label{mLs}
	\| m^{(n+1)} - m^*\| _{W^{2,1}_r(Q)} \le C \big(\| q^{(n+1)}-q^*\| _{L^r (Q)}+\| {\rm{div}}(q^{(n+1)}-q^*)\| _{L^r (Q)}\big),
\end{equation}
and
	\begin{equation} \label{uW21r}
		\begin{split}
	 \| u^{(n+1)}-u^*\| _{W^{2,1}_r(Q)}
\le & CT^{\frac{1}{2}-\frac{d}{2r}}\big(\| u^{(n)}-u^*\| _{W^{2,1}_r(Q)}+\| m^{(n+1)}-m^*\| _{W^{2,1}_r(Q)}\big).
 	 \end{split}
	\end{equation}
\end{theorem}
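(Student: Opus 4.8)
I want to prove the two estimates in Theorem~\ref{converge rate 1}. The key point is that $(u^*,m^*)$ is a fixed point of the policy iteration in the sense that solving \eqref{alg_FP}–\eqref{alg_HJB} with the limiting control $q^*=H_p(m^*,Du^*)$ reproduces the MFG solution. So the strategy is to subtract the equations satisfied by $(u^{(n+1)},m^{(n+1)})$ from those satisfied by $(u^*,m^*)$, and estimate the differences in the appropriate anisotropic Sobolev norms using the linear parabolic estimates of Proposition~\ref{linear estim} and Proposition~\ref{linear estim2}, together with the smallness gained from Lemmas~\ref{Holder T},~\ref{Sobolev T} and~\ref{Holder embedding2}.

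Let me sketch the details. For the first estimate \eqref{mLs}, I would write the Fokker--Planck equation \eqref{alg_FP} for $m^{(n+1)}$ with drift $q^{(n+1)}$ and compare it to the equation for $m^*$ with drift $q^*$. Setting $\bar M = m^{(n+1)}-m^*$, the difference solves a linear parabolic equation of the form $\partial_t \bar M - \epsilon\Delta\bar M - \operatorname{div}(\bar M\,q^{(n+1)}) = \operatorname{div}(m^*(q^{(n+1)}-q^*))$. Expanding the divergence on the right produces terms controlled by $\|q^{(n+1)}-q^*\|_{L^r(Q)}$ and $\|\operatorname{div}(q^{(n+1)}-q^*)\|_{L^r(Q)}$, using that $m^*$ and $Dm^*$ are bounded in $W^{2,1}_r(Q)$ (hence in $C^{1,0}(Q)$ via Proposition~\ref{Holder embedding}). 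Applying Proposition~\ref{linear estim2} with zero initial data (since both share $m_0$) yields \eqref{mLs} directly, with $C$ bounded for bounded $T$.

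For the second estimate \eqref{uW21r}, set $\bar U = u^{(n+1)}-u^*$ and subtract the two HJB equations. The difference solves $-\partial_t\bar U - \epsilon\Delta\bar U + q^{(n+1)}D\bar U = \mathcal{R}$, where the right-hand side $\mathcal{R}$ collects: the drift-difference term $(q^{(n+1)}-q^*)Du^*$, and the difference of the perturbed Lagrangians $\mathcal{L}(m^{(n+1)},Du^{(n)},q^{(n+1)}) - \mathcal{L}(m^*,Du^*,q^*)$. The crucial feature here is that $\mathcal{L}$ in \eqref{alg_HJB} is evaluated at the \emph{previous} gradient $Du^{(n)}$, which is precisely what produces the $\|u^{(n)}-u^*\|_{W^{2,1}_r(Q)}$ term on the right of \eqref{uW21r}. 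Using the Lipschitz bound \eqref{L Lip bound}-type estimate on $\mathcal{L}$, together with $q^{(n+1)}-q^*$ being controlled through $m^{(n+1)}-m^*$ and $u^{(n)}-u^*$ (since $q^{(n+1)}$ is the $\arg\max$ depending on $Du^{(n)}$ and $m^{(n)}$), I bound $\|\mathcal{R}\|_{L^r(Q)}$ by $C(\|u^{(n)}-u^*\|_{W^{2,1}_r(Q)}+\|m^{(n+1)}-m^*\|_{W^{2,1}_r(Q)})$. Then Proposition~\ref{linear estim} with zero terminal data and the smallness factor $T^{\frac{1}{2}-\frac{d}{2r}}$ from Lemma~\ref{Sobolev T} (or the $C^{1,0}$-to-$W^{2,1}_r$ interplay) delivers \eqref{uW21r}.

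\textbf{Main obstacle.} The delicate point is tracking exactly how the control difference $q^{(n+1)}-q^*$ is expressed in terms of the iterates. Since $q^{(n+1)}=\arg\max_{|q|\le R}\{q\cdot Du^{(n)}-L(m^{(n)},q)\}$, it depends on $Du^{(n)}$ and $m^{(n)}$ rather than on the $(n{+}1)$-th iterates; I must use the Lipschitz regularity of the maximizer \eqref{q} (valid because $H$ is strictly convex by \textbf{(H2)} and $R$ is large so the constraint is inactive near the solution) to bound $|q^{(n+1)}-q^*|$ and its divergence by the errors in $Du^{(n)}$ and $m^{(n)}$. Threading these indices correctly—making sure the right-hand sides involve the stated combination of $u^{(n)}$ and $m^{(n+1)}$ errors and nothing of higher index—is where the bookkeeping must be done carefully, but no genuinely new estimate beyond those already established is needed.
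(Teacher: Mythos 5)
Your proposal follows the same route as the paper for both estimates: subtract the equations satisfied by the iterate and by the limit, apply Propositions~\ref{linear estim} and~\ref{linear estim2} with zero initial/terminal data, and extract the smallness factor $T^{\frac12-\frac{d}{2r}}$ via Lemma~\ref{Holder embedding2}. The Fokker--Planck part of your sketch is exactly the paper's argument.

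One point in the HJB step needs correcting. You propose to bound the source term by estimating the drift-difference $(q^{(n+1)}-q^*)Du^*$ and the difference of perturbed Lagrangians separately, and you assert that $q^{(n+1)}-q^*$ is ``controlled through $m^{(n+1)}-m^*$ and $u^{(n)}-u^*$''. It is not: $q^{(n+1)}=H_p(m^{(n)},Du^{(n)})$, so a direct estimate of $q^{(n+1)}-q^*$ brings in $\|m^{(n)}-m^*\|$, an index that does not appear on the right-hand side of~\eqref{uW21r}. The paper avoids this because no estimate on $q^{(n+1)}-q^*$ is needed in the HJB step at all: expanding
${\cal{L}}(m^{(n+1)},Du^{(n)},q^{(n+1)})-{\cal{L}}(m^*,Du^*,q^*)=q^{(n+1)}(Du^{(n)}-Du^*)+(q^{(n+1)}-q^*)Du^*-H(m^{(n+1)},Du^{(n)})+H(m^*,Du^*)$
shows that the $(q^{(n+1)}-q^*)Du^*$ term cancels exactly against your drift-difference term, leaving ${\cal F}=H(m^*,Du^*)-H(m^{(n+1)},Du^{(n)})+q^{(n+1)}(Du^{(n)}-Du^*)$, in which $q^{(n+1)}$ enters only as a bounded coefficient. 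The Lipschitz bound on $H$ then yields precisely the combination $\|m^{(n+1)}-m^*\|+\|u^{(n)}-u^*\|$ claimed in the statement, and the rest of your argument goes through. (The control difference does have to be estimated in terms of $m^{(n)}-m^*$ and $u^{(n)}-u^*$, but only later, in Corollary~\ref{cor:rate}, where the extra lag in the index is harmless.)
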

\begin{proof}
Along the proof, the constant $C$ can change from line to line, but it is always independent of $n$ and remains bounded for all $T\in (0,\hat{T}_1]$. We start with the proof of~\eqref{mLs} for the FP equation. For all $n$ we have $u^{(n)}, m^{(n)}\in W^{2,1}_r(Q)$. As in Theorem~\ref{fixed point 1} we take $(u,m) \in X_{M_1}^T$, so that
$$
\| u\| _{W^{2,1}_r(Q)}+\vert u\vert ^{(1)}_Q+\vert m\vert ^{(1)}_Q\leq M_1.
$$
From
$$
q^{(n)}=H_p(m^{(n-1)},Du^{(n-1)}),
$$
we have
\begin{equation}\label{pdq_j}
\partial_{x_i}q_j^{(n)}=H_{mp_j}(m^{(n-1)},Du^{(n-1)})\partial_{x_i}m^{(n-1)}+H_{p_ip_j}(m^{(n-1)},Du^{(n-1)})\partial^2_{x_ix_j}u^{(n-1)}.
\end{equation}
Hence for all $n$, ${\rm{div} }q^{(n)}\in L^r(Q)$.

Set $M^{(n+1)}=m^{(n+1)}-m^*$. Then $M^{(n+1)}$ satisfies the equation
\begin{equation}\label{eq1}
	\partial_t M^{(n+1)}- {\epsilon \Delta} M^{(n+1)}-{\rm{div} } (q^{(n+1)} M^{(n+1)})={\rm{div} }((q^{(n+1)}-q^*) m^*),
\end{equation}
with $M^{(n+1)}(\cdot,0)=0$. This can be reformulated as
\begin{align*}
&\partial_t M^{(n+1)}- {\epsilon \Delta} M^{(n+1)}-q^{(n+1)} DM^{(n+1)}-\textrm{div} (q^{(n+1)}) M^{(n+1)}\\
=\,\,&{\rm{div} } (q^{(n+1)}-q^*) m^*+ (q^{(n+1)}-q^*) Dm^*.
\end{align*}
Since the $L^r(Q)$ norms of both $q^{(n+1)}$ and $\textrm{div} q^{(n+1)}$ are bounded independently of $n$, from Proposition~\ref{linear estim2}, assuming $T$ small, we have 
\begin{align*}
\| M^{(n+1)}\| _{W^{2,1}_r(Q)}\leq {}&C\| {\rm{div} } (q^{(n+1)}-q^*) m^*+ (q^{(n+1)}-q^*) Dm^*\| _{L^r (Q)}\\
\leq {}&C\vert m^*\vert^{(1)}_Q(\| q^{(n+1)}-q^*\| _{L^r(Q)}+\| {\rm{div} }(q^{(n+1)}-q^*)\| _{L^r (Q)})\\
\leq {}&C(\| q^{(n+1)}-q^*\| _{L^r(Q)}+\| {\rm{div} }(q^{(n+1)}-q^*)\| _{L^r (Q)}).
\end{align*}

\indent We now prove the estimate~\eqref{uW21r} for the HJB equation. The function  $U^{(n+1)}=u^{(n+1)}-u^*$ satisfies the equation
\begin{equation*}
-\partial_t U^{(n+1)}- {\epsilon \Delta} U^{(n+1)}+q^{(n+1)} DU^{(n+1)}={\cal F}(x,t)
\end{equation*}
with $U^{(n+1)}(\cdot,T)=0$, where 
\begin{align*}
	{\cal F}(x,t)= {}&H(m^*,Du^*)-(q^{(n+1)}Du^*-{\cal{L}}(m^{(n+1)},Du^{(n)},q^{(n+1)}))\\
	= {}&H(m^*,Du^*)-H(m^{(n+1)},Du^{(n)})+q^{(n+1)}(Du^{(n)}-Du^*).
\end{align*}
Hence, recalling that $ q^{(n+1)} =H_p(m^{(n)},Du^{(n)})$ is bounded, again from Proposition~\ref{linear estim} we have
\begin{equation}\label{eq:conv}
	\| U^{(n+1)}\| _{W^{2,1}_r(Q)}\le  C\| {\cal F} \| _{L^r(Q)}.
\end{equation}
As we have assumed that conditions for Theorem~\ref{fixed point 1} are satisfied, then we have $m^*, m^{(n+1)}\in [\frac{1}{K},K]$, $\vert Du^*\vert \leq K$, $\vert Du^{(n)}\vert\leq K$ and $\vert q^{(n+1)}\vert \leq R$. Therefore from \eqref{H_p Lip bound} we have
$$
H(m^*,Du^*)-H(m^{(n+1)},Du^{(n)})\leq C_H\big(\| Du^{(n)}-Du^*\| _{L^{\infty}(Q)}+\| m^{(n+1)}-m^*\| _{L^\infty(Q)}\big),
$$
and then
\begin{align*}
	&\| {\cal F}\| _{L^r(Q)}\\
	\leq {}&T^{\frac{1}{r}}\| {\cal F}\| _{L^\infty(Q)}\\
	\leq {}&T^{\frac{1}{r}}\big(\| Du^{(n)}-Du^*\| _{L^{\infty}(Q)}+\| m^{(n+1)}-m^*\| _{L^\infty(Q)}\big)\\
	\leq {}&T^{\frac{1}{r}}\big(\vert u^{(n)}-u^*\vert ^{(1)}_Q+\vert m^{(n+1)}-m^*\vert ^{(1)}_Q\big)\\
	\leq {}&T^{\frac{1}{r}}\cdot CT^{\frac{1}{2}-\frac{d+2}{2r}}\big(\| u^{(n)}-u^*\| _{W^{2,1}_r(Q)}+\| m^{(n+1)}-m^*\| _{W^{2,1}_r(Q)}\big)\\
	&(\text{from Lemma \ref{Holder embedding2}})\\
	 \leq {}& CT^{\frac{1}{2}-\frac{d}{2r}}\big(\| u^{(n)}-u^*\| _{W^{2,1}_r(Q)}+\| m^{(n+1)}-m^*\| _{W^{2,1}_r(Q)}\big).
\end{align*}

Then we can get~\eqref{uW21r} from~\eqref{eq:conv}.
\end{proof}

\begin{corollary}\label{cor:rate}
Denote $M^{(n)}=m^{(n)}-m^*$, $U^{(n)}=u^{(n)}-u^*$ for all $n\geq 1$. Under the same assumptions as in Theorem~\ref{converge rate 1}, the following estimate holds for $n {\gamma}e 1$: 
\begin{equation}\label{estimate_rate}
\begin{split}
&\| U^{(n+1)}\| _{W^{2,1}_r(Q)}+\| M^{(n+1)}\| _{W^{2,1}_r(Q)}\\
\le {}&CT^{\frac{1}{2}-\frac{d}{2r}} \big(\| U^{(n)}\| _{W^{2,1}_r(Q)}+\| M^{(n)}\| _{W^{2,1}_r(Q)}+\| U^{(n-1)}\| _{W^{2,1}_r(Q)}\big).
\end{split}
	\end{equation}
Moreover, there exist $\ell>1$, $n_0$ sufficiently large and $\breve{T}\in (0,\hat{T}_1]$ sufficiently small such that, for all $T\in (0,\breve{T}]$, we have a linear rate of convergence, i.e., 
\begin{equation*}
	\| U^{(n)}\| _{W^{2,1}_r(Q)}+\| M^{(n)}\| _{W^{2,1}_r(Q)}\leq \left(\frac{1}{\ell}\right)^{n-n_0}\big(\| U^{(n_0)}\| _{W^{2,1}_r(Q)}+\| M^{(n_0)}\| _{W^{2,1}_r(Q)}\big).
\end{equation*}
\end{corollary}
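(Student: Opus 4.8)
The plan is to first establish the one-step estimate~\eqref{estimate_rate} and then read off the linear rate by analysing it as a scalar second-order recurrence. Throughout I write $a_n := \| U^{(n)}\|_{W^{2,1}_r(Q)} + \| M^{(n)}\|_{W^{2,1}_r(Q)}$, and I use the two ingredients~\eqref{mLs} and~\eqref{uW21r} from Theorem~\ref{converge rate 1}, the identity $q^{(n+1)} = H_p(m^{(n)},Du^{(n)})$ with its spatial derivative~\eqref{pdq_j}, and the fact that $M^{(n)}(\cdot,0)=0$ and $U^{(n)}(\cdot,T)=0$ so that Lemma~\ref{Holder embedding2} applies to both.

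To bound the right-hand side of~\eqref{mLs} I would write $q^{(n+1)}-q^* = H_p(m^{(n)},Du^{(n)}) - H_p(m^*,Du^*)$ and combine the Lipschitz bound~\eqref{H_p Lip bound} with Lemma~\ref{Holder embedding2}: since $\|g\|_{L^r(Q)}\le T^{1/r}\|g\|_{L^\infty(Q)}$ and $\vert M^{(n)}\vert^{(1)}_Q, \vert U^{(n)}\vert^{(1)}_Q \le CT^{1/2-(d+2)/(2r)}\|\cdot\|_{W^{2,1}_r(Q)}$, the first term satisfies $\| q^{(n+1)}-q^*\|_{L^r(Q)} \le CT^{1/2-d/(2r)}a_n$. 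For $\mathrm{div}(q^{(n+1)}-q^*)$ I would differentiate using~\eqref{pdq_j} and split each summand into a factor where the difference falls on the Lipschitz coefficients $H_{mp}, H_{pp}$ (estimated as above, carrying a $T^{1/2-d/(2r)}$ factor since $Dm^{(n)}$ and $D^2u^{(n)}$ are bounded in $L^r(Q)$) plus the two terms where the difference falls on $Dm^{(n)}-Dm^*$ and on $D^2u^{(n)}-D^2u^*$. The first of these still carries smallness (bound $\| DM^{(n)}\|_{L^r(Q)}\le T^{1/r}\vert M^{(n)}\vert^{(1)}_Q$ and apply Lemma~\ref{Holder embedding2}), but the last one, $H_{pp}(m^*,Du^*)\,D^2(u^{(n)}-u^*)$, is controlled only by $C\| U^{(n)}\|_{W^{2,1}_r(Q)}$ with no smallness factor.

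This missing smallness is the crux of the argument and the step I expect to be the main obstacle. To recover it I would feed the HJB estimate~\eqref{uW21r} taken at index $n$, namely $\| U^{(n)}\|_{W^{2,1}_r(Q)} \le CT^{1/2-d/(2r)}\big(\| U^{(n-1)}\|_{W^{2,1}_r(Q)} + \| M^{(n)}\|_{W^{2,1}_r(Q)}\big)$, back into that term. Substituting into~\eqref{mLs} produces exactly the extra $\| U^{(n-1)}\|$ seen in~\eqref{estimate_rate} and restores the global factor $T^{1/2-d/(2r)}$, giving $\| M^{(n+1)}\|_{W^{2,1}_r(Q)} \le CT^{1/2-d/(2r)}\big(\| U^{(n)}\| + \| M^{(n)}\| + \| U^{(n-1)}\|\big)$. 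Adding~\eqref{uW21r} at index $n+1$ and inserting this bound on $\| M^{(n+1)}\|$ (absorbing the resulting $T^{1-d/r}$ contributions into the leading order for $T$ small) then yields~\eqref{estimate_rate}. It is precisely the genuinely $O(1)$ second-derivative term $H_{pp}\,D^2(u^{(n)}-u^*)$ that forces the three-term structure of the estimate.

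For the linear rate, \eqref{estimate_rate} reads $a_{n+1}\le \theta(a_n + a_{n-1})$ with $\theta := CT^{1/2-d/(2r)}\to 0$ as $T\to 0$. I would fix $\ell>1$ and choose $\breve T\in(0,\hat T_1]$ so small that $\theta \le \tfrac{1}{\ell(\ell+1)}$ for all $T\in(0,\breve T]$. Introducing the companion sequence $S_n := a_n + \lambda a_{n-1}$ with $\lambda\in[\theta\ell,\ \tfrac1\ell-\theta]$ (an interval that is nonempty exactly under the bound on $\theta$), one computes $S_{n+1}=a_{n+1}+\lambda a_n\le (\theta+\lambda)a_n+\theta a_{n-1}\le \tfrac1\ell a_n+\tfrac\lambda\ell a_{n-1}=\tfrac1\ell S_n$; equivalently one compares with the dominant root $\tfrac12(\theta+\sqrt{\theta^2+4\theta})<1$ of $x^2-\theta x-\theta=0$. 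Hence $a_n\le S_n\le \ell^{-(n-n_0)}S_{n_0}$, which is geometric decay at rate $1/\ell$, with $S_{n_0}\le a_{n_0}+a_{n_0-1}$; since $a_n\to 0$ by Theorem~\ref{thm:policy_iteration}, choosing $n_0$ sufficiently large (and, if desired, relabelling the base index) gives the stated bound in terms of $\| U^{(n_0)}\|_{W^{2,1}_r(Q)}+\| M^{(n_0)}\|_{W^{2,1}_r(Q)}$.
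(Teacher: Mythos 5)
Your proof is correct and follows essentially the same route as the paper for the main estimate \eqref{estimate_rate}: the same splitting of $q^{(n+1)}-q^*$ and of $\mathrm{div}(q^{(n+1)}-q^*)$ via \eqref{pdq_j}, the same identification of $H_{pp}\,\partial^2_{x_ix_j}(u^{(n)}-u^*)$ as the one term carrying no smallness factor, and the same remedy of substituting \eqref{uW21r} at index $n$ into the bound for $\|m^{(n+1)}-m^*\|_{W^{2,1}_r(Q)}$ to recover the factor $T^{\frac12-\frac{d}{2r}}$ and produce the extra $\|U^{(n-1)}\|$ term. The only point of divergence is the passage from the two-step recurrence $a_{n+1}\le\theta(a_n+a_{n-1})$ to geometric decay: the paper picks $n_0$ with $a_{n_0+1}\le a_{n_0}/\ell$ (justified by convergence to zero) and runs an induction under the same threshold $\theta\le \frac{1}{\ell+\ell^2}$, whereas you contract the companion sequence $S_n=a_n+\lambda a_{n-1}$ directly; your variant is self-contained and sidesteps the paper's slightly informal base-case step, at the cosmetic cost of ending with $S_{n_0}$ rather than $a_{n_0}$ on the right-hand side, which you correctly note is absorbed by adjusting $n_0$.
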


\begin{proof} Along the proof, the constant $C$ can change from line to line, but it is always independent of $n$ and remains bounded for all $T\in (0,\hat{T}_1]$. \par
	First note that, from 
	$$
	q^{(n+1)}-q^*=H_p(m^{(n)},Du^{(n)})-H_p(m^*,Du^*),
	$$
	and \textbf{(H1)}  we have
\begin{equation*}
\begin{split}
	\| q^{(n+1)}-q^*\| _{L^r(Q)}\leq {}&CT^{\frac{1}{r}}\big(\vert u^{(n)}-u^*\vert ^{(1)}_Q+\vert m^{(n)}-m^*\vert ^{(1)}_Q\big)\\
	\leq {}&CT^{\frac{1}{2}-\frac{d}{2r}}\big(\| U^{(n)}\| _{W^{2,1}_r(Q)}+\| M^{(n)}\| _{W^{2,1}_r(Q)}\big).
\end{split}
\end{equation*}
From~\eqref{pdq_j} and \textbf{(H1)} we have
\begin{align*}
&\| \partial_{x_i}q_j^{(n+1)}-\partial_{x_i}q_j^*\| _{L^r(Q)}\\
={}&\| H_{mp_j}(m^{(n)},Du^{(n)})\partial_{x_i}m^{(n)}-H_{mp_j}(m^*,Du^*)\partial_{x_i}m^*\| _{L^r(Q)}\\
&+\| H_{p_ip_j}(m^{(n)},Du^{(n)})\partial^2_{x_ix_j}u^{(n)}-H_{p_ip_j}(m^*,Du^*)\partial^2_{x_ix_j}u^*\| _{L^r(Q)}\\
\leq {}&CT^{\frac{1}{r}}\vert m^{(n)}-m^*\vert ^{(1)}_Q+M_1\| H_{mp_j}(m^{(n)},Du^{(n)})-H_{mp_j}(m^*,Du^*)\| _{L^r(Q)}\\
&+M_1\| H_{p_ip_j}(m^{(n)},Du^{(n)})-H_{p_ip_j}(m^*,Du^*)\| _{L^r(Q)}\\
&+C\| \partial^2_{x_ix_j}u^{(n)}-\partial^2_{x_ix_j}u^*\| _{L^r(Q)},
\end{align*}
and 
\begin{equation*}
\begin{split}
	&\| \textrm{div} q^{(n+1)}-\textrm{div} q^*\| _{L^r(Q)} \\
	\leq {}&C(\| u^{(n)}-u^*\| _{W^{2,1}_r(Q)}+T^{\frac{1}{r}}\vert m^{(n)}-m^*\vert ^{(1)}_Q\big)\\
	\leq {}&C(\| u^{(n)}-u^*\| _{W^{2,1}_r(Q)}+T^{\frac{1}{2}-\frac{d}{2r}}\| m^{(n)}-m^*\| _{W^{2,1}_r(Q)}\big).
\end{split}
\end{equation*}
	By~\eqref{uW21r} and the fact that $r>d+2$ we have
\begin{align*}
&\| m^{(n+1)} - m^*\| _{W^{2,1}_r(Q)} \\
\le {}&C\big(\| q^{(n+1)}-q^{*}\| _{L^r(Q)}+\| {\rm{div} }(q^{(n+1)}-q^*)\| _{L^r(Q)}\big)\\
\le {}&C\big(\| u^{(n)}-u^*\| _{W^{2,1}_r(Q)}+T^{\frac{1}{2}-\frac{d}{2r}}\| m^{(n)} - m^*\| _{W^{2,1}_r(Q)}\big)\\
\le {}&CT^{\frac{1}{2}-\frac{d}{2r}} \big(\| u^{(n-1)}-u^*\| _{W^{2,1}_r(Q)}+\| m^{(n)} - m^*\| _{W^{2,1}_r(Q)}\big).
\end{align*}
From the previous estimates, \eqref{estimate_rate} follows.\par
Since $(U^{(n)},M^{(n)})$ converges to $(0,0)$, there exist $\ell>1$ and $n_0$ sufficiently large such that 
\begin{equation*}
	\| U^{(n_0+1)}\| _{W^{2,1}_r(Q)}+\| M^{(n_0+1)}\| _{W^{2,1}_r(Q)}\leq \frac{1}{\ell}\big(\| U^{(n_0)}\| _{W^{2,1}_r(Q)}+\| M^{(n_0)}\| _{W^{2,1}_r(Q)}\big).
\end{equation*}
By choosing $T$ so small that $CT^{\frac{1}{2}-\frac{d}{2r}}\le \frac{1}{\ell+\ell^2}$ and $T\leq \hat{T}_1$, by induction, we have for all $n\geq n_0$
\begin{equation*}
	\| U^{(n+2)}\| _{W^{2,1}_r(Q)}+\| M^{(n+2)}\| _{W^{2,1}_r(Q)}\leq (\frac{1}{\ell})^2\big(\| U^{(n)}\| _{W^{2,1}_r(Q)}+\| M^{(n)}\| _{W^{2,1}_r(Q)}\big).
\end{equation*}
\end{proof}

\begin{theorem}\label{converge rate 2}
	Let \textbf{(H1)}, \textbf{(H2)} and \textbf{(I1)} be  in force and $\hat{T}_2$ and $R$ be defined as in Theorem~\ref{thm:policy_iteration2}. Then, there exists a constant $C$, which depends only on the data of problem and remains bounded for all $T\in (0,\hat{T}_2]$, such that, if $(u^{(n)}, m^{(n)})$ is the sequence generated by the policy iteration method \textbf{(PI2)}, we have
\begin{equation}\label{mLs2}
\| m^{(n+1)} - m^*\| _{W^{2,1}_r(Q)} \le C \big(\| q^{(n+1)}-q^*\| _{L^r (Q)}+\| {\rm{div} }(q^{(n+1)}-q^*)\| _{L^r (Q)}\big),
\end{equation}
and
	\begin{equation} \label{uW21r2}
		\begin{split}
	 \| \tilde u^{(n+1)}-u^*\| _{W^{2,1}_r(Q)}
\le & CT^{\frac{1}{2}-\frac{d}{2r}}\big(\| \tilde u^{(n)}-u^*\| _{W^{2,1}_r(Q)}+\| m^{(n+1)}-m^*\| _{W^{2,1}_r(Q)}\big).
 	 \end{split}
	\end{equation}\end{theorem}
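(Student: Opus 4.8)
The plan is to transcribe the proof of Theorem~\ref{converge rate 1} almost verbatim, after observing that the Fokker--Planck step of \textbf{(PI2)} is identical in form to that of \textbf{(PI1)} and that the residual source in the HJB error equation retains the same algebraic structure despite the extra control-update step. For the Fokker--Planck estimate \eqref{mLs2}, I note that the population update uses $q^{(n+1)}=H_p(m^{(n)},D\tilde u^{(n)})$ in exactly the same way as in \textbf{(PI1)}. Hence $M^{(n+1)}:=m^{(n+1)}-m^*$ solves
\begin{equation*}
\partial_t M^{(n+1)}-\epsilon\Delta M^{(n+1)}-\textrm{div}(q^{(n+1)}M^{(n+1)})=\textrm{div}\big((q^{(n+1)}-q^*)m^*\big)
\end{equation*}
with $M^{(n+1)}(\cdot,0)=0$. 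Writing the right-hand side as $\textrm{div}(q^{(n+1)}-q^*)\,m^*+(q^{(n+1)}-q^*)Dm^*$ and applying Proposition~\ref{linear estim2}, whose hypotheses hold because the $L^r(Q)$ norms of $q^{(n+1)}$ and $\textrm{div}\,q^{(n+1)}$ are bounded uniformly in $n$ (Theorem~\ref{thm:policy_iteration2}), yields \eqref{mLs2} precisely as in the derivation of \eqref{mLs}.

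For the HJB estimate \eqref{uW21r2}, the key point is that $\tilde q^{(n+1)}=H_p(m^{(n+1)},D\tilde u^{(n)})$ arises from the Legendre transform, so that the Lagrangian term collapses to $L(m^{(n+1)},\tilde q^{(n+1)})=\tilde q^{(n+1)}\cdot D\tilde u^{(n)}-H(m^{(n+1)},D\tilde u^{(n)})$. Subtracting the HJB equation of \eqref{regularized-PI2} from the equation satisfied by $u^*$ in \eqref{MFG}, the error $U^{(n+1)}:=\tilde u^{(n+1)}-u^*$ solves
\begin{equation*}
-\partial_t U^{(n+1)}-\epsilon\Delta U^{(n+1)}+\tilde q^{(n+1)}DU^{(n+1)}=\mathcal{F},\qquad U^{(n+1)}(\cdot,T)=0,
\end{equation*}
with $\mathcal{F}=H(m^*,Du^*)-H(m^{(n+1)},D\tilde u^{(n)})+\tilde q^{(n+1)}(D\tilde u^{(n)}-Du^*)$. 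This is exactly the source term of Theorem~\ref{converge rate 1} with $q^{(n+1)}$ replaced by $\tilde q^{(n+1)}$ and $u^{(n)}$ by $\tilde u^{(n)}$; in particular the Hamiltonian contribution $H(m^{(n+1)},D\tilde u^{(n)})$ is unchanged, and the only difference is the population index carried by the control coefficient. Since $\tilde q^{(n+1)}$ is bounded, Proposition~\ref{linear estim} gives $\|U^{(n+1)}\|_{W^{2,1}_r(Q)}\le C\|\mathcal{F}\|_{L^r(Q)}$.

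It remains to bound $\mathcal{F}$. Because $m^*,m^{(n+1)}\in[1/K,K]$ and $|Du^*|,|D\tilde u^{(n)}|\le K$, the local Lipschitz bound \eqref{H_p Lip bound} together with the boundedness of $\tilde q^{(n+1)}$ controls $\|\mathcal{F}\|_{L^\infty(Q)}$ by $C\big(|\tilde u^{(n)}-u^*|^{(1)}_Q+|m^{(n+1)}-m^*|^{(1)}_Q\big)$. Passing to the $L^r$ norm produces a factor $T^{1/r}$, and invoking Lemma~\ref{Holder embedding2} (applicable since $U^{(n)}(\cdot,T)=0$ and $M^{(n+1)}(\cdot,0)=0$) to convert the $|\cdot|^{(1)}_Q$ seminorms into $W^{2,1}_r(Q)$ norms produces the extra factor $T^{1/2-(d+2)/(2r)}$; combining the exponents via $1/r+1/2-(d+2)/(2r)=1/2-d/(2r)$ gives \eqref{uW21r2}. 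The whole argument is a transcription of the \textbf{(PI1)} case, so I expect no genuine obstacle; the only step demanding care is verifying that the additional policy-update in \textbf{(PI2)} leaves the form of $\mathcal{F}$ intact, which is precisely the Legendre duality identity recorded above.
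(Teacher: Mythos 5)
Your proposal is correct and follows essentially the same route as the paper's own proof: the Fokker--Planck estimate is transcribed verbatim from Theorem~\ref{converge rate 1}, and for the HJB part you use the Legendre duality identity $L(m^{(n+1)},\tilde q^{(n+1)})=\tilde q^{(n+1)}\cdot D\tilde u^{(n)}-H(m^{(n+1)},D\tilde u^{(n)})$ to reduce the source term to the same form $\tilde{\cal F}$ as in the paper, then apply Proposition~\ref{linear estim}, the Lipschitz bound \eqref{H_p Lip bound}, and Lemma~\ref{Holder embedding2} exactly as the paper does. The exponent bookkeeping $1/r+1/2-(d+2)/(2r)=1/2-d/(2r)$ also matches.
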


\begin{proof}
Along the proof, the constant $C$ can change from line to line, but it is always independent of $n$ and remains bounded for all $T\in (0,\hat{T}_2]$. \par
Again, set $M^{(n+1)}=m^{(n+1)}-m^*$. We repeat the exact same reasoning as Theorem~\ref{converge rate 1} to obtain
\begin{equation*}
\| M^{(n+1)}\| _{W^{2,1}_r(Q)}\leq C(\| q^{(n+1)}-q^*\| _{L^r(Q)}+\| {\rm{div} }(q^{(n+1)}-q^*)\| _{L^r (Q)}).
\end{equation*}
The function  $\tilde u^{(n+1)}=\tilde u^{(n+1)}-u^*$ satisfies the equation
\begin{equation*}
-\partial_t \tilde u^{(n+1)}- {\epsilon \Delta} \tilde u^{(n+1)}+\tilde q^{(n+1)} D\tilde u^{n+1}=\tilde{{\cal F}}(x,t)
\end{equation*}
with $\tilde u^{(n+1)}(x,T)=0$, where 
\begin{align*}
\tilde{{\cal F}}(x,t)=&H(m^*,Du^*)-(\tilde q^{(n+1)}Du^*-L(m^{(n+1)},\tilde q^{(n+1)}))\\
=&H(m^*,Du^*)-H(m^{(n+1)},D\tilde u^{(n)})+\tilde q^{(n+1)}(D\tilde u^{(n)}-Du^*).
\end{align*}
Hence, recalling that $ \tilde q^{(n+1)} =H_p(m^{(n+1)},D\tilde u^{(n)})$ is bounded, again from Proposition~\ref{linear estim} we have
\begin{equation}\label{eq:conv1}
\| \tilde u^{(n+1)}\| _{W^{2,1}_r(Q)}\le  C\| \tilde{{\cal F}} \| _{L^r(Q)},
\end{equation}
and (following the same arguments as in Theorem~\ref{converge rate 1})
\begin{equation*}
\| \tilde{{\cal F}}(x,t)\| _{L^r(Q)} \le CT^{\frac{1}{2}-\frac{d}{2r}}\big(\| \tilde u^{(n)}-u^*\| _{W^{2,1}_r(Q)}+\| m^{(n+1)}-m^*\| _{W^{2,1}_r(Q)}\big).
\end{equation*}
Then we can get~\eqref{uW21r2} from~\eqref{eq:conv1}.
\end{proof}

\begin{corollary}
Denote $M^{(n)}=m^{(n)}-m^*$, $\tilde u^{(n)}=\tilde u^{(n)}-u^*$ for all $n\geq 1$. Under the same assumptions of Theorem~\ref{converge rate 2}, the following estimate holds
\begin{equation}
\begin{split}
&\| \tilde u^{(n+1)}\| _{W^{2,1}_r(Q)}+\| M^{(n+1)}\| _{W^{2,1}_r(Q)}\\
\le {}&CT^{\frac{1}{2}-\frac{d}{2r}} \big(\| \tilde u^{(n)}\| _{W^{2,1}_r(Q)}+\| M^{(n)}\| _{W^{2,1}_r(Q)}+\| \tilde u^{(n-1)}\| _{W^{2,1}_r(Q)}\big).
\end{split}
	\end{equation}
Moreover, there exists $\ell>1$, $n_0$ sufficiently large and $\Breve{\Breve{T}} \in (0,\hat{T}_2]$ sufficiently small such that, for all $T\in (0,\Breve{\Breve{T}} ]$, we have a linear rate of convergence
\begin{equation}\label{estimate_rate2}
\| \tilde u^{(n)}\| _{W^{2,1}_r(Q)}+\| M^{(n)}\| _{W^{2,1}_r(Q)}\leq (\frac{1}{\ell})^{n-n_0}\big(\| \tilde u^{(n_0)}\| _{W^{2,1}_r(Q)}+\| M^{(n_0)}\| _{W^{2,1}_r(Q)}\big).
\end{equation}
\end{corollary}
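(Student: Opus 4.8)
The plan is to follow the proof of Corollary~\ref{cor:rate} almost verbatim, because the two ingredients \eqref{mLs2} and \eqref{uW21r2} of Theorem~\ref{converge rate 2} have exactly the same shape as \eqref{mLs} and \eqref{uW21r}. Writing $M^{(n)}=m^{(n)}-m^*$ and $\tilde U^{(n)}=\tilde u^{(n)}-u^*$ (the quantities renamed in the statement) and setting $a_n=\|\tilde U^{(n)}\|_{W^{2,1}_r(Q)}+\|M^{(n)}\|_{W^{2,1}_r(Q)}$, the first goal is to establish the announced three-term recursion, after which the linear rate follows from a geometric comparison argument. Throughout, $C$ denotes a constant independent of $n$ that stays bounded for $T\in(0,\hat T_2]$.

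First I would translate the policy update of \textbf{(PI2)} into the identity $q^{(n+1)}=H_p(m^{(n)},D\tilde u^{(n)})$, the Legendre-duality counterpart of step \textbf{(iv)} (for $R$ large the constraint is inactive). This is the one genuinely \textbf{(PI2)}-specific point: the drift governing $m^{(n+1)}$ is now built from the value function $\tilde u^{(n)}$ produced by the HJB equation \eqref{eq:PI2_alg_HJ}, rather than from $u^{(n)}$ as in \textbf{(PI1)}. Using \textbf{(H1)} together with $m^{(n)},m^*\in[1/K,K]$ and $|D\tilde u^{(n)}|,|Du^*|\le K$, I would bound
$$\|q^{(n+1)}-q^*\|_{L^r(Q)}\le CT^{\frac1r}\big(|\tilde U^{(n)}|^{(1)}_Q+|M^{(n)}|^{(1)}_Q\big)\le CT^{\frac12-\frac{d}{2r}}\big(\|\tilde U^{(n)}\|_{W^{2,1}_r(Q)}+\|M^{(n)}\|_{W^{2,1}_r(Q)}\big),$$
via Lemma~\ref{Holder embedding2}, and, differentiating $q^{(n+1)}$ as in \eqref{pdq_j},
$$\|\textrm{div}(q^{(n+1)}-q^*)\|_{L^r(Q)}\le C\big(\|\tilde U^{(n)}\|_{W^{2,1}_r(Q)}+T^{\frac12-\frac{d}{2r}}\|M^{(n)}\|_{W^{2,1}_r(Q)}\big),$$
the point being that the term containing $H_{p_ip_j}\,\partial^2_{x_ix_j}\tilde u^{(n)}$ carries $\|\tilde U^{(n)}\|_{W^{2,1}_r(Q)}$ with no extra power of $T$.

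Feeding these two bounds into \eqref{mLs2} gives $\|M^{(n+1)}\|_{W^{2,1}_r(Q)}\le C(\|\tilde U^{(n)}\|_{W^{2,1}_r(Q)}+T^{\frac12-\frac{d}{2r}}\|M^{(n)}\|_{W^{2,1}_r(Q)})$. The key manipulation is then to apply \eqref{uW21r2} at level $n$ to replace $\|\tilde U^{(n)}\|_{W^{2,1}_r(Q)}$ by $CT^{\frac12-\frac{d}{2r}}(\|\tilde U^{(n-1)}\|_{W^{2,1}_r(Q)}+\|M^{(n)}\|_{W^{2,1}_r(Q)})$, which recovers the smallness factor in front of the lagged value-function error and yields $\|M^{(n+1)}\|_{W^{2,1}_r(Q)}\le CT^{\frac12-\frac{d}{2r}}(\|\tilde U^{(n-1)}\|_{W^{2,1}_r(Q)}+\|M^{(n)}\|_{W^{2,1}_r(Q)})$. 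Substituting this bound on $\|M^{(n+1)}\|_{W^{2,1}_r(Q)}$ into \eqref{uW21r2} and using $CT^{\frac12-\frac{d}{2r}}<1$ for small $T$, I would add the resulting estimates for $\|\tilde U^{(n+1)}\|_{W^{2,1}_r(Q)}$ and $\|M^{(n+1)}\|_{W^{2,1}_r(Q)}$ to obtain the first assertion; bounding $\|\tilde U^{(n-1)}\|_{W^{2,1}_r(Q)}\le a_{n-1}$ gives the scalar recursion $a_{n+1}\le CT^{\frac12-\frac{d}{2r}}(a_n+a_{n-1})$.

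For the rate, I would first invoke Theorem~\ref{thm:policy_iteration2} to get $a_n\to0$, so that there exist $\ell>1$ and $n_0$ with $a_{n_0+1}\le\frac1\ell a_{n_0}$. Choosing $\Breve{\Breve{T}}\le\hat T_2$ so small that $CT^{\frac12-\frac{d}{2r}}\le\frac1{\ell+\ell^2}$ for all $T\in(0,\Breve{\Breve{T}}]$, a short induction on $a_{n+1}\le CT^{\frac12-\frac{d}{2r}}(a_n+a_{n-1})$ propagates $a_{n_0+k}\le\ell^{-k}a_{n_0}$, which is exactly \eqref{estimate_rate2}. I expect the only delicate point to be the bookkeeping: one must check that the extra intermediate policy $\tilde q^{(n)}=H_p(m^{(n)},D\tilde u^{(n-1)})$ of \textbf{(PI2)} introduces no additional error terms, and that the lagged index $n-1$ (rather than $n$) ends up on $\tilde U$, since this is precisely what lets the smallness of $T$ close the comparison argument.
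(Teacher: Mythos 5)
Your proposal is correct and follows essentially the same route as the paper: identify $q^{(n+1)}=H_p(m^{(n)},D\tilde u^{(n)})$, bound $\|q^{(n+1)}-q^*\|_{L^r}$ and $\|\mathrm{div}(q^{(n+1)}-q^*)\|_{L^r}$ via \textbf{(H1)} and Lemma~\ref{Holder embedding2}, feed these into \eqref{mLs2}, use \eqref{uW21r2} at level $n$ to push the smallness factor onto the lagged term $\tilde u^{(n-1)}-u^*$, and close with the same $\ell$, $n_0$, $CT^{\frac12-\frac{d}{2r}}\le\frac{1}{\ell+\ell^2}$ induction. The paper's own proof is just an abbreviated version of this, deferring the details to Corollary~\ref{cor:rate}.
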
 

\begin{proof} Along the proof, the constant $C$ can change from line to line, but it is always independent of $n$ and remains bounded for all $T\in (0,\hat{T}_2]$. From the results of Theorem~\ref{converge rate 2} we have
	\begin{equation}
\begin{split}
&\| \textrm{div} q^{(n+1)}-\textrm{div} q^*\| _{L^r(Q)} \\
\leq\,\, &C(\| \tilde u^{(n)}-u^*\| _{W^{2,1}_r(Q)}+T^{\frac{1}{2}-\frac{d}{2r}}\| m^{(n)}-m^*\| _{W^{2,1}_r(Q)}\big),
\end{split}
\end{equation}
	and
\begin{align*}
	&\| m^{(n+1)} - m^*\| _{W^{2,1}_r(Q)} \\
	\le\,\, &C\big(\| q^{(n+1)}-q^{*}\| _{L^r(Q)}+\| \textrm{div}(q^{(n+1)}-q^*)\| _{L^r(Q)}\big)\\
	\le\,\, &C\big(\| \tilde u^{(n)}-u^*\| _{W^{2,1}_r(Q)}+T^{\frac{1}{2}-\frac{d}{2r}}\| m^{(n)} - m^*\| _{W^{2,1}_r(Q)}\big)\\
	\le\,\, &CT^{\frac{1}{2}-\frac{d}{2r}} \big(\| \tilde u^{(n-1)}-u^*\| _{W^{2,1}_r(Q)}+\| m^{(n)} - m^*\| _{W^{2,1}_r(Q)}\big).
\end{align*}
From the previous estimates, it follows~\eqref{estimate_rate2}.\par
Since $(\tilde u^{(n)},M^{(n)})$ converges to $(0,0)$, there exists a $\ell>1$, $n_0$ sufficiently large such that 
\begin{equation*}
	\| \tilde u^{(n_0+1)}\| _{W^{2,1}_r(Q)}+\| M^{(n_0+1)}\| _{W^{2,1}_r(Q)}
	\leq \frac{1}{\ell}\big(\| \tilde u^{(n_0)}\| _{W^{2,1}_r(Q)}+\| M^{(n_0)}\| _{W^{2,1}_r(Q)}\big),
\end{equation*}
by choosing $T$ so small that $CT^{\frac{1}{2}-\frac{d}{2r}}\le \frac{1}{\ell+\ell^2}$ and $T\leq \hat{T}_2$, by induction, we have for all $n\geq n_0$
\begin{equation*}
	\| \tilde u^{(n+2)}\| _{W^{2,1}_r(Q)}+\| M^{(n+2)}\| _{W^{2,1}_r(Q)}\leq (\frac{1}{\ell})^2\big(\| \tilde u^{(n)}\| _{W^{2,1}_r(Q)}+\| M^{(n)}\| _{W^{2,1}_r(Q)}\big).
\end{equation*}
\end{proof}

\section{Numerical simulations}\label{sec:numerical}
In this section, we illustrate with numerical examples the two policy iteration methods analyzed in the previous sections. To this end we rely on a finite difference scheme introduced and analyzed in~\cite{ad,acd} for MFG PDE systems. 
We consider the following examples.

\vskip 6pt
\noindent\textbf{Example 1:} We first consider the following one dimensional example in which the agents are encouraged to move towards one of two possible targets. They are penalized at the terminal time based on the distance to the nearest target. This is reminiscent of the min-LQG MFG of~\cite{salhab2017collectivechoice}, except that we do not consider mean field interactions that encourage the agents to follow the mean position of the population. Instead, the dynamics of a typical agent are subject to congestion effects in the spirit of~\cite{achdouporretta2018}: it requires more effort to move in a crowded region than in a non-crowded region. To be consistent with the above theoretical analysis, we consider that the domain is the one-dimensional torus $\mathbb{T} = \mathbb{R} / \mathbb{Z}$. The two targets are located at $0.3$ and $0.7$. The Hamiltonian is:
\begin{align*}
	H(m,Du)
	=&\sup_{q\in\mathbb{R}^d}\left\{q\cdot Du- \frac12(1+4m)^{\beta}\vert q\vert ^2 -\zeta m\right\}\\
	=&\frac{1}{2(1+4m)^{\beta}}\vert Du\vert ^2-\zeta m\,.
\end{align*}
Here, the argmax is given by $q^{*}(x,t)=\frac{Du(x,t)}{(1+4m(x,t))^{\beta}}$ in $Q$, and $\beta, \zeta$ are positive constants.
The last term corresponds to a crowd aversion cost which discourages the agents from being in a very crowded region (independently of whether they move or not). 
We take a uniform distribution over $[0.375,0.625]$ for $m_0$. 

The corresponding MFG PDE system is:
\begin{equation}\label{Example1}
\begin{cases}
-\partial_tu-0.05{ \Delta} u+\frac{1}{2(1+4m)^{\beta}}\vert Du\vert ^2-\zeta m=0 & \text{ in }Q\\
\partial_tm -0.05{\Delta}  m-\textrm{div}(\frac{mDu}{(1+4m)^{\beta}})=0 & \text{ in }Q\\
u_T(x)=10\min \{(x-0.3)^2,(x-0.7)^2\} & \text{ in }\mathbb{T}\\
m_0(x)=4 \text{ for } x\in [0.375,0.625],\,\,m_0(x)= 0  &\text{ otherwise }.
\end{cases}
\end{equation}

We implement the two policy iteration methods on a finite-difference approximation of the above PDE system. We fix a grid $\mathcal{G}$ on ${\mathbb{T}^d}$. Then, we denote by $U, M$ and ${\mathsf{Q}}$ the vectors on $\mathcal{G}$ approximating respectively the solution and the policy. We will use the symbol $\sharp$ to denote suitable discretizations of the linear differential operators at the grid nodes. Here we use uniform grids and the centered second order finite differences for the discrete Laplacian, whereas the Hamiltonian and the divergence term in the FP equation are both computed via the Engquist-Osher numerical flux for conservation laws as in \cite{ccg}. To be more precise, in the present example which is in dimension $d=1$, we consider a uniform discretization of ${\mathbb{T}^d}$ with $I$ nodes $x_i = i \, h$, for $i=0,\dots,I-1$, where $h=1/I$ is the space step. We then introduce the discrete operators:
\begin{align*}
	({\Delta}_\sharp U)_i &=\frac{1}{h^2}\left(U_{[i-1]}-2U_i+U_{[i+1]}\right)\,,
	\\
	(D_\sharp U)_i &=\left(D_L U_i\,,\,D_R U_i\right)=\frac{1}{h}\left( U_i-U_{[i-1]}\,,\,U_{[i+1]}-U_i\right)\,,
\end{align*}
where the index operator $[\cdot]=\left\{(\cdot +I)\,mod\, I\right\}$ accounts for the periodic boundary conditions.  When updating the policy, we have
$$
	{\mathsf{Q}}_i=\left({\mathsf{Q}}_{i,L}\,,\,{\mathsf{Q}}_{i,R}\right)=\frac{1}{(1+4M_i)^{\beta}}\left( D_L U_i\,,\,D_R U_i\right)\,.
$$
Using the notation $(\cdot)^+=\max\left\{\cdot,0\right\}$ and $(\cdot)^-=\min\left\{\cdot,0\right\}$ for the positive and negative part respectively, we denote ${\mathsf{Q}}_\pm=({\mathsf{Q}}_L^+,{\mathsf{Q}}_R^-)$, and we have
$$
	(\vert {\mathsf{Q}}_\pm\vert ^2)_i=\left( {\mathsf{Q}}_{i,L}^+\right)^2+\left({\mathsf{Q}}_{i,R}^-\right)^2\,.
$$
The discrete divergence operator is such that:
\begin{align*}
	\left(\textrm{div}_\sharp(M\,{\mathsf{Q}})\right)_i=&\frac{1}{h}
	\left( M_{[i+1]}{\mathsf{Q}}_{[i+1],L}^+ - M_i {\mathsf{Q}}_{i,L}^+ \right)\\
	+& \frac{1}{h}\left(
	M_i {\mathsf{Q}}_{i,R}^- -M_{[i-1]} {\mathsf{Q}}_{[i-1],R}^-
	\right)\,.
\end{align*}

For the time discretization, we employ an implicit Euler method for both the time-forward FP equation and the time-backward HJB equation. To this end, we introduce a uniform grid on the interval $[0,T]$ with $N+1$ nodes $t_n=n \,{\Delta} t$, for $n=0,\dots, N$, and time step ${\Delta} t=T/N$. Then, we denote by $U_n, M_n$ and ${\mathsf{Q}}_n$ the vectors on $\mathcal{G}$ approximating respectively the solution and the policy at time $t_n$. In particular, we set on $\mathcal{G}$ the initial condition $M_{0,i}=m_0(x_i) / \sum_{j} h m_0(x_j)$ and the final condition $U_N=u_T(\cdot)$. 

The policy iteration algorithm \textbf{(PI1)} for the fully discretized system is the following: Given an initial guess ${\mathsf{Q}}^{(0)}_n:\mathcal{G}\to\mathbb{R}^{2d}$ for $n=0,\dots, N-1$, initial and final data $M_0,\,U_N:\mathcal{G}\to\mathbb{R}$, iterate on $k\geq 0$:

\begin{itemize}
	\item[(i)]  Solve on $\mathcal{G}$
	$$
	\left\{
	\begin{array}{l}
	 M^{(k)}_{n+1}-{\Delta} t\left(0.05{\Delta}_\sharp M^{(k)}_{n+1}+\textrm{div}_\sharp(M^{(k)}_{n+1}\,{\mathsf{Q}}^{(k)}_{n})\right)=M^{(k)}_{n}, \quad n=0,\dots, N-1\\
	 M^{(k)}_{0}=M_{0}    
	\end{array}
	\right.
	$$
	\item[(ii)] Solve on $\mathcal{G}$
	$$
			\left\{
	\begin{array}{l}
U_{n}^{(k)}- {\Delta} t\left(	 0.05{\Delta}_\sharp U^{(k)}_{n}-{\mathsf{Q}}^{(k)}_{n,\pm}\cdot D_\sharp U^{(k)}_{n}\right)\\
\hskip 19pt= U_{n+1}^{(k)}+{ \Delta} t\left(\frac12(1+4M^{(k)}_{n+1})^{\beta}\vert {\mathsf{Q}}^{(k)}_{n,\pm}\vert ^2+\zeta M^{(k)}_{n+1}\right), \quad n=0,\dots, N-1
\\
	 U^{(k)}_{N}=U_{N}    
	\end{array}
	\right.
	  $$
	\item[(iii)] Update the policy
	${\mathsf{Q}}^{(k+1)}_n=\frac{D_\sharp U^{(k)}_n}{(1+4M^{(k)}_{n+1})^{\beta}}$ on $\mathcal{G}$ for $n=0,\cdots, N-1$. %
\end{itemize}

Consistently with our theoretical convergence result, in the implementation we do not put any bound on the control. For the following results, we take $\beta=1.5$, $\zeta=1$ and $T = 1$ for the final horizon. We used a number of nodes $I=200$ in space and $N=200$ in time. The initial policy was set to 
${\mathsf{Q}}^{(0)}_n\equiv(0,0)$ on $\mathcal{G}$ for all $n$. Here we present results for \textbf{(PI1)}. %
\par
 In Figure~\ref{Fig1}, we report the time evolution of the density, by plotting, for several fixed $n$, the solution density $M_n$ and the policy ${\mathsf{Q}}_n={\mathsf{Q}}_{n,L}^+ +{\mathsf{Q}}_{n,R}^-$. We can see that the distribution splits into two parts, one moving towards the left target and one moving towards the right target. However, due to the congestion cost as well as the crowd aversion cost, each part can not concentrate exactly on the target.  \par
 In Figure~\ref{Fig2}, we report results on the convergence with respect to the number of iterations: the residuals of the discrete MFG system, as well as the discrete $L^\infty$ distance between ${\mathsf{Q}}^{(k)}, M^{(k)}$ and $U^{(k)}$ computed by the policy iteration and the final solution ${\mathsf{Q}}^{*}, M^{*}$ and $U^{*}$ from the fixed point iteration algorithm. Here we use the  fixed point iteration method to obtain a benchmark solution. This algorithm has been previously used for solving mean field games with non-separable Hamiltonians or mean field type control problems e.g. in~\cite{achdou2020}. The details are provided below. Here we observe that the solution via our policy iteration method is consistent with the benchmark solution. Moreover, consistently with our theoretical findings (see Section~\ref{sec:estimate}), the convergence rate is linear, except for the first few iterations and after a lower bound is reached due to the limitations on the approximation accuracy of the discrete system.
\begin{figure}[h!]
\begin{center}
\begin{tabular}{cc}
\includegraphics[width=.45\textwidth]{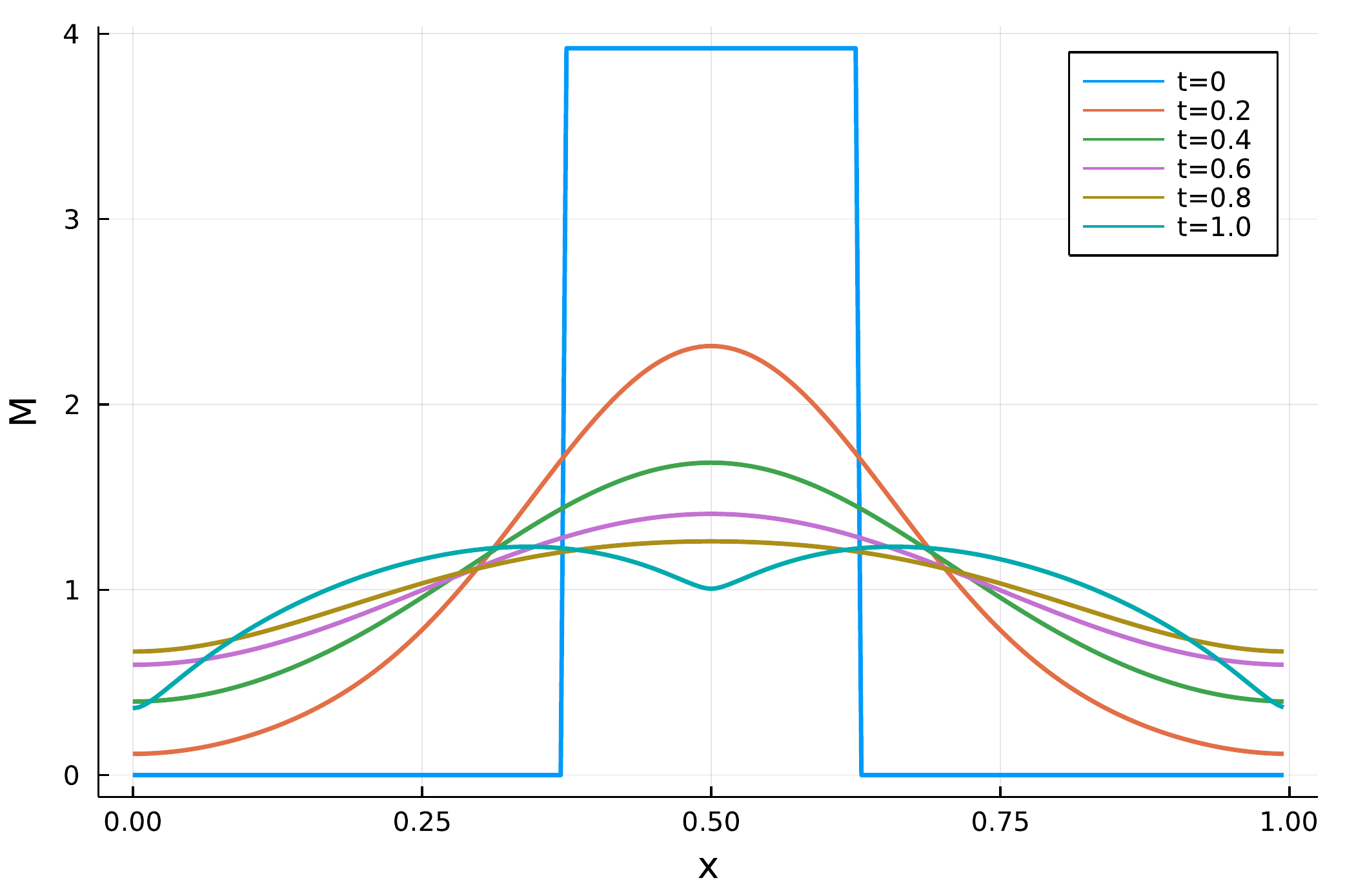} &
\includegraphics[width=.45\textwidth]{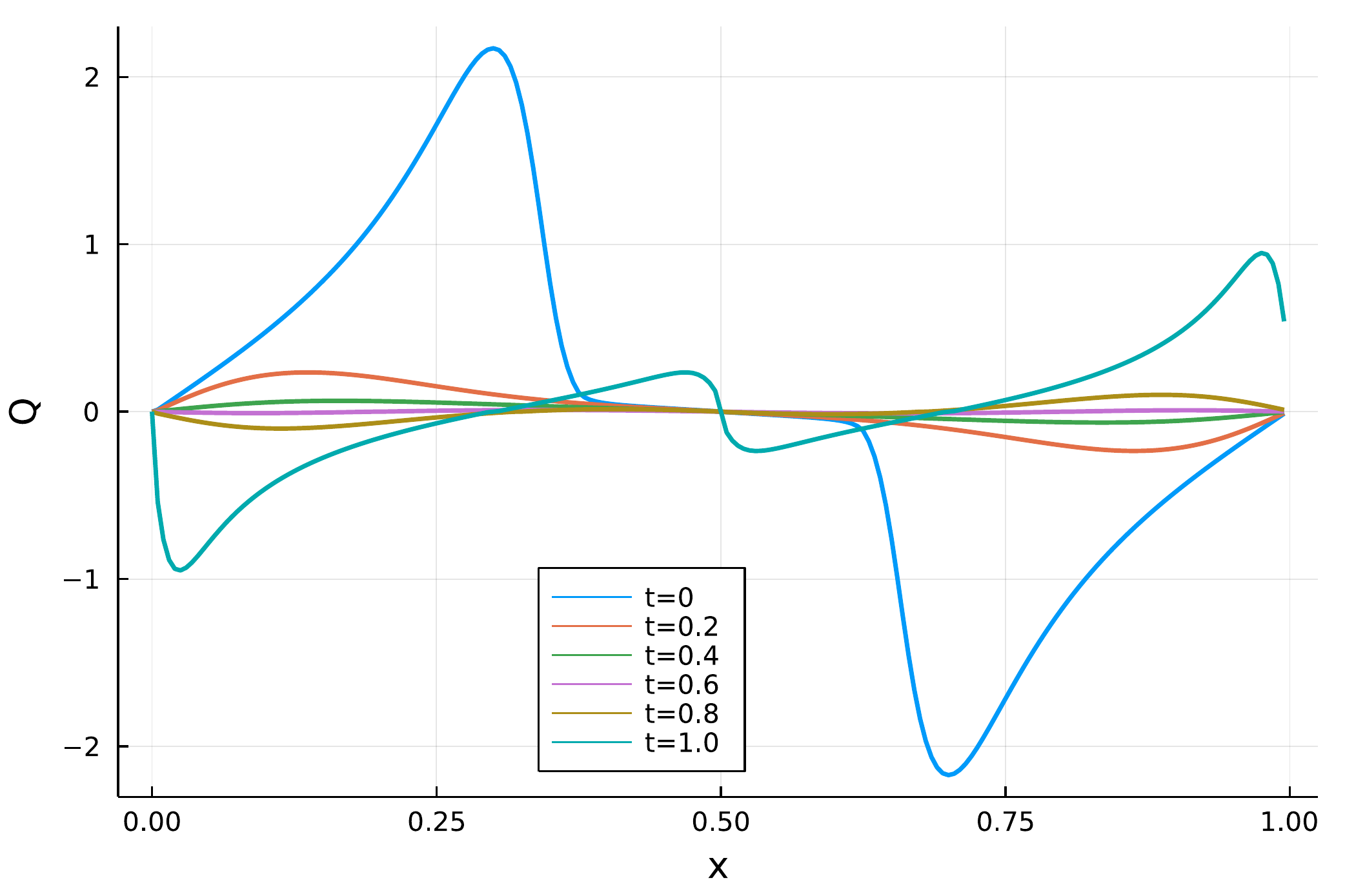} \\
(a)&(b)
\end{tabular}
\end{center}
\caption{Example 1. Solution for the MFG system~\eqref{Example1} obtained with policy iteration \textbf{(PI1)}. (a) The density $M$ and  (b) the policy ${\mathsf{Q}}={\mathsf{Q}}^+_L+{\mathsf{Q}}^-_R$ at several time steps.}\label{Fig1}
\end{figure}

\begin{figure}[h!]
\begin{center}
\begin{tabular}{cc}
\includegraphics[width=.45\textwidth]{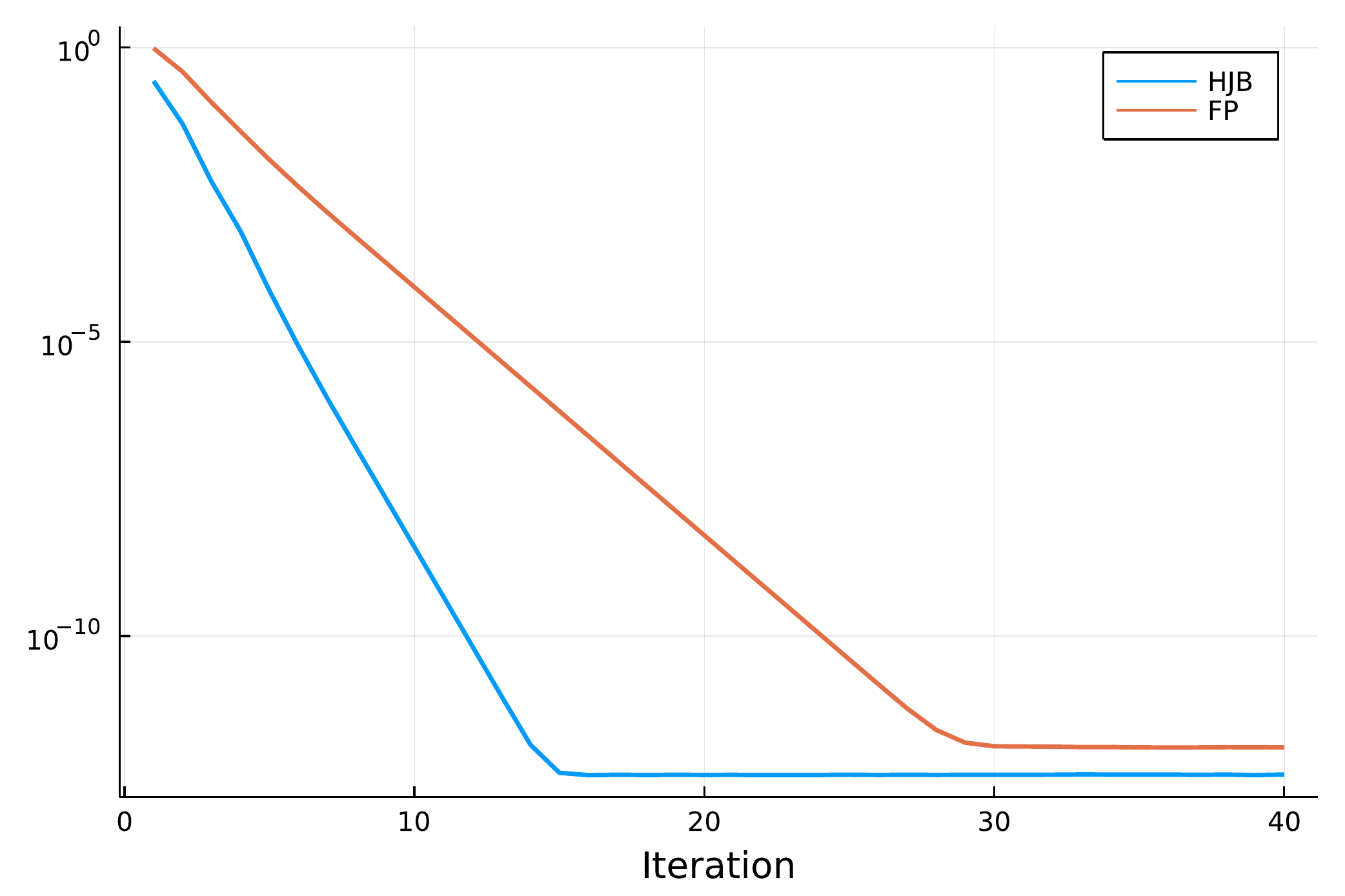} &
\includegraphics[width=.45\textwidth]{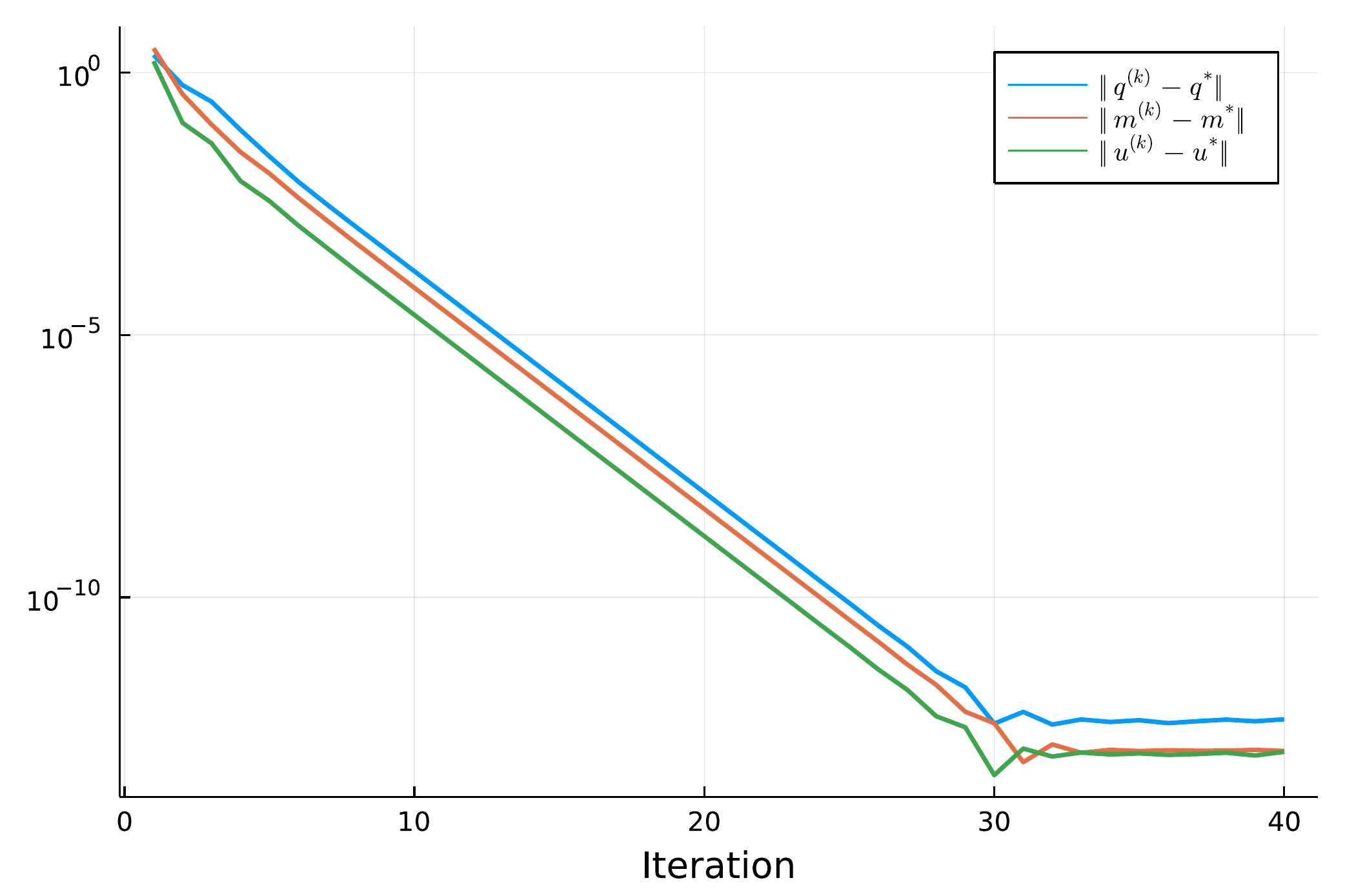} \\
(a)&(b)
\end{tabular}
\end{center}
\caption{Example 1. Convergence of policy iteration \textbf{(PI1)} for the MFG system~\eqref{Example1}. (a) Residuals of MFG system, (b) $L^\infty$ distance between ${\mathsf{Q}}^{(k)}, M^{(k)}$ and $U^{(k)}$ from policy iteration and the final solution ${\mathsf{Q}}^{*}, M^{*}$ and $U^{*}$ from fixed point algorithm.}\label{Fig2}
\end{figure}

We now give the details of the fixed point iteration algorithm for solving the discrete MFG system with a non-separable Hamiltonian. It iterates over the distribution and the value function. The main difference with the policy iteration method \textbf{(PI1)} is that at each iteration, we solve an HJB equation instead of solving a linear equation with a given control. As explained below, for this step we rely on Newton method. \par
The main iteration of the fixed point method is the following outer loop: Given an initial guess $M^{(0)}_n:\mathcal{G}\to\mathbb{R}^{2d}$ for $n=1,\cdots, N$, and $U^{(0)}_n:\mathcal{G}\to\mathbb{R}^{2d}$ for $n=0,\dots, N-1$, iterate on $k\geq 1$ up to convergence,

\begin{itemize}
	\item[(i)]  Solve on $\mathcal{G}$:
	$$
	\left\{
	\begin{array}{l}
	 M^{(k)}_{0}=M_{0}    
	 \\
	 \frac{M^{(k)}_{n+1}-M^{(k)}_{n}}{{\Delta} t}-0.05{\Delta}_\sharp M^{(k)}_{n+1}-\textrm{div}_\sharp(M^{(k)}_{n+1}\,\frac{DU^{(k-1)}_{n}}{(1+4M^{(k-1)}_{n+1})^{\beta}})=0, 
	 \\ \hfill \quad n=0,\dots, N-1
	\end{array}
	\right.
	$$
	\item[(ii)] Solve on $\mathcal{G}$:
	$$
			\left\{
	\begin{array}{l}
	 U^{(k)}_{N}=U_{N}    
	 \\
		-\frac{U_{n+1}^{(k)}-U_{n}^{(k)}}{{ \Delta} t}- 0.05{\Delta}_\sharp U^{(k)}_{n}+\frac{\vert D_\sharp U^{(k)}_{n}\vert ^2}{2(1+4M^{(k)}_{n+1})^{\beta}} - \zeta M^{(k)}_{n+1} = 0, 
	\\ \hfill \quad n=N-1,\dots, 0.
	\end{array}
	\right.
	  $$
	
\end{itemize}

We use a forward time marching method and backward time marching method respectively for step (i) and (ii). In step (ii) we need to solve a nonlinear system for every time step $n$. We do this by Newton method, which consists in the following inner loop. 
      For given $n$ and $k$, set initial guess $\tilde{U}^{(\tilde{k}=0)}_{n}=U^{(k-1)}_{n}$, and then iterate on $\tilde{k}\geq 0$:
\begin{itemize}
	\item[($ii_1$)]  Compute the residual of HJB system:
	$$
	\mathcal{F}^{(\tilde{k})}_n(\tilde{U}^{(\tilde{k})}_{n}) = -\frac{U_{n+1}^{(k)}-\tilde{U}^{(\tilde{k})}_{n}}{{\Delta} t}-0.05{\Delta}_\sharp \tilde{U}^{(\tilde{k})}_{n}+\frac{\vert D_\sharp\tilde{U}^{(\tilde{k})}_{n}\vert ^2}{2(1+4M^{(k)}_{n+1})^{\beta}}-\zeta M^{(k)}_{n+1},
	$$
	\item[($ii_2$)] Compute the Jacobian matrix:
	$$
	\mathcal{J}^{(\tilde{k})}_n(\tilde{U}^{(\tilde{k})}_{n}) = \frac{1}{{\Delta} t}I - 0.05{\Delta}_\sharp + \frac{D_\sharp\tilde{U}^{(\tilde{k})}_{n}}{(1+4M^{(k)}_{n+1})^{\beta}} \cdot D_\sharp,
	$$
	\item[($ii_3$)] Update:
	$\tilde{U}^{(\tilde{k}+1)}_{n} = \tilde{U}^{(\tilde{k})}_{n} + (\mathcal{J}^{(\tilde{k})}_n)^{-1} (-\mathcal{F}^{(\tilde{k})}_n)$. 
	\end{itemize}
	
For step ($ii_3$), instead of computing the inverse of the Jacobian matrix $\mathcal{J}^{(\tilde{k})}_n$, an alternative method is to first solve a linear system to find $(\tilde{U}^{(\tilde{k}+1)}_{n} - \tilde{U}^{(\tilde{k})}_{n})$ and then deduce $\tilde{U}^{(\tilde{k})}_{n}$ from here. 

The aforementioned benchmark solution ${\mathsf{Q}}^{*}, M^{*}$ and $U^{*}$ is obtained by running the fixed point method until convergence (up to numerical approximations). 

We perform  some numerical experiments on the maximal time horizon $T$ with which the algorithm converges. The results are listed in Table~\ref{table-maximumT} with different values of $\beta$ and $\zeta$, without changing the step size ${\Delta} t$, ${\Delta} x$ or any other parameters. In some cases the algorithm converges even when $T=50$. The fact that convergence depends heavily on the value of the constant  $\zeta$ is consistent with the theoretical findings of \cite{ct}.

\begin{table}[!h]
	\centering
	 \begin{tabular}{| c | c | c | c | c | }
	   \hline
		$\zeta \backslash \beta$ & 1.5 & 1.2 & 1.0 & 0.8\\
		\hline\hline
		0.8 & $> 50 $&$< 3.4$ &$ <1.6 $&$ <1.1$\\
		\hline
		0.6 & $> 50$ &$ > 50$ & $<3.0$ &$ <1.5$ \\
		\hline
		0.4 & $> 50$ & $> 50 $&$ > 50$&$ <4.2$ \\
		\hline
		0.2 & $> 50$ &$> 50$ &$> 50 $&$ > 50$ \\
		\hline
	\end{tabular}
\caption{Maximum $T$ with which policy iteration algorithm \textbf{(PI1)} converges, with different $\beta$ and $\zeta$}\label{table-maximumT}
\end{table}

\vskip 6pt
\noindent\textbf{Example 2:} We now give an example in dimension $d=2$ in which the domain is $\mathbb{T}^2$. The running cost represents congestion effects, but in this example the Hamiltonian $H(m,Du)$ is singular at $m=0$. There is a terminal cost that encourages the agents to move towards some sub-regions of the domain. The initial distribution is a truncated Gaussian distribution centered around $(0.25, 0.25)$. The MFG PDE system is:
\begin{equation}\label{example2}
\begin{cases}
	-\partial_tu-0.3{\Delta} u+\frac{1}{2m^{1/2}}\vert Du\vert ^2=0 & \text{ in }Q\\
	\partial_tm -0.3{\Delta}  m-\textrm{div}(\frac{mDu}{m^{1/2}})=0 & \text{ in }Q\\
	u_T(x_1,x_2)=1.2\cos(2\pi x_1)+\cos(2\pi x_2) & \text{ in }\mathbb{T}^2\\
	m_0(x_1,x_2)=\mathsf{C}\exp\{-10[(x_1-0.25)^2+(x_2-0.25)^2]\}  &\text{ in }\mathbb{T}^2,
\end{cases}
\end{equation}
where $\mathsf{C}$ is a constant such that $\int_{\mathbb{T}^2}m_0(x)dx=1$. We set the terminal time $T=0.5$. The finite-difference scheme described above can be adapted to this two-dimensional example in a straightforward way. See e.g.~\cite{ad,acd} for more details. For the numerical results provide below, we used $I=50$ nodes in each space dimension and $N=50$ nodes in time. 

We compare the two policy iteration methods that we proposed, namely \textbf{(PI1)} and \textbf{(PI2)}. For both methods, we used the initial policy ${\mathsf{Q}}^{(0)}_n\equiv(0,0,0,0)$ on $\mathcal{G}$ for $n=0,\dots, N-1$. In Figure~\ref{Fig3}, we give residuals of MFG system and the discrete $L^\infty$ distance between ${\mathsf{Q}}^{(k)}, M^{(k)}$ and $U^{(k)}$ at each iteration and the final solution ${\mathsf{Q}}^{*}, M^{*}$ and $U^{*}$ from the fixed point iteration algorithm. We see that it takes about 37 iterations with \textbf{(PI1)} to decrease $\displaystyle\max_{n,i,j}\vert  M^{(k+1)}_{n,i,j}-M^{(k)}_{n,i,j}\vert $ to $ 10^{-8}$ whereas it takes only 29 iterations with algorithm \textbf{(PI2)}. In Figure~\ref{Fig4},  we report the contours of density $M_n$ at different time $t$ with the algorithm \textbf{(PI2)}. Since both methods yield similar results, we omit the contours of $M_n$ obtained with the algorithm \textbf{(PI2)}.

\begin{figure}[h!]
\begin{center}
\begin{tabular}{cc}
\includegraphics[width=.45\textwidth]{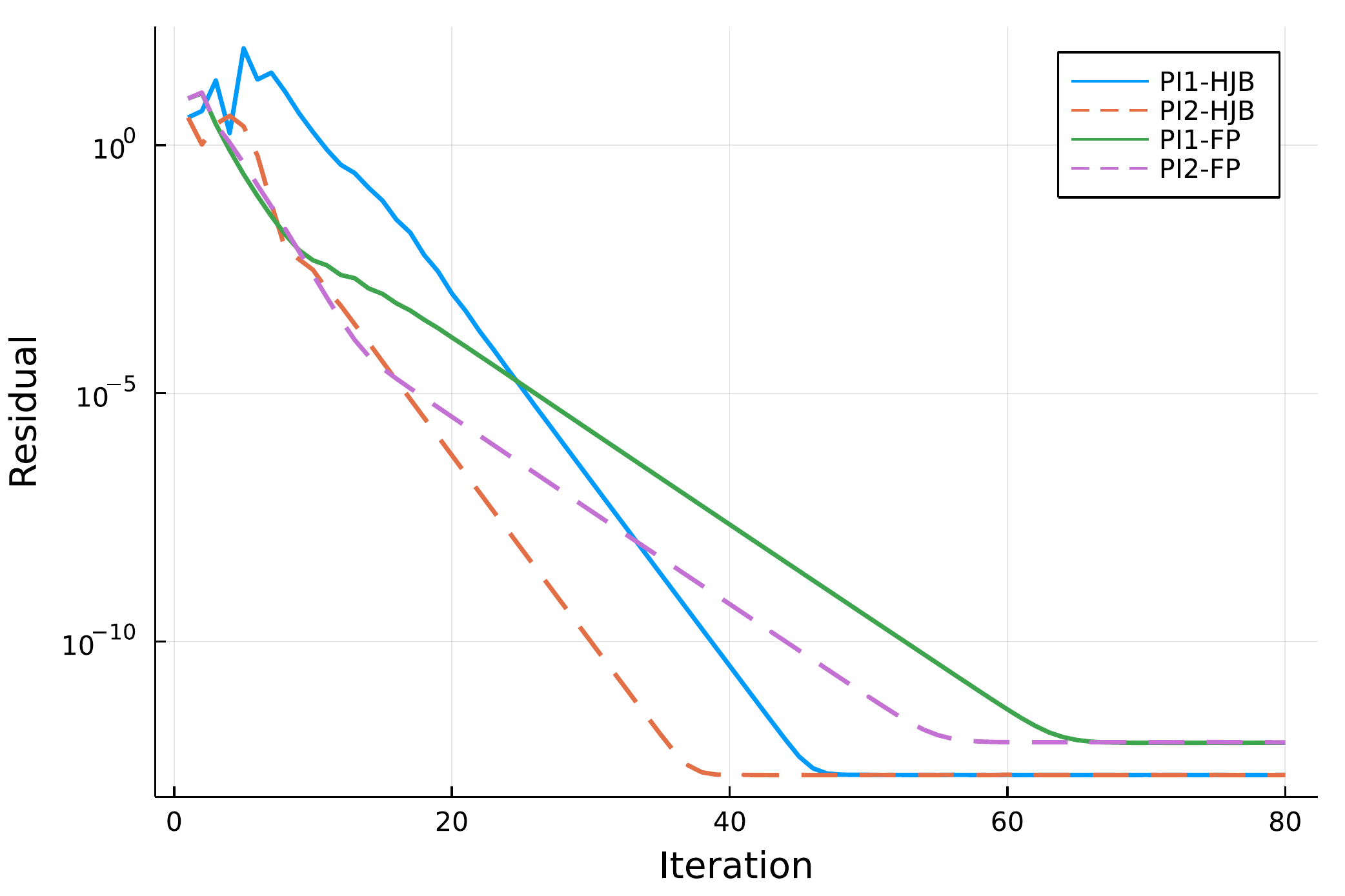} &
\includegraphics[width=.45\textwidth]{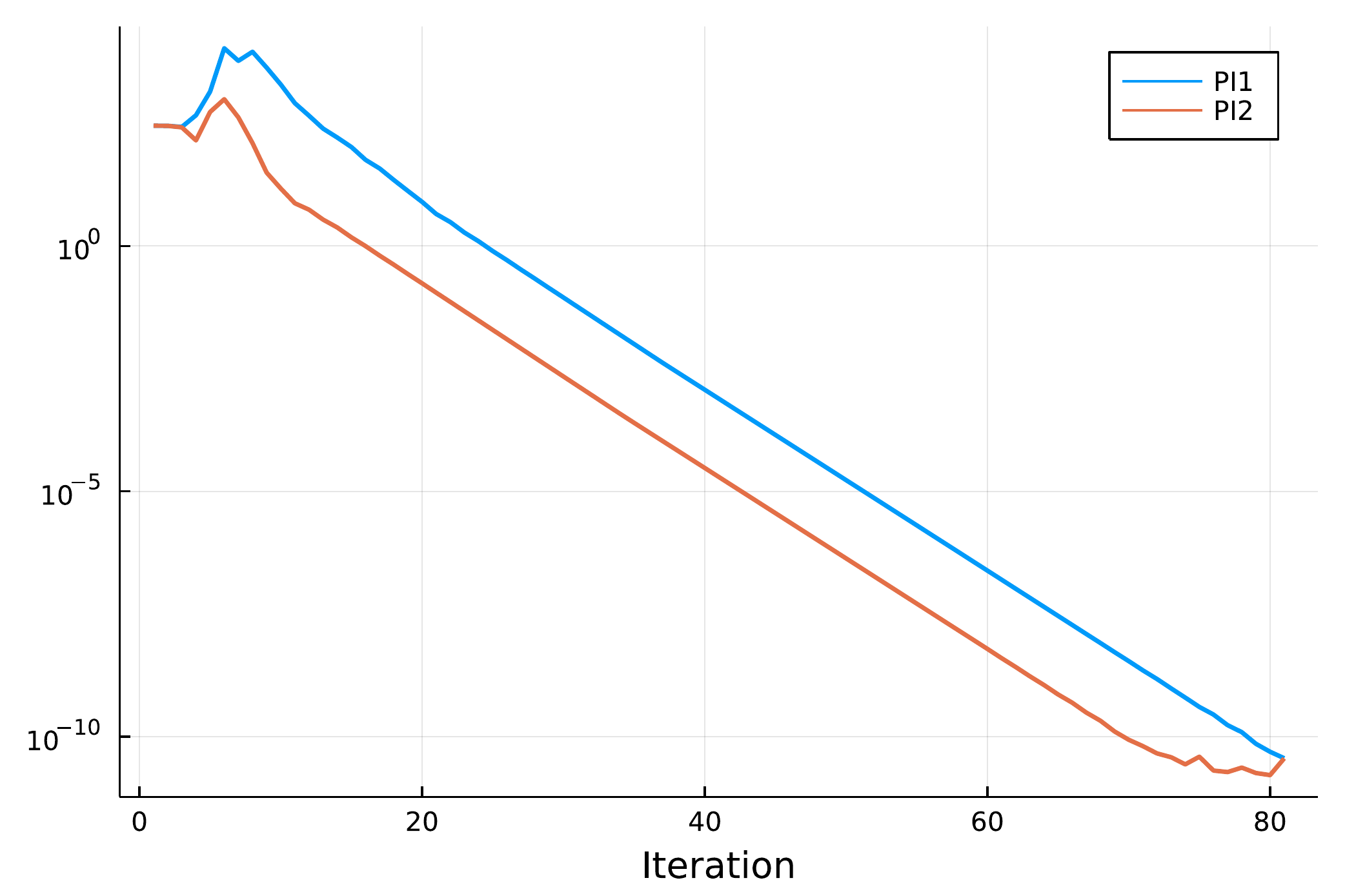} \\
residual & $\Vert q^{(k)}-q^{*} \Vert$ \\\\
\includegraphics[width=.45\textwidth]{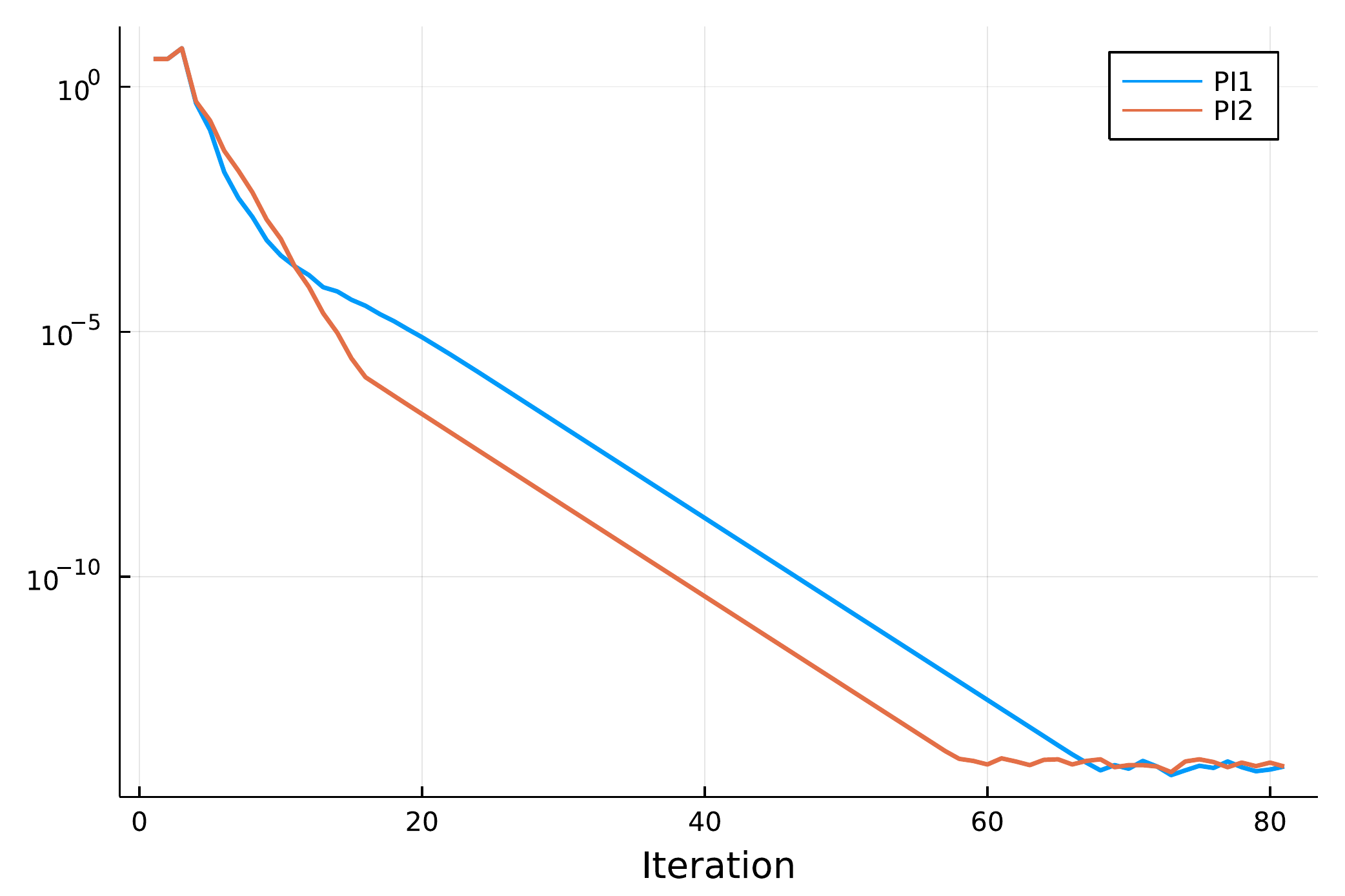} &
\includegraphics[width=.45\textwidth]{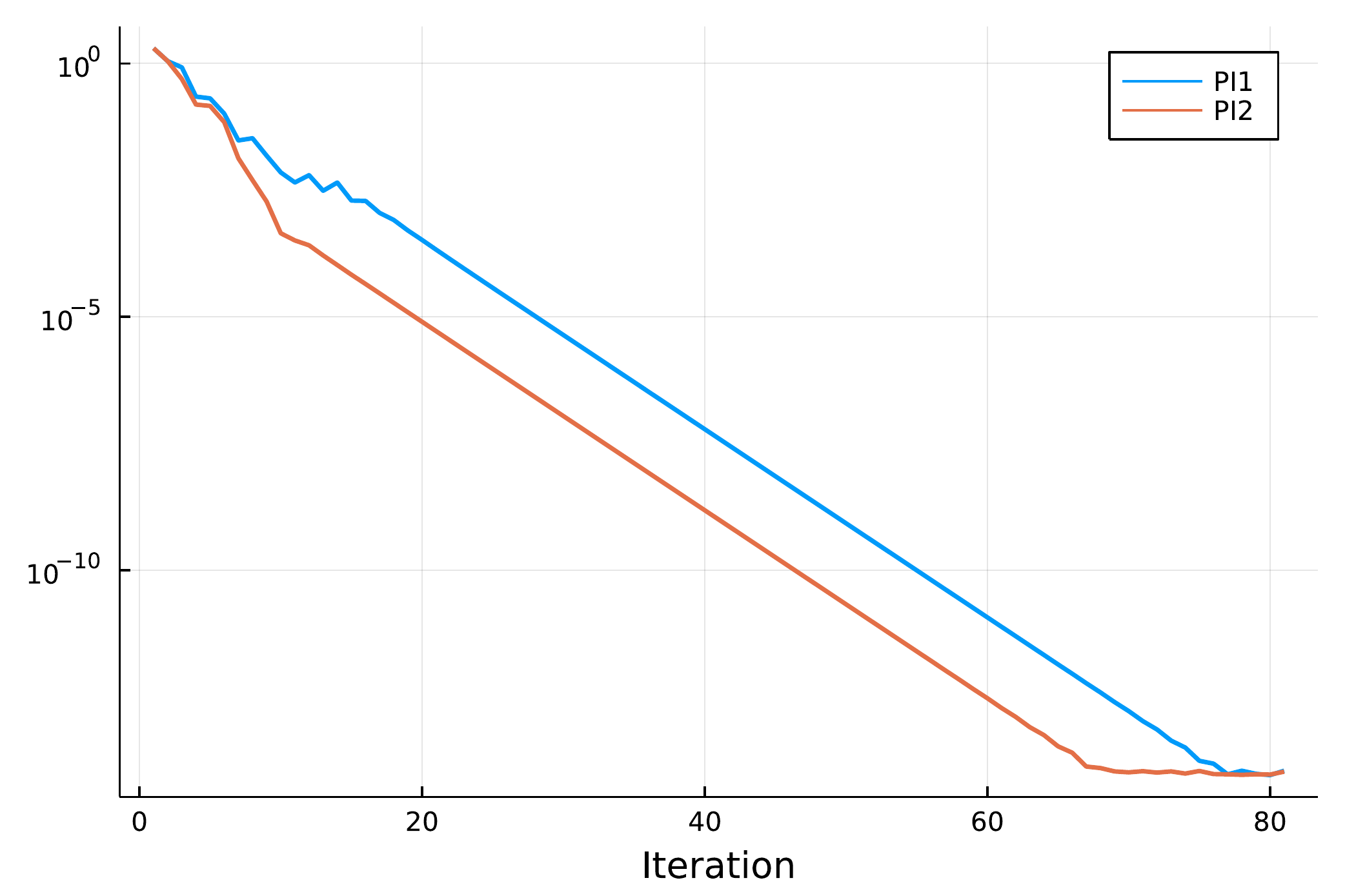} \\
 $\Vert m^{(k)}-m^{*} \Vert$ & $\Vert u^{(k)}-u^{*} \Vert$\\\\
\end{tabular}
\end{center}
\caption{Example 2. The residual of MFG system and the $L^\infty$ distance between ${\mathsf{Q}}^{(k)}, M^{(k)}, U^{(k)}$ from policy iteration (\textbf{(PI1)} or \textbf{(PI2)}) and the final solution ${\mathsf{Q}}^{*}, M^{*}, U^{*}$ from fixed point iteration algorithm.}\label{Fig3}
\end{figure}

\begin{figure}[h!]
\begin{center}
\begin{tabular}{cc}
\includegraphics[width=.4\textwidth]{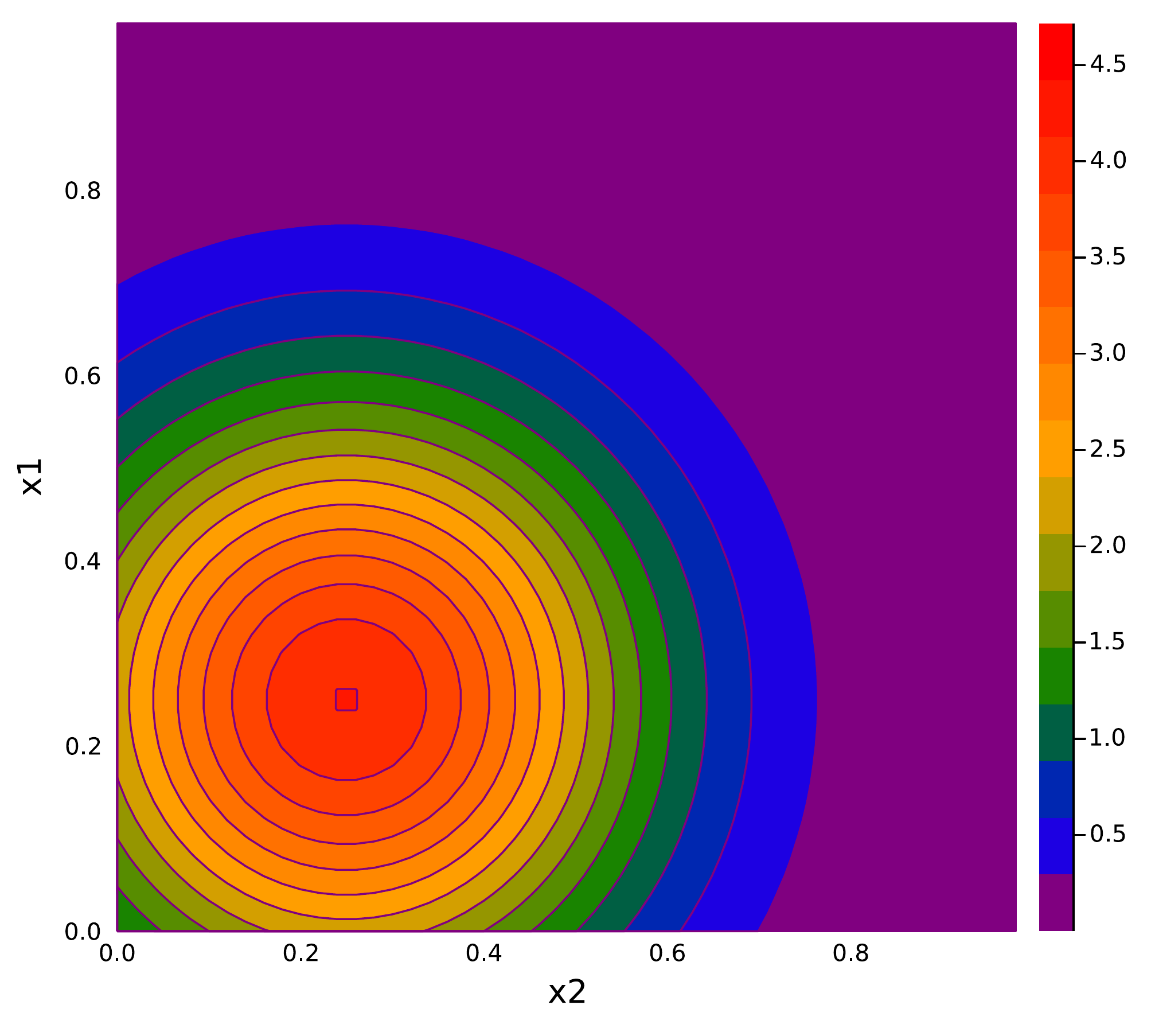} &
\includegraphics[width=.4\textwidth]{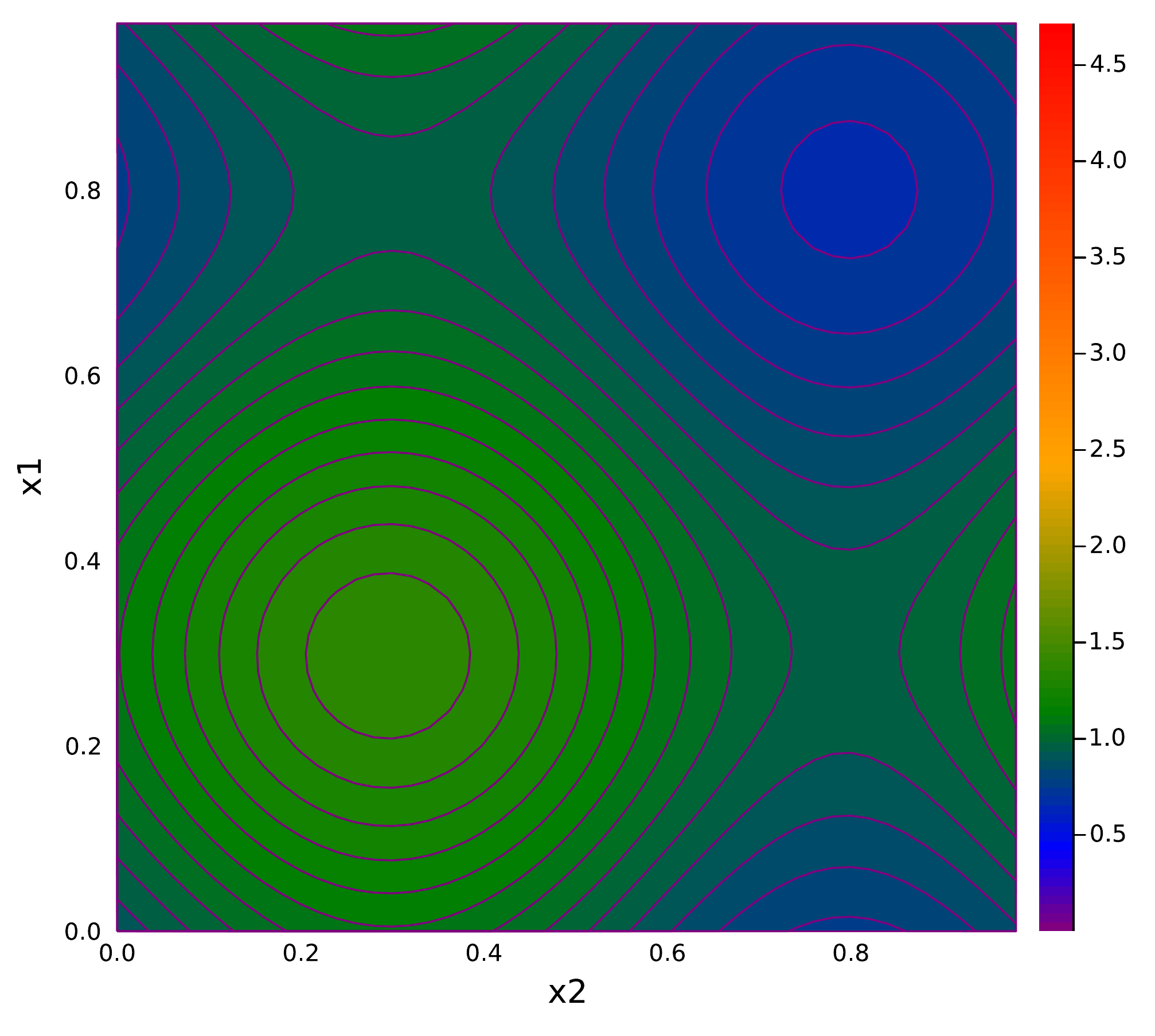} \\
$t=0$ & $t=0.16$ \\\\
\includegraphics[width=.4\textwidth]{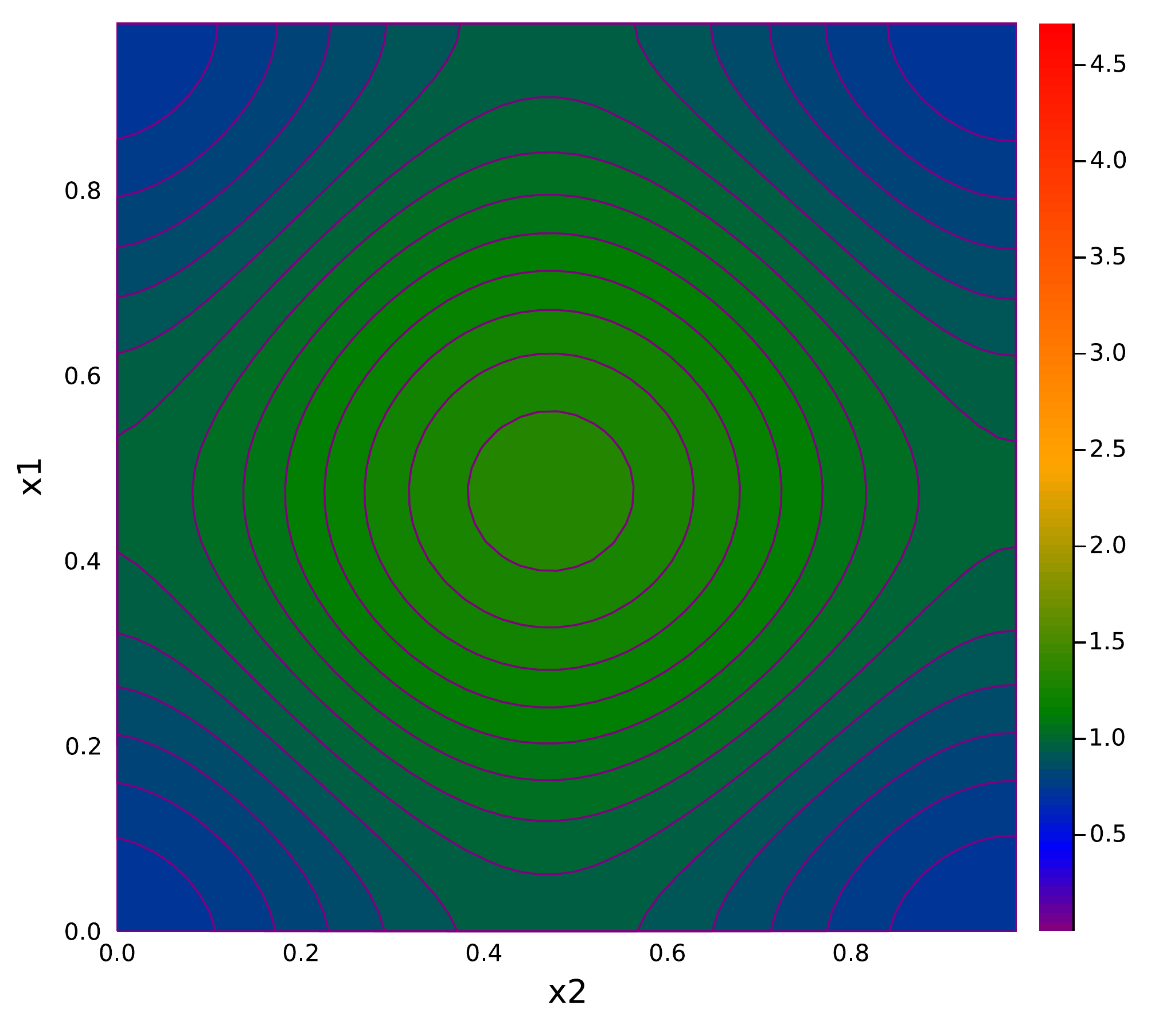} &
\includegraphics[width=.4\textwidth]{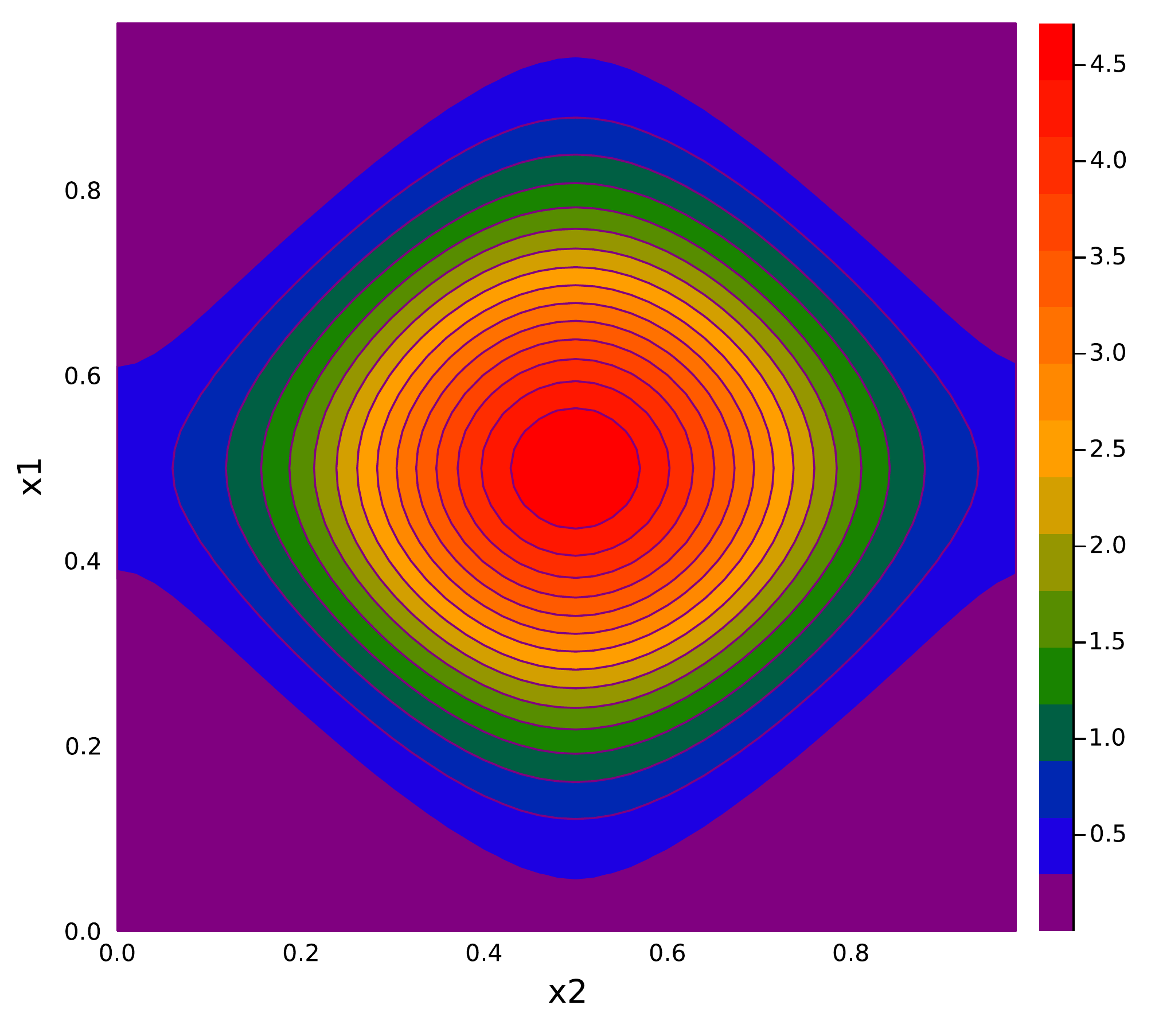} \\
$t=0.33$ & $t=0.5$
\end{tabular}
\end{center}
\caption{Example 2. Solution obtained by \textbf{(PI2)} for the 2d MFG system~\eqref{example2} : contours of density at several time steps.}\label{Fig4}
\end{figure}

Using the same setting as for the policy iteration algorithm, and the initial guess $U^{(0)}_n\equiv 0, M^{(0)}_n\equiv 1$, the fixed point algorithm converges with 27 (outer) iterations. In Figure~\ref{Fig5}, we report the residual of discrete MFG system with the fixed point iteration and \textbf{(PI2)}. For the latter, we count the number of outer iterations. In the fixed point iteration algorithm, at iteration $k$ we have a fixed $M^{(k)}$ and solve the HJB equation using Newton method, hence the HJB residual is very small. We see that the residual for the FP equation is slightly smaller than the one obtained with \textbf{(PI2)}, but it roughly decays at the same rate. 
However, remember that here we are comparing iterations of \textbf{(PI2)} with \textit{outer} iterations of the fixed point method, but each iteration of the latter involves an inner loop for the Newton method. Furthermore there is no clear way to parallelize this inner loop. As a consequence, the fixed point method is overall more expensive from a computational viewpoint. For the sake of illustration, we provide in Table~\ref{table-policyvsdirect} computational times obtained with each method on a computer with Intel(R) Xeon(R) processor running at 2.20GHz. Note that after $60$ iterations, the fixed point method has basically converged and hence the inner loop with Newton method converges much faster than during the first iterations because the initial guess for the non-linear HJB equation is already quite correct.   

\begin{table}[!h]
	\centering
	 \begin{tabular}{| c | c | c | }
	   \hline
		Iterations & \textbf{(PI2)} Total CPU (secs) & Fixed Point Total CPU (secs)\\
		\hline\hline
		10 & 18.82 & 33.51 \\
		\hline
		20 & 37.23 & 55.42 \\
		\hline
		30 & 56.59 & 77.30 \\
		\hline
		60 & 114.43 & 132.41 \\
		\hline
	\end{tabular}
\caption{Policy iteration \textbf{(PI2)} vs Fixed Point iteration total CPU times with different number of iterations. }\label{table-policyvsdirect}
\end{table}

\begin{figure}[h!]
\begin{center}
\begin{tabular}{cc}
\includegraphics[width=.45\textwidth]{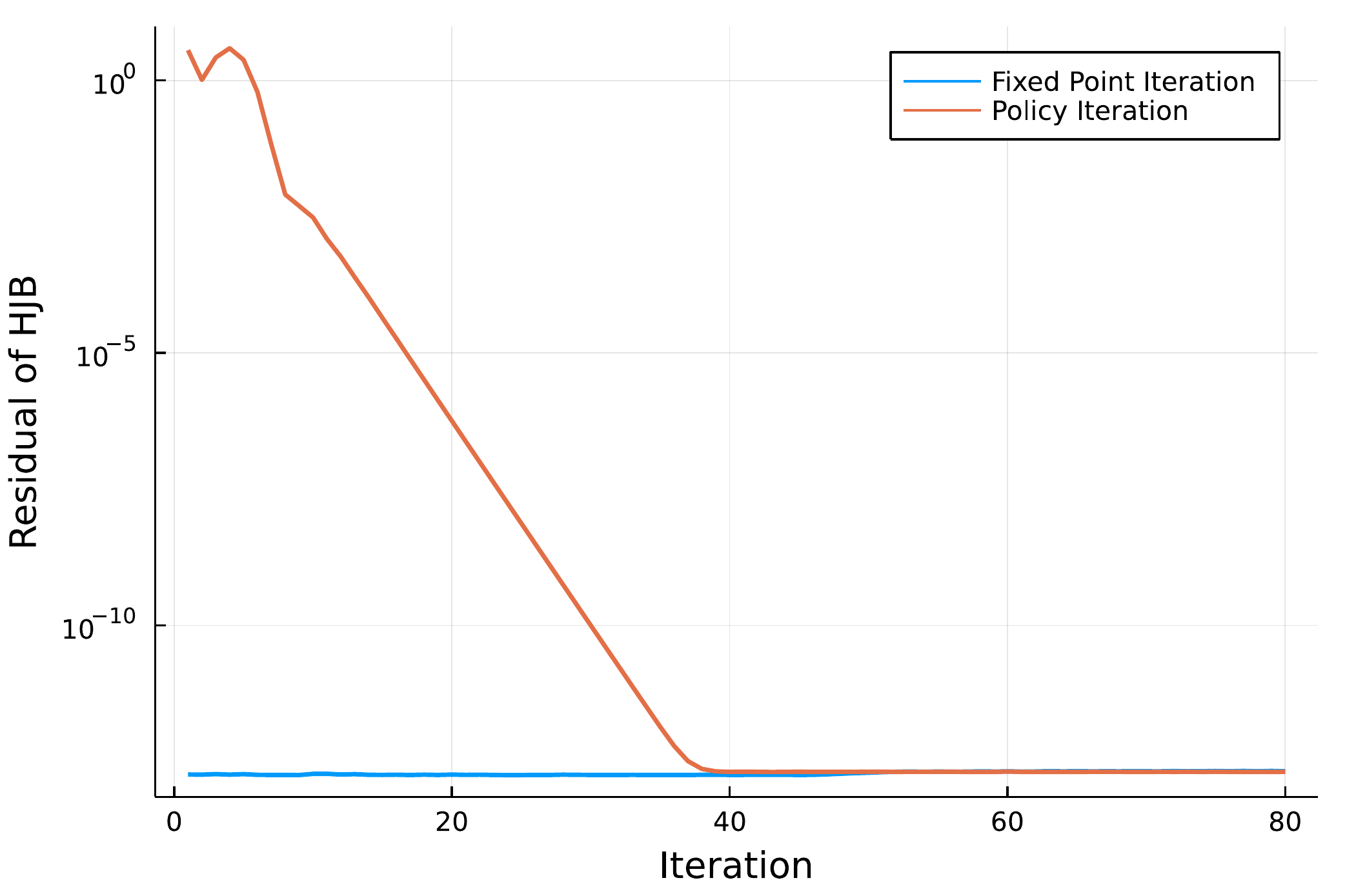} &
\includegraphics[width=.45\textwidth]{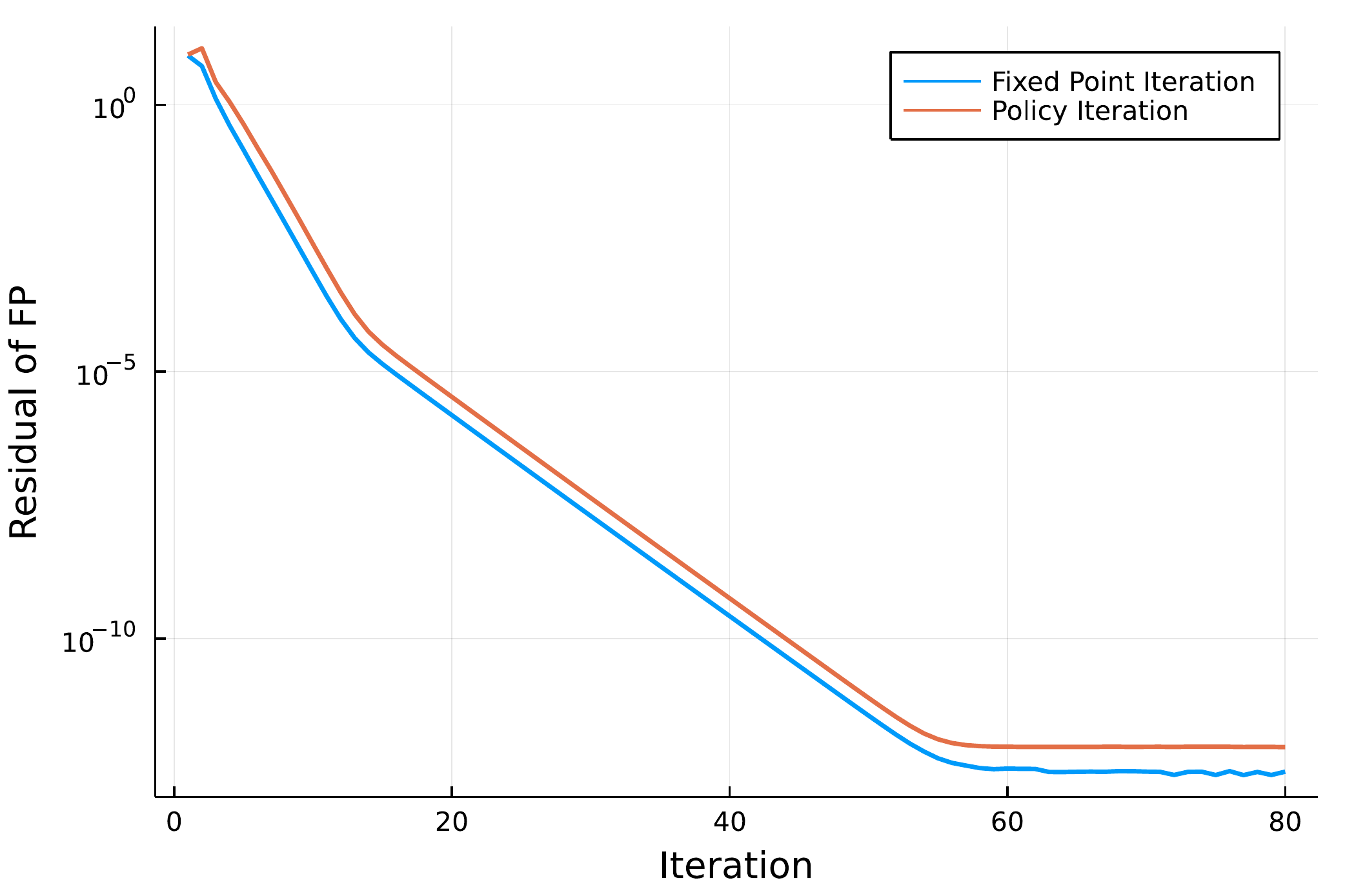} \\
(a)&(b)
\end{tabular}
\end{center}
\caption{Example 2. (a) the residual of HJB equation, (b) the residual of FP equation.}\label{Fig5}
\end{figure}

\vskip 6pt
\noindent\textbf{Example 3:} We conclude with a variant of Example 2, where we take a super-quadratic nonlinearity for the gradient term in the Hamiltonian. Note that our theoretical results also apply to this setting.  
We take the following Hamiltonian:
\begin{equation*}
	H(m,Du) = \max_q\left\{qDu - \frac23 m^{1/4} \vert q\vert ^\frac32\right\}  = \frac{\vert Du\vert ^3}{3m^{1/2}}, 
\end{equation*}
where the maximizer is: $q(x,t)=\frac{\vert Du\vert }{m^{1/2}}Du$ in $Q$. 
The corresponding PDE system is:
\begin{equation}\label{example3}
\begin{cases}
-\partial_tu-0.3{\Delta} u+\frac{1}{3m^{1/2}}\vert Du\vert ^3=0 & \text{ in }Q\\
\partial_tm -0.3{\Delta}  m-{\rm{div}}(\frac{mDu\vert Du\vert }{m^{1/2}})=0 & \text{ in }Q\\
u_T(x_1,x_2)=1.2\cos(2\pi x_1)+\cos(2\pi x_2) & \text{ in }\mathbb{T}^2\\
m_0(x_1,x_2)=C\exp\{-10[(x_1-0.25)^2+(x_2-0.25)^2]\}  &\text{ in }\mathbb{T}^2\\
\end{cases}
\end{equation}
Using the same setting as in Example 2, the policy iteration algorithm \textbf{(PI1)} with 46 iterations leads to $\displaystyle\max_{n,i,j}\vert  M^{(k+1)}_{n,i,j}-M^{(k)}_{n,i,j}\vert $ smaller than $ 10^{-8}$. The contours of density $M_n$ at different time $t_n$ are displayed in Figure~\ref{Fig6}, which is to be compared with Figure~\ref{Fig4}. We see that in the present example, the mass is much more concentrated at the terminal time. This can be explained by the fact that a super-quadratic Hamiltonian corresponds to a lower congestion cost. Hence the agents can move faster and get closer to a desired position.

\begin{figure}[h!]
\begin{center}
\begin{tabular}{cc}
\includegraphics[width=.4\textwidth]{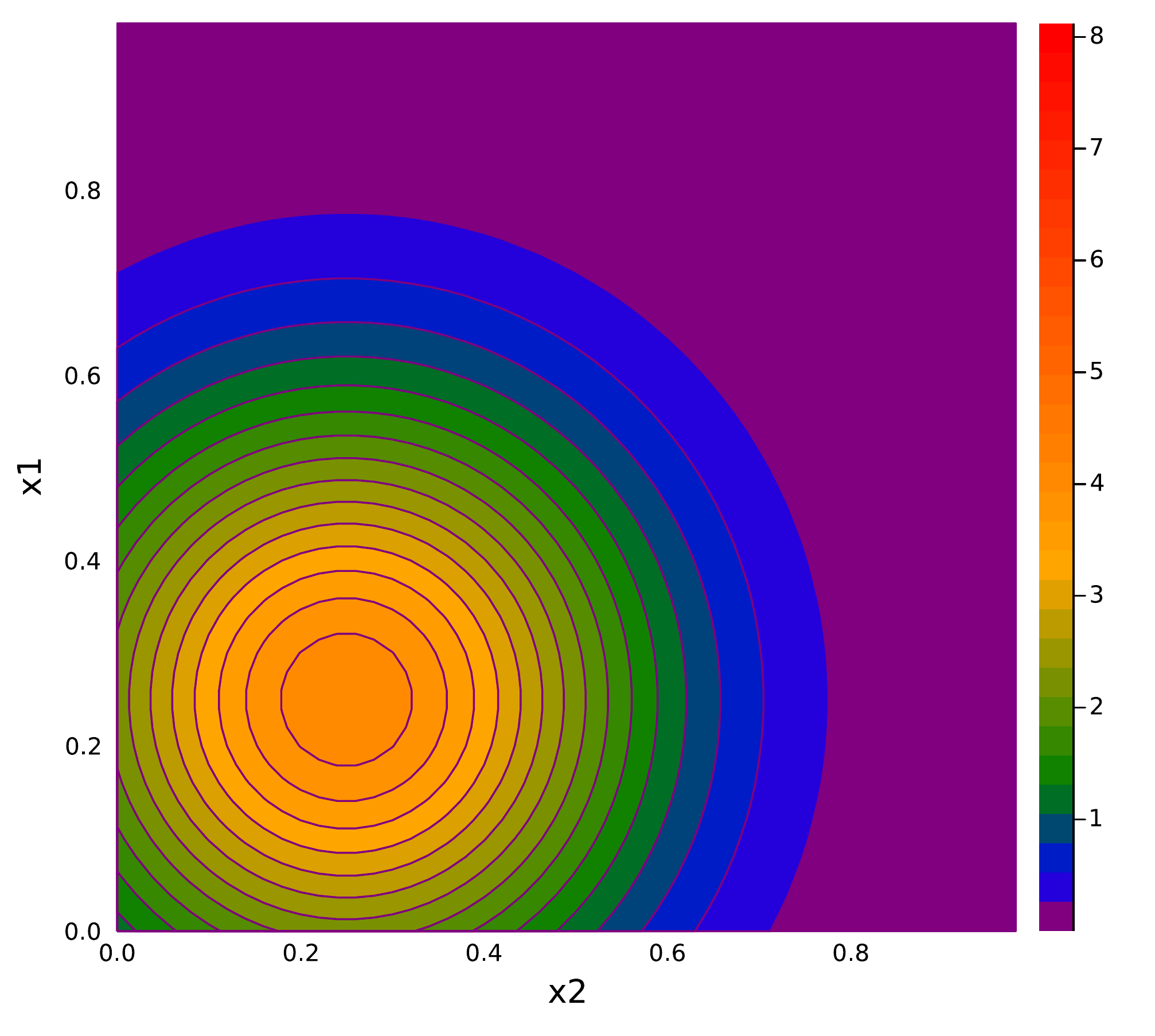} &
\includegraphics[width=.4\textwidth]{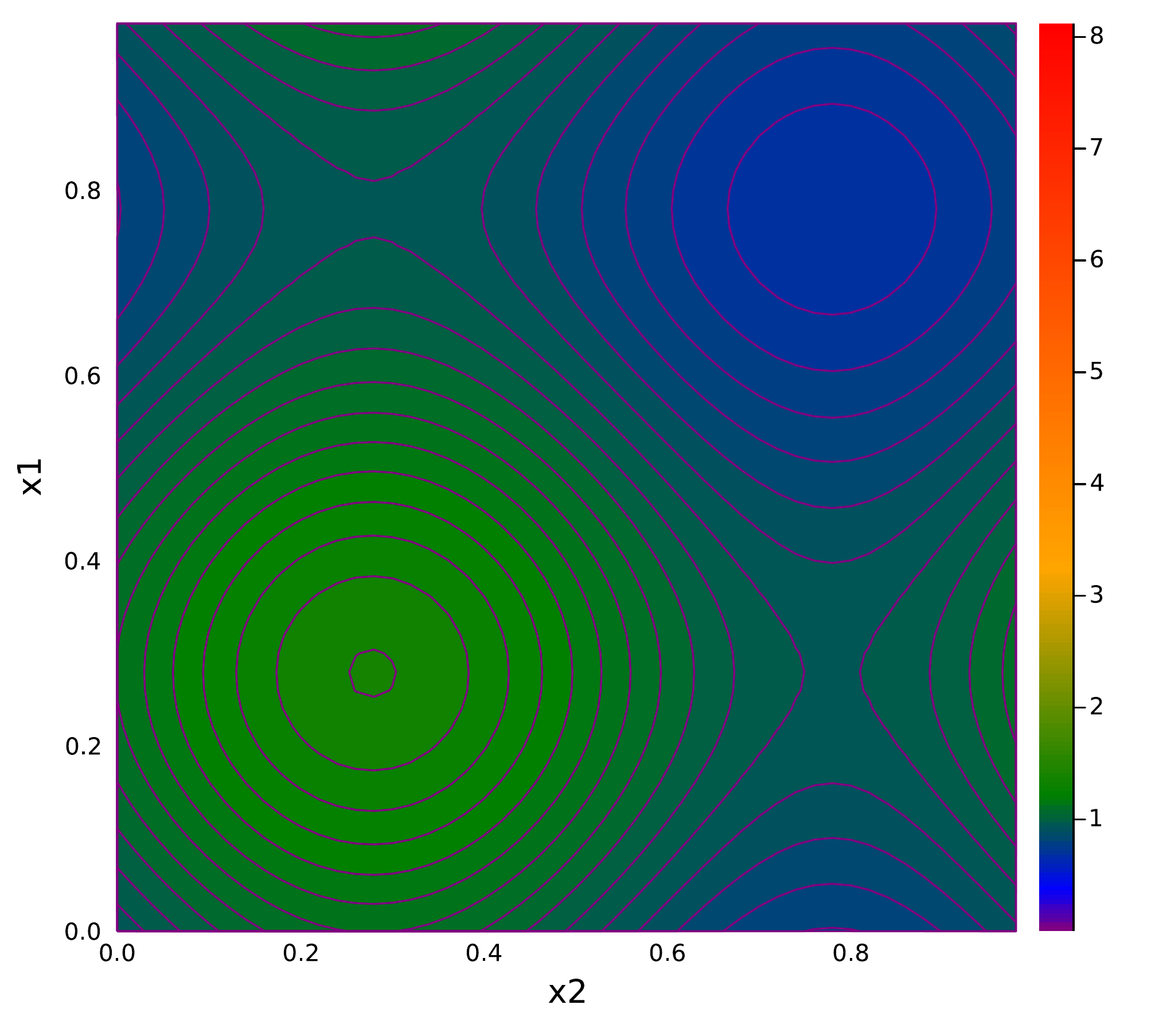} \\
$t=0$ & $t=0.16$ \\\\
\includegraphics[width=.4\textwidth]{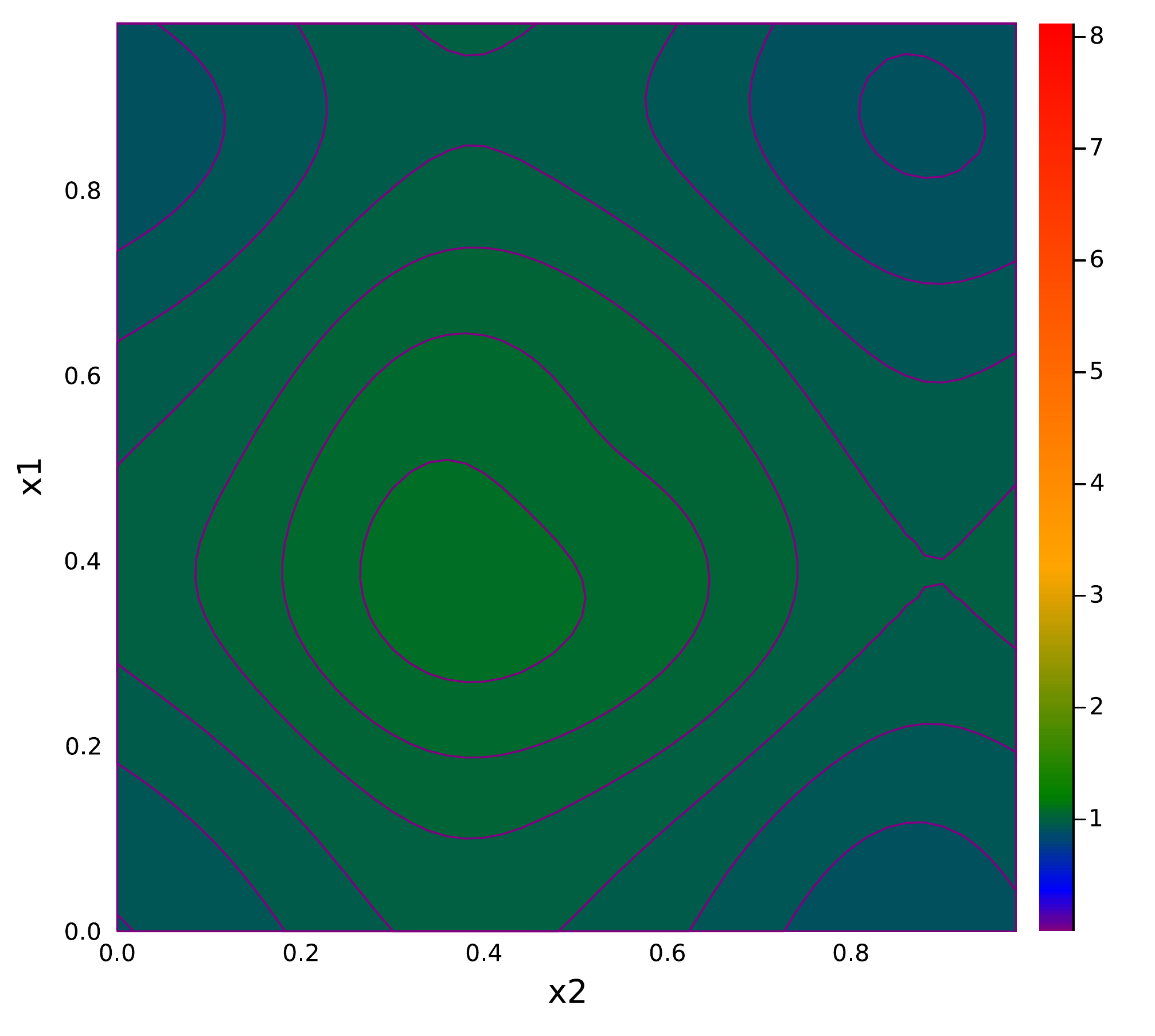} &
\includegraphics[width=.4\textwidth]{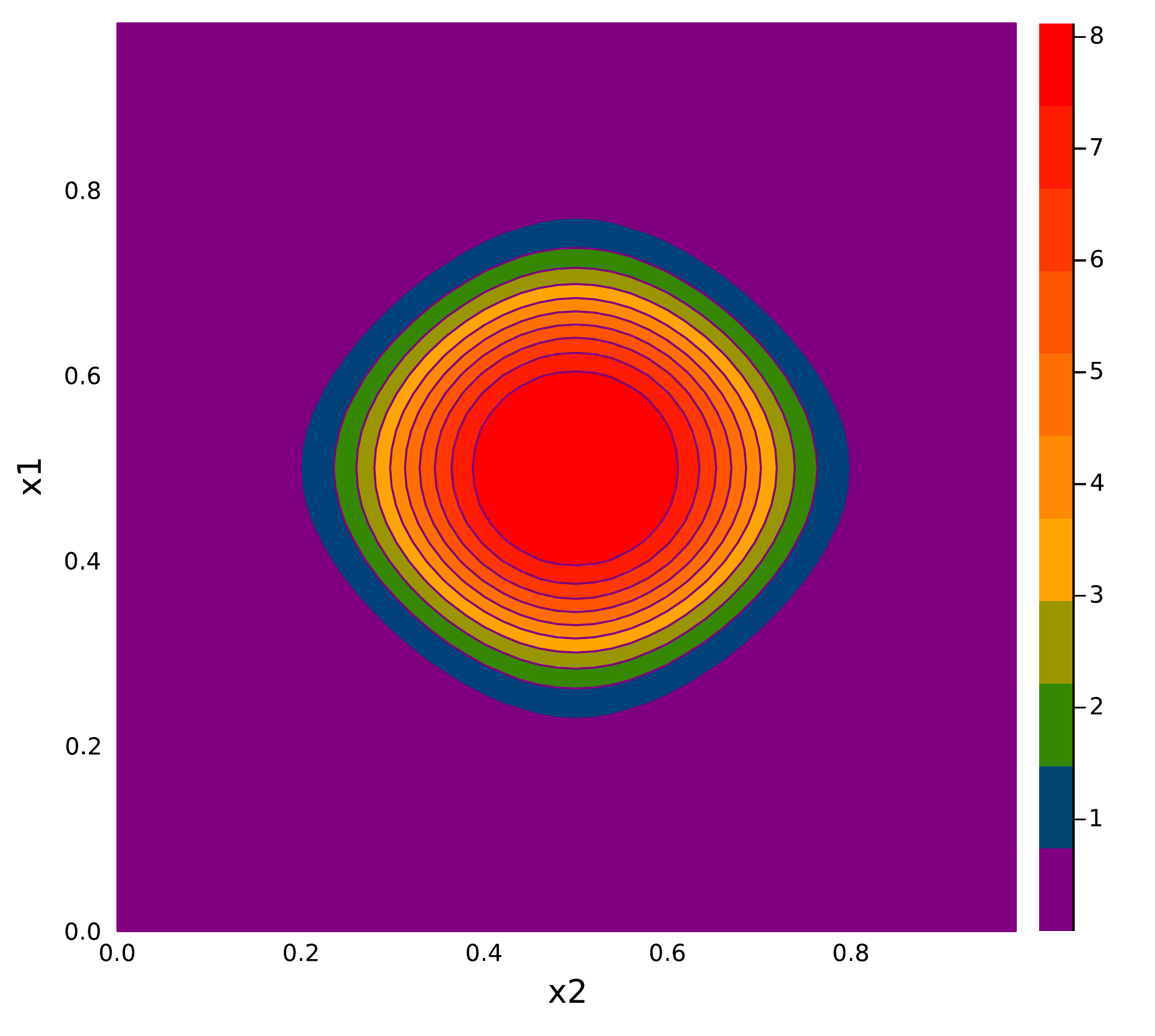} \\
$t=0.33$ & $t=0.5$
\end{tabular}
\end{center}
\caption{Example 3. Solution obtained by \textbf{(PI1)} for the 2d MFG system~\eqref{example3}: contours of density at several time steps.}\label{Fig6}
\end{figure}

\clearpage

\end{document}